\title{Universality laws for Gaussian mixtures in generalized linear models}
\author[1]{Yatin Dandi}
\author[1]{Ludovic Stephan}
\author[1]{Florent Krzakala}
\author[2]{Bruno Loureiro}
\author[3]{Lenka Zdeborová}
\affil[1]{\small \'Ecole Polytechnique F\'ed\'erale de Lausanne (EPFL),
  IdePHICS Lab,   CH-1015 Lausanne, Switzerland}
\affil[2]{\small D\'epartement d'Informatique, \'Ecole Normale Sup\'erieure (ENS) - PSL \& CNRS, 
F-75230 Paris cedex 05, France}
\affil[3]{\small \'Ecole Polytechnique F\'ed\'erale de Lausanne (EPFL),
  SPOC Lab,   CH-1015 Lausanne, Switzerland}
\newcommand{\dR}{\mathbb{R}}
\newcommand{\dE}{\mathbb{E}}
\newcommand{\dS}{\mathbb{S}}
\newcommand{\dP}{\mathbb{P}}
\newcommand{\cA}{\mathcal{A}}
\newcommand{\cB}{\mathcal{B}}
\newcommand{\cC}{\mathcal{C}}
\newcommand{\cK}{\mathcal{K}}
\newcommand{\cL}{\mathcal{L}}
\newcommand{\cN}{\mathcal{N}}
\newcommand{\cR}{\mathcal{R}}
\newcommand{\cS}{\mathcal{S}}
\newcommand{\E}{\mathbb{E}}
\newcommand{\dd}{\mathrm{d}}
\providecommand{\given}{}
\DeclarePairedDelimiterXPP{\Pb}[1]{\mathbb{P}}{\lparen}{\rparen}{}{\renewcommand{\given}{\nonscript{}\:\delimsize\vert\nonscript{}\:\mathopen{}} #1}
\DeclarePairedDelimiterX{\Set}[1]\lbrace\rbrace{\renewcommand{\given}{\nonscript{}\:\delimsize\vert\nonscript{}\:\mathopen{}} #1}
\DeclareMathOperator{\tr}{tr}
\DeclareMathOperator*{\argmin}{arg\,min}
\DeclarePairedDelimiterX{\norm}[1]\lVert\rVert{\ifblank{#1}{\: \cdot \:}{#1}}
\newcommand{\bilingualcommand}[3]{%
	\newcommand{#1}[1][\ ]{%
		##1%
		\iflanguage{english}{\text{#2}}{%
			\iflanguage{french}{\text{#3}}{}%
		}%
		##1%
	}%
}
\bilingualcommand{\where}{where}{où}
\bilingualcommand{\textif}{if}{si}
\bilingualcommand{\textand}{and}{et}
\bilingualcommand{\textiff}{if and only if}{si et seulement si}
\bilingualcommand{\otherwise}{otherwise}{sinon}
\newcommand{\eps}{\varepsilon}
\newcommand{\quand}{\quad \textand{} \quad}
\newcommand{\qquand}{\qquad \textand{} \qquad}
\DeclareMathOperator{\Prox}{Prox}
\newcommand{\bsA}{{\boldsymbol{\mathsf A}}}
\newcommand{\bG}{{\boldsymbol{G}}}
\newcommand{\bg}{{\boldsymbol{g}}}
\newcommand{\bH}{{\boldsymbol{H}}}
\newcommand{\bU}{{\boldsymbol{U}}}
\newcommand{\bh}{{\boldsymbol{h}}}
\newcommand{\bW}{{{\boldsymbol{{W}}}}}
\newcommand{\bw}{{\boldsymbol{w}}}
\newcommand{\bQ}{{\boldsymbol{Q}}}
\newcommand{\bV}{{\boldsymbol{V}}}
\newcommand{\bM}{{\boldsymbol{m}}}
\newcommand{\bhQ}{{\hat{\boldsymbol{Q}}}}
\newcommand{\bhV}{{\hat{\boldsymbol{V}}}}
\newcommand{\bhM}{{\hat{\boldsymbol{m}}}}
\newcommand{\bbb}{{\boldsymbol{b}}}
\newcommand{\bv}{{\boldsymbol{v}}}
\newcommand{\bbB}{{\boldsymbol{B}}}
\newcommand{\bF}{{\boldsymbol{F}}}
\newcommand{\bZ}{{\boldsymbol{Z}}}
\newcommand{\bTheta}{{\boldsymbol{\Theta}}}
\newcommand{\bSigma}{{\boldsymbol{\Sigma}}}
\newcommand{\bomega}{{\boldsymbol{\omega}}}
\newcommand{\bXi}{{\boldsymbol{\Xi}}}
\newcommand{\bmu}{{\boldsymbol{\mu}}}
\newcommand{\bx}{{\boldsymbol{x}}}
\newcommand{\bX}{{\boldsymbol{X}}}
\newcommand{\bI}{{\boldsymbol{I}}}
\newcommand{\by}{{\boldsymbol{y}}}
\newcommand{\bff}{{\boldsymbol{f}}}
\newcommand{\be}{{\boldsymbol{e}}}
\newcommand{\bxi}{{\boldsymbol{\xi}}}
\newcommand{\bu}{{\boldsymbol{u}}}
\DeclareMathOperator{\polylog}{polylog}
\newtheorem*{rep@theorem}{\rep@title}
\newcommand{\newreptheorem}[2]{%
\newenvironment{rep#1}[1]{%
 \def\rep@title{#2 \ref{##1}}%
 \begin{rep@theorem}}%
 {\end{rep@theorem}}}
\newtheorem{theorem}{Theorem}
\newtheorem{assump}{Assumption}
\newtheorem{lemma}[theorem]{Lemma}
\newtheorem{proposition}[theorem]{Proposition}
\newtheorem{corollary}[theorem]{Corollary}
\newtheorem{definition}[theorem]{Definition}
\providecommand{\bB}{\mathbf{B}}
\providecommand{\bI}{\mathbf{I}}
\providecommand{\xx}{\mathbf{x}}
\providecommand{\bmu}{\boldsymbol \mu}
\providecommand{\bSigma}{\boldsymbol \Sigma}
\providecommand{\btheta}{\boldsymbol{\theta}}
\providecommand{\yy}{\mathbf{y}}
\providecommand{\zz}{\mathbf{z}}
\providecommand{\gg}{\mathbf{g}}
\newcommand{\R}{\mathbb{R}}
\def\hR{\widehat{\mathcal{R}}}
\newcommand{\Ea}[1]{\E\left[#1\right]}
\newcommand{\Eb}[2]{\E_{#1}\left[#2\right]}
\DeclarePairedDelimiterX{\rbr}[1]{(}{)}{#1} 
\DeclarePairedDelimiterX{\sbr}[1]{[}{]}{#1}
\def\bv{{\boldsymbol v}}
\def\bd{{\boldsymbol d}}
\def\bQ{{\boldsymbol Q}}
\def\bZ{{\boldsymbol Z}}
\def\bM{{\boldsymbol M}}
\def\bB{{\boldsymbol B}}
\def\bh{{\boldsymbol h}}
\def\bF{{\boldsymbol F}}
\def\bH{{\boldsymbol H}}
\def\bI{{\boldsymbol I}}
\def\bX{{\boldsymbol X}}
\def\bw{{\boldsymbol w}}
\def\bv{{\boldsymbol v}}
\def\bx{{\boldsymbol x}}
\def\by{{\boldsymbol y}}
\def\bW{{\boldsymbol W}}
\def\bV{{\boldsymbol V}}
\def\bv{{\boldsymbol v}}
\def\bU{{\boldsymbol U}}
\def\bV{{\boldsymbol V}}
\def\bz{{\boldsymbol z}}
\def\bxi{{\boldsymbol \xi}}
\newcommand{\lin}[1]{\ensuremath{\left\langle #1 \right\rangle}}
\newcommand{\abs}[1]{\left\lvert#1\right\rvert}
\def\mat#1{\mathrm{#1}}
\begin{document}

\maketitle

\begin{abstract}%
Let $(\bx_{i}, y_{i})_{i=1,\dots,n}$ denote independent samples from a general mixture distribution $\sum_{c\in\mathcal{C}}\rho_{c}P_{c}^{\bx}$, and consider the hypothesis class of generalized linear models $\hat{y} = F(\bTheta^{\top}\bx)$. In this work, we investigate the asymptotic joint statistics of the family of generalized linear estimators $(\bTheta_{1}, \dots, \bTheta_{M})$ obtained either from (a) minimizing an empirical risk $\hat{R}_{n}(\bTheta;\bX,\by)$ or (b) sampling from the associated Gibbs measure $\exp(-\beta n \hat{R}_{n}(\bTheta;\bX,\by))$. Our main contribution is to characterize under which conditions the asymptotic joint statistics of this family depends (on a weak sense) only on the means and covariances of the class conditional features distribution $P_{c}^{\bx}$. In particular, this allow us to prove the universality of different quantities of interest, such as the training and generalization errors, redeeming a recent line of work in high-dimensional statistics working under the Gaussian mixture hypothesis. Finally, we discuss the applications of our results to different machine learning tasks of interest, such as ensembling and uncertainty quantification. 
\end{abstract}

\section{Introduction}
A recurrent topic in high-dimensional statistics is the investigation of the typical properties of signal processing and machine learning methods on synthetic, {\it i.i.d.} Gaussian  data, a scenario often known under the umbrella of \emph{Gaussian design} \citep{donoho2009observed, 5730603, Monajemi2013, candes2020phase, Bartlett30063}.  A less restrictive assumption arises when considering that many  machine learning tasks deal with data partitioned into a fixed number of classes. In these cases, the data distribution is naturally described by a \emph{mixture model}, where each sample is generated \emph{conditionally} on the class. In other words: data is generated by first sampling the class assignment and \emph{then} generating the input conditioned on the class. Arguably the simplest example of such distributions is that of a {\it Gaussian mixture}, which shall be our focus in this work. 

Gaussian mixtures are a popular model in high-dimensional statistics since, besides being an universal approximator, they often lead to mathematically tractable problems. Indeed, a recent line of work has analyzed the asymptotic performance of a large class of machine learning problems in the proportional high-dimensional limit under the Gaussian mixture data assumption, see e.g. 
\cite{mai2019high,mignacco2020role,taheri2020optimality,kini2021phase,wang2021benign,refinetti2021classifying,loureiro2021learning_gm}. The key goal of the present work is to show that this assumption, and hence the conclusions derived therein, are far more general than previously anticipated.

We build on a recent line of works  \citep{goldt2020gaussian,hu2020universality,montanari2022universality} that have proven the asymptotic (single) Gaussian equivalence of generalized linear estimation on non-linear feature maps satisfying certain regularities conditions, a topic that has started with the work of \cite{el2010spectrum} on kernel matrices. Furthermore, there is strong empirical evidence that Gaussian universality holds in a more general sense \citep{loureiro2021learning}. A crucial limitation of the results in \cite{hu2020universality,montanari2022universality}, however, is the assumption of a target function depending on linear projections in the latent or feature space. Instead, we consider a rich class of mixture distributions, allowing arbitrary dependence between the class labels and the data.

Here, we extend this line of works and provide rigorous justification for universality in various settings such as empirical risk minimization (ERM), sampling, ensembling, etc. for {\it general mixture} distributions. Namely, we shall show that the statistics of generalized estimators obtained either from ERM or sampling on a mixture model asymptotically agrees (in a weak sense) with the statistics of estimators from the same class trained on a Gaussian mixture model with matching first and second order moments. In particular, this implies the universality of different quantities of interest, such as the training and generalization errors.




Our {\bf main contributions} are as follows:
\begin{itemize}[wide=1pt, topsep=0pt,nosep]
    \item We extend the Gaussian universality of empirical risk minimization theorems in \cite{goldt2020gaussian,hu2020universality,montanari2022universality} to generic mixture distribution and an equivalent mixture of Gaussians. In particular, we show that a Gaussian mixture observed through a random feature map is also a Gaussian mixture in the high-dimensional limit, a fact used for instance (without rigorous justification) in \cite{refinetti2021classifying,loureiro2021learning_gm}.
    \item A consequence of our results is that, with conditions on the matrix weights, data generated by conditional Generative Adversarial Networks (cGAN) behave as a Gaussian mixture when observed through the prism of generalized linear models (kernels, feature maps, etc...), as illustrated in Figs \ref{fig:cartoon_theorem1} and \ref{fig:experiments}. This further generalizes the work of \cite{seddik_2020_random} that only considered the universality of Gram matrices for GAN generated data through the prism of random matrix theory. 
    \item We consider setups involving multiple sets of parameters arising from simultaneous minimization of different objectives as well as sampling from Gibbs distributions defined by the empirical risk. This provides a unified framework for establishing the asymptotic universality of arbitrary functions of the set of minimizers or samples from different Gibbs distributions. For instance, it includes ensembling \citep{loureiro2022fluc}) and uncertainty quantification \citep{Clarte2022a,Clarte2022b} settings.    
    \item We finally show how, in common setups, universality holds for a large class of functions, leading to the equivalence between the distributions of the minimizers themselves, and provide a theorem for their weak convergence.
\end{itemize}

\paragraph{Related work ---} Universality is an important topic in applied mathematics, as it motivates the scope of tractable mathematical models. It has been extensively studied in the context of random matrix theory \citep{Tao2011, Tao2012}, signal processing problems \citep{donoho2009observed, 5730603, Monajemi2013, NIPS2017_136f9513, 8006947, NEURIPS2019_dffbb6ef, Abbara_2020, Dudeja2022} and kernel methods \citep{el2010spectrum, Lu2022, Misiakiewicz2022}. Closer to us is the recent stream of works that investigated the Gaussian universality of the asymptotic error of generalized linear models trained on non-linear features, starting from single-layer random feature maps \citep{montanari2019generalization, gerace2020generalisation, hu2020universality, dhifallah2021} and later extended to single-layer NTK \citep{montanari2022universality} and deep random features \citep{Schroder2023}. These results, together with numerical observations that Gaussian universality holds for more general classes of features, led to the formulation of different \emph{Gaussian equivalence} conjectures \citep{goldt2019modelling, goldt2020gaussian, loureiro2021learning}. A complementary line of research has investigated cases in which the distribution of the features is multi-modal, suggesting a Gaussian mixture universality class instead \citep{louart2018random, seddik_2020_random, seddik_2021_unexpected}. A bridge between these two lines of work has been recently investigated for generalized linear estimation with random labels in \cite{gerace2022}. 

\section{Setting and motivation}
\label{sec:set}
Consider a supervised learning problem where the training data $(\bm x_{i}, y_{i})\in\mathbb{R}^{p}\times \mathcal{Y}$, $i\in [n]\coloneqq \{1,\cdots, n\}$ is independently drawn from a mixture distribution:
\begin{align}
\label{def:mixture}
    {\bm x}_{i} &\sim \sum_{c\in\mathcal{C}} \rho_{c}P^{\bm x}_{c}, &&\mathbb{P}(c_{i}=c) = \rho_c,
\end{align}
with $c_{i}$ a categorical random variable denoting the cluster assignment for the $i_{th}$ example $\xx_i$. Let $\bmu_c$, $\bSigma_c$ denote the mean and covariance of $P_{c}^{\bm{x}}$, and $k=|\mathcal{C}|$.
Further, assume that the labels $y_{i}$ are generated from the following target function:
\begin{align}
\label{eq:def:teacher}
y_{i}(\bX) = \eta(\bm \Theta_{\star}^{\top} \bm x_i, \eps_i, c_i),
\end{align}
where $\bm\Theta_{\star}\in \mathbb{R}^{k\times p}$ and $\eps_i$ is an i.i.d source of randomness. It is important to stress that the class labels \eqref{eq:def:teacher} are themselves not constrained to arise from a simple function of the inputs $\bx_{i}$. For instance, the functional form in \eqref{eq:def:teacher} includes the case where the labels are exclusively given by a function of the mixture index $y_{i}=\eta(c_{i})$. This will allow us to handle complex targets, such as data generated using conditional Generative Adversarial Networks (cGANs).

In this manuscript, we will be interested in the hypothesis class defined by the following parametric predictor $\hat{y} = F(\bTheta^{\top}\bx)$, where $\bTheta\in\mathbb{R}^{k\times p}$ are the parameters and  $F:\mathbb{R}^{k}\to \mathcal{Y}$ an activation function. For a given loss function $\ell:\mathbb{R}^{k}\times \mathcal{Y}\to \mathbb{R}_{+}$ and regularization term $r:\mathbb{R}^{k\times p}\to\mathbb{R}_{+}$, define the (regularized) empirical risk over the training data:
\begin{equation}
\label{eq:def:emprisk}
    \hR_n(\bTheta;\bX,\by) := \frac{1}{n}\sum_{i=1}^n
\ell(\bTheta^{\top}\bx_{i},y_{i})+r(\bTheta),
\end{equation}
\noindent where we have defined the feature matrix $\bX\in\mathbb{R}^{p\times n}$ by stacking the features $\bx_{i}$ column-wise and the labels $y_{i}$ in a vector $\by\in\mathcal{Y}^{n}$. In what follows, we will be interested in the following two tasks:
\begin{enumerate}[wide=1pt, topsep=0pt,nosep]
    \item Minimization: in a minimization task, the statistician's goal is to find a good predictor by minimizing the empirical risk \eqref{eq:def:emprisk}, possibly over a constraint set $\mathcal{S}_{p}$:
\begin{equation}
\label{def:min_problem_committee}
    \hat\bTheta_{\text{erm}}(\bX, \by) \in \underset{\bTheta\in\mathcal{S}_{p}}{\argmin}~\hR_n(\bTheta;\bX,\by),
\end{equation}
This encompasses diverse settings such as generalized linear models with noise, mixture classification, but also the random label setting (with $\eta(x, \eps) = \eps$). In the following, we denote {$\hR_{n}^{\star}(\bm X, \bm y)\coloneqq \min_{\bTheta} \hR_{n}(\bTheta;\bm X, \bm y)$}

    \item Sampling: here, instead of minimizing the empirical risk \eqref{eq:def:emprisk}, the statistician's goal is to sample from a Gibbs distribution that weights different hypothesis according to their empirical error:
    \begin{equation}
    \label{eq:def:gibbs}
        \bTheta_{\text{Bayes}}(\bX,\by) \sim P_{\text{Bayes}}(\bTheta) \propto \exp\left(-\beta n\hR_n(\bTheta;\bX,\by)\right) \dd\mu(\bTheta)
    \end{equation}
    where $\mu$ is reference prior measure and $\beta>0$ is a parameter known as the \emph{inverse temperature}. Note that minimization can be seen as a particular example of sampling when $\beta\to\infty$, since in this limit the above measure peaks on the global minima of \eqref{def:min_problem_committee}.  
\end{enumerate}
\paragraph{Applications of interest---}
So far, the setting defined above is quite generic, and the motivation to study this problem might not appear evident to the reader. Therefore, we briefly discuss a few scenarios of interest which are covered this model.
\begin{enumerate}[wide=1pt, topsep=0pt,nosep]
    \item \textit{Conditional GANs (cGANs):} were introduced by \cite{Mirza2014} as a generative model to learn mixture distributions. Once trained in samples from the target distribution, they define a function $\Psi$ that maps Gaussian mixtures (defining the latent space) to samples from the target mixture that preserve the label structure. In other words, conditioned on the label:
    \begin{align}
        \forall c\in\mathcal{C}, \qquad \bz\sim\mathcal{N}(\bmu_{c}, \bSigma_{c})\mapsto \bx_{c}=\Psi(\bz, c)\sim P_{c}^{\bx}
    \end{align}
    The connection to model \eqref{def:mixture} is immediate. This scenario was extensively studied by \cite{louart2018random, seddik_2020_random, seddik_2021_unexpected}, and is illustrated in Fig.~\ref{fig:cartoon_theorem1}. In Fig.~\ref{fig:experiments} we report on a concrete experiment with a cGAN trained on the fashion-MNIST dataset. 

    \item\textit{Multiple objectives:} Our framework also allows to characterize the joint statistics of estimators $(\bTheta_{1},\dots, \bTheta_{M})$ obtained from empirical risk minimization and/or sampling from different objective functions $\hat{R}^{m}_{n}$ defined on the same training data $(\bX, \by)$. This can be of interest in different scenarios. For instance, \cite{Clarte2022a, Clarte2022b} has characterized the correlation in the calibration of different uncertainty measures of interest, e.g. last-layer scores and Bayesian training of last-layer weights. This crucially depends on the correlation matrix $\hat{\bTheta}_{\text{erm}}\bTheta_{\text{Bayes}}^{\top}\in\mathbb{R}^{k\times k}$ which fits our framework.
    
    \item \textit{Ensemble of features:} Another example covered by the multi-objective framework above is that of ensembling. Let $(\bz_{i}, y_{i})\in\mathbb{R}^{d}\times \mathcal{Y}$ denote some training data from a mixture model akin to \eqref{def:mixture}. A popular  ensembling scheme often employed in the context of deep learning \citep{NIPS2017_9ef2ed4b} is to take a family of $M$  feature maps $\bz_{i} \mapsto \bx^{(m)}_{i} \!=\!\varphi_{m}(\bz_{i})$ (e.g.  neural network features trained from different random initialization) and train $M$ independent learners:
    \begin{align}
        \hat{\bm{\Theta}}_{\text{erm}}^{(m)} \in \underset{\bm{\bTheta}\in\mathcal{S}_{p}}{\argmin}~\frac{1}{n}\sum\limits_{i=1}^{n}\ell(\bm{\Theta}^{\top}\bx^{(m)}_{i}, y_{i}) + r(\bm{\bTheta})
    \end{align}
    Prediction on a new sample $\bz$ is then made by assembling the independent learners, e.g. by taking their average $\hat{\by} = \sfrac{1}{M}\sum_{m=1}^{M}{\hat{\bm{\Theta}}_{\text{erm}}^{(m)\top}}\varphi_{m}(\bz)$. A closely related model was studied in \cite{Geiger_2020, d2020double, loureiro2022fluc}.
\end{enumerate}

Note that in all the applications above, having the labels depending on the features $\bX$ would not be natural, since they are either generated from a latent space, as in $(i)$, or chosen by the statistician, as in $(ii), (iii)$. Indeed, in these cases the most natural label model is given by the mixture index $y=c$ itself, which is a particular case of \eqref{eq:def:teacher}. This highlights the flexibility of the our target model with respect to prior work \citep{montanari2022universality}. Instead, \cite{hu2020universality} assumes that the target is a function of a \emph{latent variable}, which would correspond to a mismatched setting. The discussion here could be generalized also to this case, but would require an additional assumption, which we discuss in Appendix \ref{sec:app:target}.  

\paragraph{Universality ---} Given these tasks, the goal of the statistician is to characterize different statistical properties of these predictors. These can be, for instance, point performance metrics such as the empirical and population risks, or uncertainty metrics such as the calibration of the predictor or moments of the posterior distribution \eqref{eq:def:gibbs}. These examples, as well as many different other quantities of interest, are functions of the joint statistics of the pre-activations $(\bTheta_{\star}^{\top}\bx, \bTheta^{\top}\bx)$, for $\bx$ either a test or training sample from \eqref{def:mixture}. For instance, in a Gaussian mixture model, where $\bx\sim \sum_{c\in\mathcal{C}}\rho_{c}\, \mathcal{N}(\bmu_{c},\bSigma_{c})$, the sufficient statistics are simply given by the first two moments of these pre-activations. However, for a general mixture model \eqref{def:mixture}, the sufficient statistics will generically depend on all moments of these pre-activations. Surprisingly, our key result in this work is to show that in the high-dimensional limit this is not the case. In other words, under some conditions which are made precise in Section \ref{sec:mainres}, we show that expectations with respect to \eqref{def:mixture} can be exchanged by expectations over a Gaussian mixture with matching moments. This can be formalized as follows. Define an \emph{equivalent Gaussian data set} $(\bg_{i}, y_{i})_{i=1}^{n}\in\mathbb{R}^{p}\times \mathcal{Y}$ with samples independently drawn from the \emph{equivalent Gaussian mixture model}:
\begin{align}
\label{eq:gaussian_equivalent_model}
\bm g_i \sim \sum_{c\in\mathcal{C}} \rho_{c}\, \cN(\bm \mu_c, \bm \Sigma_c), && y_{i}(\bG) = \eta(\bm \Theta_{\star}^{\top} \bm g_i, \eps_i, c_i).
\end{align}
We recall that $\bmu_c$, $\bSigma_c$ denotes the mean and covariance of $P_{c}^{\bm{x}}$ from \eqref{def:mixture}. Consider a family of estimators $\left(\bTheta_{1},\cdots,\bTheta_{M}\right)$ defined by minimization \eqref{eq:def:emprisk} and/or sampling \eqref{eq:def:gibbs} over the training data $(\bX, \by)$ from the mixture model \eqref{def:mixture}. Let $h$ be a statistical metric of interest. Then, in the proportional high-dimensional limit where $n,p\!\to\!\infty$ at fixed $k,k, M\in\mathbb{Z}_{+}$  sample complexity $\alpha\!=\!\sfrac{n}{d}>0$, and
where $\langle\cdot\rangle$ denote the expectation with respect to the Gibbs distribution \eqref{eq:def:gibbs}, we define universality as:
\begin{align}
    \mathbb{E}_{\bX}\left[\langle h(\bTheta_{1},\cdots,\bTheta_{M}) \rangle_{\bX}\right] \underset{n\to\infty}{\simeq} \mathbb{E}_{\bG}\left[\langle h(\bTheta_{1},\cdots,\bTheta_{M}) \rangle_{\bG}\right]
\end{align}
\noindent 
The goal of the next section is to make this statement precise.

\begin{figure}[t!]
\begin{center}
\centerline{\includegraphics[width=400pt]{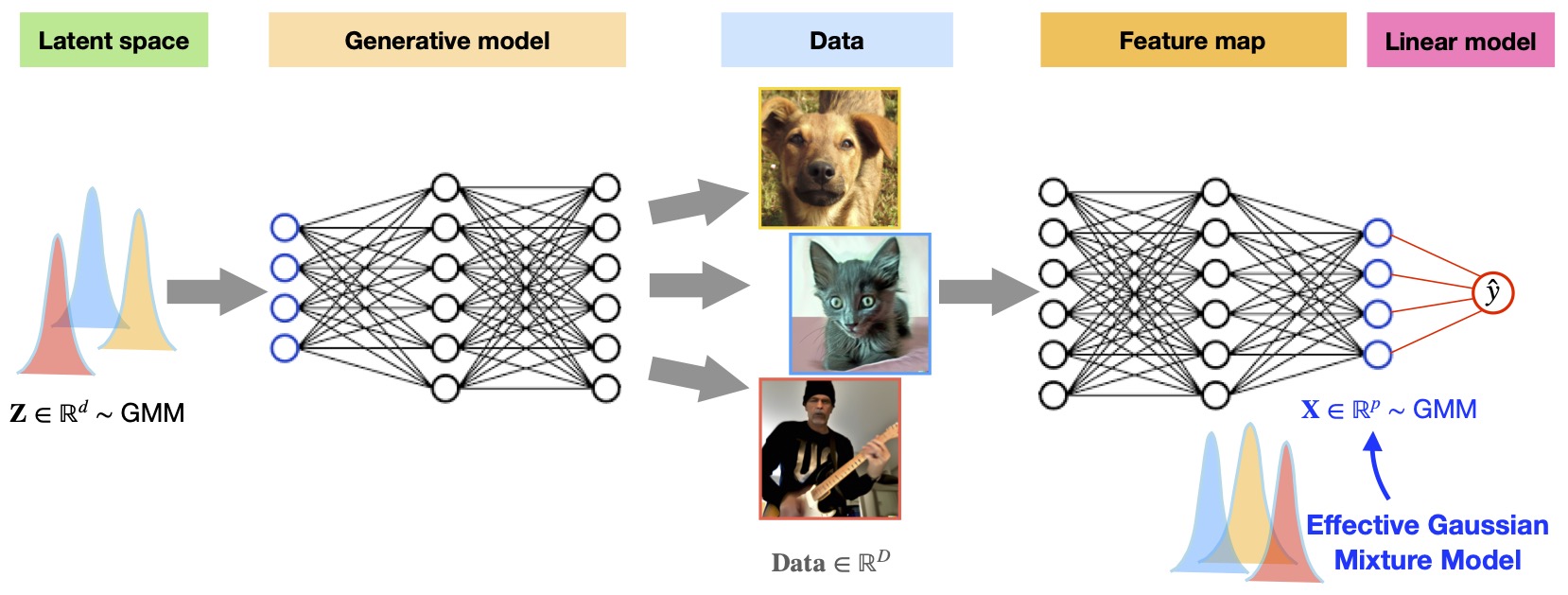}}
\caption{Illustration of Corollary \ref{thm:main_gaussian_universality}: high-dimensional data generated by  generative neural networks starting from a mixture of Gaussian in latent space (${\bf z} \in {\mathbb R}^H$) are (with conditions on the weights matrices) equivalent, in high-dimension and for generalized linear models, to data sampled from a Gaussian mixture. A  concrete example is shown in Fig.
    \ref{fig:experiments}.}
\label{fig:cartoon_theorem1}
\end{center}
\vspace{-1cm}
\end{figure}
\begin{figure}[t]
    \centering
    \includegraphics[width=0.45\textwidth]{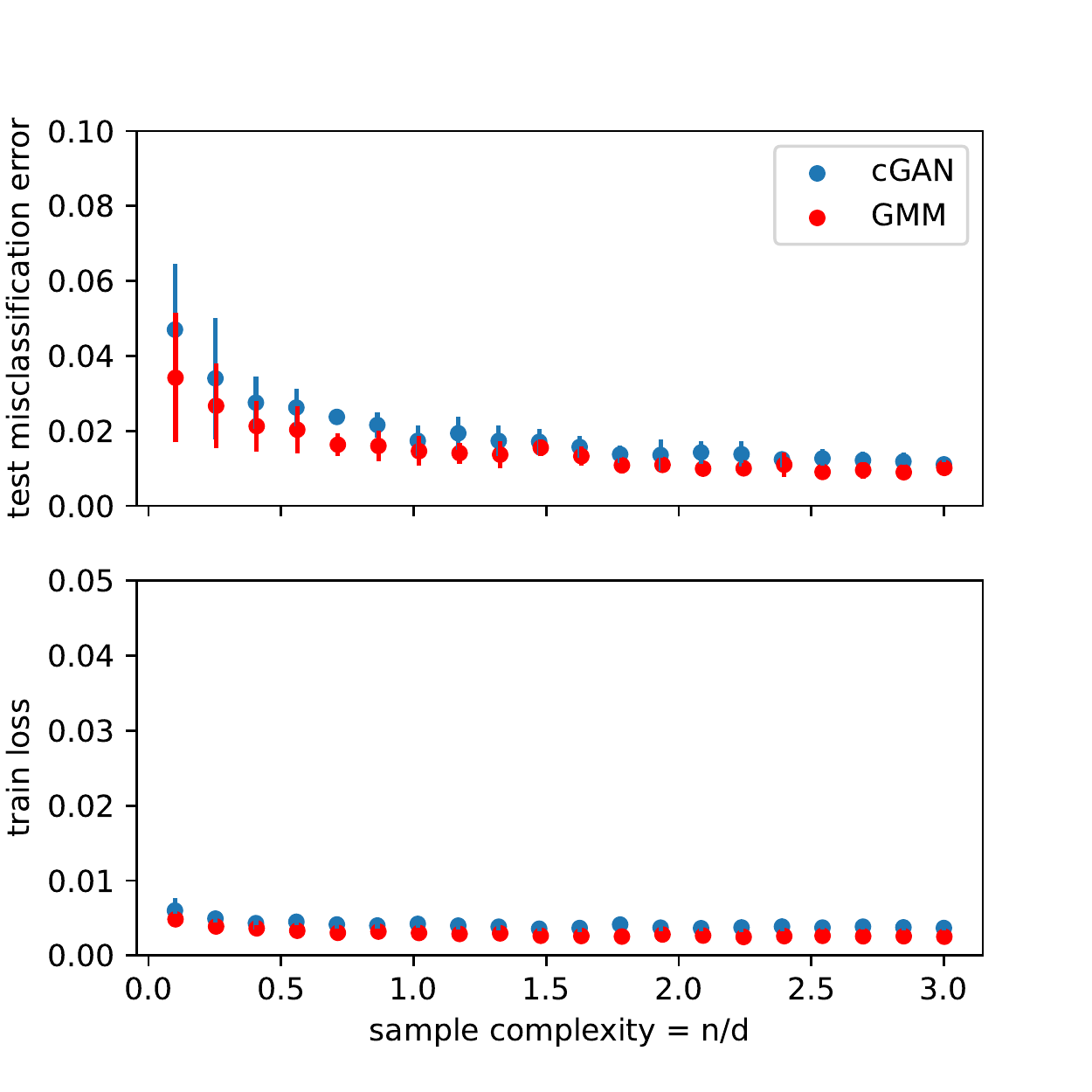}
    \includegraphics[width=0.45\textwidth]{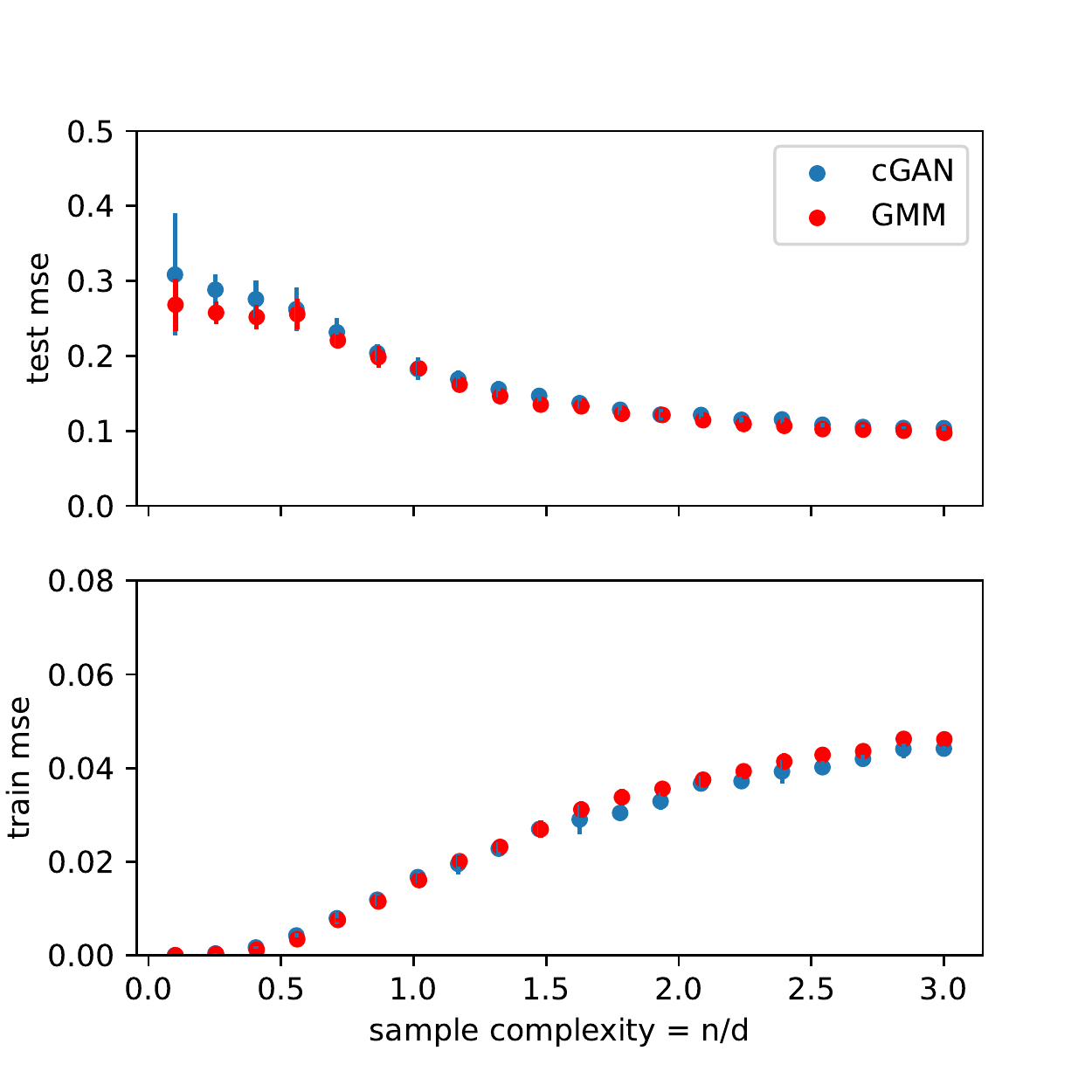}
    \caption{Illustration of the universality scenario described in Fig.\ref{fig:cartoon_theorem1}.  Logistic (left) \& ridge (right) regression test (up) and training (bottom) errors are shown versus the sample complexity $\alpha=\sfrac{n}{d}$ for an odd vs. even binary classification task on two data models: Blue dots data generated from a conditional GAN \citep{Mirza2014} trained on the fashion-MNIST dataset \citep{xiao2017} and pre-processed with a random features map $\bm{x}\mapsto\tanh(\mat{W}\bm{x})$ with Gaussian weights $W\!\in\!\mathbb{R}^{1176\times 784}$; Red dots are the $10$- clusters Gaussian mixture model with means and covariances matching each fashion-MNIST cluster conditioned on labels ($\ell_2$ regularization is $\lambda\!=\!10^{-4}$). Details on the simulations are discussed in Appendix \ref{sec:app:numerics}.}
    \label{fig:experiments}
\end{figure}

\section{Main results} 
\label{sec:mainres}
We now present the main theoretical contributions of the present work and discuss its consequences. We work under the following regularity and concentration assumptions:

\begin{assump}[Loss and regularization]
\label{assump:loss}
    The loss function $\ell : \dR^{k+1} \to \dR$ is nonnegative and Lipschitz, and the regularization function $r : \dR^{p\times k} \to \dR$ is locally Lipschitz, with constants independent from $p$.
\end{assump}

\begin{assump}[Boundedness and concentration]\label{assump:concentration}
    The constraint set $\cS_p$ is a compact subset of $\dR^{k \times p}$. Further, there exists a constant $M > 0$ such that for any $c \geq 0$,
    \begin{equation}
    \sup_{\bm \theta \in \cK_p, \norm{\bm \theta}_2 \leq 1} \norm{\bm \theta^\top \bm x}_{\psi_2} \leq M, \quad \sup_{\bm \theta \in \cK_p, \norm{\bm \theta}_2 \leq 1} \norm{\bSigma_c^{1/2} \bm \theta}_{2} \leq M, \quand \norm{\bmu_c}_2 \leq M
    \end{equation}
    where $\norm{\cdot}_{\psi_2}$ is the sub-gaussian norm, and $\cK_p \subseteq \dR^p$ is such that $\cS_p \subseteq \cK_p^k$.
\end{assump}

\begin{assump}[Labels]\label{assump:labels}
    The labeling function $\eta$ is Lipschitz, the teacher vector $\bTheta$ belongs to $\cS_p$, and the noise variables $\varepsilon_i$ are i.i.d subgaussian with
    $ \norm{\varepsilon_i}_{\psi_2} \leq M $
    for some constant $M > 0$.
\end{assump}

Those three assumptions are fairly technical, and it is possible that the universality properties proven in this article hold irrespective of these conditions. The crucial assumption in our theorems is that of a \emph{conditional one-dimensional CLT}:
\begin{assump}\label{assump:clt}
For any Lipschitz function $\varphi: \dR \to \dR$, 
\begin{equation}\label{eq:one_dimensional_clt_main}
    \lim_{n, p \to \infty} \sup_{\btheta\in\cK_p} 
    \left| 
    \Ea{\varphi(\btheta^\top \bm x)\,\big|\,c_{\bm x}=c} - 
   \Ea{\varphi(\btheta^\top\bm g)\,\big|\,c_{\bm g}=c}
   \right| = 0,\quad \forall c \in \cC
\end{equation}
\end{assump}
where $\bm x$ and $\bm g$ denote samples from the given mixture distribution and the equivalent gaussian mixture distribution in equations \eqref{def:mixture} and \eqref{eq:gaussian_equivalent_model} respectively.

\subsection{Universality of Mixture Models}
We start by proving the universality of the free energy for a Gibbs distribution defined through the objective $\hR_n(\bTheta;\bX,\by)$ for the data distribution defined in (\ref{sec:set}) and its equivalent Gaussian mixture distribution \eqref{eq:gaussian_equivalent_model}. 

\begin{theorem}[Universality of Free Energy]\label{thm:single_univ_free}
Let $\mu_p(\bTheta)$ be a sequence of Borel probability measures with compact supports $\cS_p$. Define the following free energy function:
\begin{equation}
     f_{\beta,n}(\bX)= \int \exp\left(-\beta n \hR_n(\bTheta;\bX,\by(\bX))\right)d\mu_p(\bTheta)
\end{equation}
Under Assumptions \ref{assump:loss}-\ref{assump:clt} on $\bX$ and $\cS_p$, for any bounded differentiable function $\Phi$ with bounded Lipschitz derivative, we have:
\begin{equation}
\nonumber
 \lim_{n, p\to\infty} \left|\Ea{\Phi\left(f_{\beta,n}(\bX)\right)} - 
\Ea{\Phi\left(f_{\beta,n}(\bG)\right)}\right| = 0.
\end{equation}
\end{theorem}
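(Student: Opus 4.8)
The plan is to establish the identity by a Lindeberg-type interpolation in which the $n$ training points are exchanged, one at a time, from the genuine mixture \eqref{def:mixture} to the equivalent Gaussian mixture \eqref{eq:gaussian_equivalent_model}. I would first condition on the class assignments $c_1,\dots,c_n$ and the noises $\eps_1,\dots,\eps_n$, so that Assumption \ref{assump:clt} can be invoked per class, and introduce hybrid designs $\bX^{(t)}$ whose first $t$ columns are Gaussian and whose remaining columns come from the mixture. Telescoping gives $\E[\Phi(f_{\beta,n}(\bX))]-\E[\Phi(f_{\beta,n}(\bG))]=\sum_{t=1}^{n}\big(\E[\Phi(f_{\beta,n}(\bX^{(t-1)}))]-\E[\Phi(f_{\beta,n}(\bX^{(t)}))]\big)$, and the two designs inside each summand differ only in the single sample $t$, which is $\bx_t$ in one and $\bg_t$ in the other (with the coupled label $\eta(\bTheta_\star^\top\bx_t,\eps_t,c_t)$ changing accordingly).

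For one swap I would use a leave-one-out, or cavity, decomposition. Writing $\nu$ for the normalized Gibbs measure built from the $n-1$ frozen samples and $Z_{-t}$ for the corresponding partition function, the free energy factorizes as $f_{\beta,n}=Z_{-t}\,A$, where only the scalar $A=\int\exp(-\beta\,\ell(\bTheta^\top\bx_t,\eta(\bTheta_\star^\top\bx_t,\eps_t,c_t)))\,\dd\nu(\bTheta)\in(0,1]$ depends on sample $t$. The essential observation is that $A$ sees the fresh sample only through the finite family of pre-activations $(\bTheta^\top\bx_t,\bTheta_\star^\top\bx_t)\in\R^{2k}$, integrated against $\nu$. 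I would render this genuinely finite-dimensional by approximating $\nu$ with a finitely supported measure $\nu_m$ sitting on an $\eps$-net of the compact set $\cS_p$ (the approximation error being controlled by the Lipschitz loss of Assumption \ref{assump:loss} and the sub-Gaussian projection bound of Assumption \ref{assump:concentration}). Then $A$ is a bounded, Lipschitz function of the finitely many one-dimensional projections $\btheta^\top\bx_t$ appearing in the net, each of the exact form appearing in Assumption \ref{assump:clt}; the joint closeness of these projections under $\bx_t$ versus $\bg_t$ follows from the one-dimensional statement by a Cramér--Wold plus smoothing argument (using that the relevant combined directions remain admissible, up to rescaling, in $\cK_p$), and Assumption \ref{assump:labels} guarantees the label coupling through $\bTheta_\star^\top\bx_t$ is itself Lipschitz.

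The delicate point, and the one I expect to be the main obstacle, is summability: the conditional CLT only delivers an $o(1)$ matching per swap, whereas there are $n$ swaps. This is where the $1/n$ scale of a single sample's influence on the free energy must be exploited. Concretely, I would carry out the comparison at the level of the log free energy, $\log f_{\beta,n}=\log Z_{-t}+\log A$, so that the cavity term $\log Z_{-t}$ does not depend on sample $t$ and the increment contributed by that sample enters the free-energy density only at order $1/n$. A first-order Taylor expansion of $\Phi$ around the cavity value then produces a leading term of size $\tfrac1n\,\E[\Phi'(\cdot)\,(\log A(\bx_t)-\log A(\bg_t))]$, whose conditional expectation is matched to $o(1/n)$ by the finite-dimensional CLT above, while the quadratic remainder is $O(1/n^2)$ thanks to the bounded Lipschitz derivative of $\Phi$ and second-moment control of $\log A$ (furnished by Jensen's inequality together with the sub-Gaussian and Lipschitz bounds of Assumptions \ref{assump:loss}--\ref{assump:labels}). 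Summing the $n$ swaps then yields $o(1)$. Two technical burdens remain inside this scheme: upgrading Assumption \ref{assump:clt} from bounded Lipschitz test functions to the merely locally bounded functional $\log A$, which I would handle by truncation with tail bounds uniform in $p$; and ensuring the covering number of $\cS_p$ used for $\nu_m$ does not grow with $p$ fast enough to spoil the uniformity in Assumption \ref{assump:clt}. Averaging back over the conditioned classes and noises completes the argument.
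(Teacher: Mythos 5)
Your overall strategy differs from the paper's: you propose a discrete Lindeberg swap (one sample at a time) combined with a leave-one-out factorization $f=Z_{-t}\,A$, whereas the paper follows \cite{montanari2022universality} and uses a continuous Guerra-type interpolation $\bm u_{t,i} = \bmu_{\sigma(i)} + \cos(t)(\bm x_i - \bmu_{\sigma(i)}) + \sin(t)(\bm g_i - \bmu_{\sigma(i)})$, differentiating $\E[\Phi(f_{\beta,n}(\bU_t))]$ in $t$. The continuous path buys an exact algebraic cancellation, $\E[(\tfrac{d\bm u_t}{dt})^\top \bm u_t]=0$, from second-moment matching alone; the one-dimensional CLT is then invoked only to decouple the fresh sample from the Gibbs average in the derivative, where it appears \emph{linearly} through $\langle \nabla\ell(\bTheta^\top\bm u_{t,i})\,\bTheta\rangle$. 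The paper's other contributions to this proof (handling non-identically-distributed columns by conditioning on the finitely many classes, centering the path at the class means, and extending from $\eps$-net reference measures to general Borel measures by dominated convergence) are points your plan also touches, and your conditioning on $(c_i,\eps_i)$ is consistent with theirs.

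However, there is a genuine gap at the heart of your per-swap estimate. You need $\E_t[\log A(\bx_t)]-\E_t[\log A(\bg_t)]=o(1)$, where $A=\int e^{-\beta\ell(\bTheta^\top\bx_t,y_t)}\,d\nu(\bTheta)$. Because of the outer logarithm you cannot pass the expectation over $\bx_t$ inside the $\nu$-integral by Fubini, so $\log A$ is not an average of test functions of a single projection: it is a nonlinear functional of the entire field $\{\bTheta^\top\bx_t\}_{\bTheta\in\mathrm{supp}\,\nu}$. Your proposed reduction---replace $\nu$ by a measure on an $\eps$-net of $\cS_p$ and invoke Cram\'er--Wold---does not close this, for two reasons. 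First, the net has cardinality $e^{\Theta(p\log(1/\eps))}$, and Assumption \ref{assump:clt} is purely qualitative (an $o(1)$ with no rate), so neither a union bound nor a smoothing argument can control a joint law over exponentially many directions; a process-level CLT is strictly stronger than what is assumed. Second, Cram\'er--Wold requires the CLT along arbitrary linear combinations of directions in $\cK_p$, but $\cK_p$ is not assumed closed under addition. A Lindeberg-swap proof of this type of statement is possible (it is essentially the route of \cite{hu2020universality}), but it requires quantitative moment-matching with explicit per-swap rates of order $o(1/n)$ obtained from third-order Taylor expansions, not the qualitative Assumption \ref{assump:clt}; as written, your argument does not supply the summable per-swap bound it needs.
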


When $\mu_p$ corresponds to discrete measures supported on an $\epsilon$-net in $\cS_p$, using the reduction from Lemma 1 to Theorem 1 in \cite{montanari2022universality}, we obtain the following corollary:
\begin{corollary}\label{corr:gmm_error}
[Universality of Training Error GMM] \label{thm:main_gaussian_universality}
For any bounded Lipschitz function $\Phi: \dR \to \dR$:
\[\lim_{n, p \to \infty} \left| \Ea{\Phi\left(\hR_n^\star(\bm X, \bm y(\bm X))\right)} - \Ea{\Phi\left(\hR_n^\star(\bm G, \bm y(\bm G))\right)} \right| = 0\]
In particular, for any $\mathcal{E} \in \dR$, and denoting 
 $\overset{\dP}{\longrightarrow}$  the convergence in probability:
\begin{equation}\label{eq:in_proba_convergence}
\hR_n^\star(\bm X, \bm y(\bm X)) \overset{\dP}{\longrightarrow} \mathcal{E} \quad \text{if and only if} \quad \hR_n^\star(\bm G, \bm y(\bm G)) \overset{\dP}{\longrightarrow} \mathcal{E},
\end{equation}
\end{corollary}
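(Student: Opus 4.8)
The plan is to realise the minimal training error $\hR_n^\star$ as the zero-temperature limit of the free energy of Theorem~\ref{thm:single_univ_free} evaluated on a discretised prior, and to push universality through this limit. Take $\mu_p$ to be the uniform probability measure on an $\epsilon$-net $\{\bTheta_1,\dots,\bTheta_{N_\epsilon}\}$ of the compact set $\cS_p$; since $\cS_p\subset\dR^{k\times p}$ is bounded (Assumption~\ref{assump:concentration}), a volumetric bound gives $N_\epsilon\le (C/\epsilon)^{kp}$. For this measure $f_{\beta,n}(\bX)=N_\epsilon^{-1}\sum_{j}e^{-\beta n\hR_n(\bTheta_j;\bX,\by(\bX))}\in(0,1]$, and the elementary log-sum-exp inequalities give
\[
\min_{j}\hR_n(\bTheta_j;\bX,\by)\;\le\;-\tfrac{1}{\beta n}\log f_{\beta,n}(\bX)\;\le\;\min_{j}\hR_n(\bTheta_j;\bX,\by)+\tfrac{\log N_\epsilon}{\beta n}.
\]

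First I would control the discretisation. Assumptions~\ref{assump:loss} and~\ref{assump:concentration} make $\bTheta\mapsto\hR_n(\bTheta;\bX,\by)$ Lipschitz on $\cS_p$: the $L_\ell$-Lipschitzness of $\ell$ controls $\tfrac1n\sum_i|\ell(\bTheta_1^\top\bx_i,\cdot)-\ell(\bTheta_2^\top\bx_i,\cdot)|$ through $\tfrac1n\sum_i\norm{(\bTheta_1-\bTheta_2)^\top\bx_i}$, uniformly controlled on the relevant scale by the sub-gaussian bound of Assumption~\ref{assump:concentration}, while $r$ is locally Lipschitz. Hence $|\min_j\hR_n(\bTheta_j)-\hR_n^\star(\bX,\by)|\le C\epsilon$ outside a vanishing-probability event, and combining with the sandwich,
\[
\left|-\tfrac{1}{\beta n}\log f_{\beta,n}(\bX)-\hR_n^\star(\bX,\by(\bX))\right|\le C\epsilon+\tfrac{\log N_\epsilon}{\beta n},
\]
and identically for $\bG$.

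For the transfer, fix $\beta$ and $\epsilon$. Theorem~\ref{thm:single_univ_free} gives universality of the free energy $f_{\beta,n}$; combined with the two-sided sandwich above, the reduction of \cite{montanari2022universality} upgrades this to universality of the $O(1)$ free-energy density $F_{\beta,n}\coloneqq-\tfrac{1}{\beta n}\log f_{\beta,n}$, i.e. $\lim_{n,p}|\Ea{\Phi(F_{\beta,n}(\bX))}-\Ea{\Phi(F_{\beta,n}(\bG))}|=0$ for bounded Lipschitz $\Phi$. A triangle inequality with the previous display then yields
\[
\limsup_{n,p\to\infty}\left|\Ea{\Phi(\hR_n^\star(\bX,\by(\bX)))}-\Ea{\Phi(\hR_n^\star(\bG,\by(\bG)))}\right|\le 2L_\Phi\left(C\epsilon+\tfrac{k\log(C/\epsilon)}{\beta\alpha}\right),
\]
using $\log N_\epsilon/(\beta n)\to k\log(C/\epsilon)/(\beta\alpha)$ with $p/n\to 1/\alpha$. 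Sending $\beta\to\infty$ and then $\epsilon\to0$ makes the right-hand side vanish, which is the first assertion of the corollary. The equivalence~\eqref{eq:in_proba_convergence} follows because agreement of $\Ea{\Phi(\cdot)}$ for all bounded Lipschitz $\Phi$ is exactly equality of the weak limits of the laws of $\hR_n^\star(\bX,\by(\bX))$ and $\hR_n^\star(\bG,\by(\bG))$; since convergence in probability to the constant $\mathcal{E}$ is equivalent to weak convergence to $\delta_{\mathcal{E}}$ (Portmanteau), either convergence forces the same weak limit for the other, hence the same convergence in probability.

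The main obstacle is the interchange of the three limits. The map $f\mapsto-\tfrac{1}{\beta n}\log f$ relating the partition function to the density has Lipschitz constant that degrades like $e^{\beta n B}$ over the relevant range $f\in[e^{-\beta n B},1]$, so one cannot simply feed the fixed composite test function $\Phi\circ(-\tfrac{1}{\beta n}\log)$ into Theorem~\ref{thm:single_univ_free} and let $n\to\infty$; the $n$- and $\beta$-dependence of this transform is what forces the specific order ($n,p\to\infty$ at fixed $\beta,\epsilon$, then $\beta\to\infty$, then $\epsilon\to0$) and is precisely what the \cite{montanari2022universality} reduction is engineered to handle. Making the Lipschitz control of $\hR_n$ over the net uniform via Assumption~\ref{assump:concentration}, and checking that the covering number enters only through the benign term $\log N_\epsilon/(\beta n)$, is where the technical work concentrates.
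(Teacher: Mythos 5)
Your proposal is correct and follows essentially the same route as the paper, which obtains the corollary from Theorem \ref{thm:single_univ_free} via the reduction of \cite{montanari2022universality}: discretize on an $\epsilon$-net, sandwich $\hR_n^\star$ between the free-energy density and the net minimum using the log-sum-exp bounds and the Lipschitz continuity of $\hR_n$ guaranteed by Assumptions \ref{assump:loss}--\ref{assump:concentration}, then send $n,p\to\infty$ at fixed $(\epsilon,\beta)$ before letting $\beta\to\infty$ and $\epsilon\to 0$. Your explicit tracking of the $C\epsilon + \log N_\epsilon/(\beta n)$ error and the Portmanteau argument for \eqref{eq:in_proba_convergence} match the paper's (largely cited) argument.
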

The full theorem, as well as its proof, is presented in Appendix \ref{app:gaussian_universality}. In a nutshell, this theorem shows that the multi-modal data generated by any generative neural network is equivalent to a {\it finite} mixture of Gaussian in high-dimensions: in other words, a {\it finite} mixture of Gaussians leads to the same loss as  for data generated by (for instance) a cGAN. Since the function $\ell: \mathbb R^{k+1} \to \mathbb R$ need not be convex, we can take
\[ \ell(\bm x_{\text{out}}, y) = \ell'(\bm \Psi(\bm x_{\text{out}}), y), \]
where $\bm \Psi$ is an already pretrained neural network. In particular, if $\bm x$ is the output of all but the last layer of a neural net,  we can view $\bm \Psi$ as the averaging procedure for a small committee machine. 

Note that Corollary \ref{thm:main_gaussian_universality} depends crucially on assumption \ref{assump:clt} (the one-dimensional CLT), which is by no means evident. We discuss the conditions on the weights matrix for which it can be proven in section \ref{sec:1d-CLT}. However, one can observe empirically that the validity of Corollary \ref{thm:main_gaussian_universality} goes well beyond what can be currently proven. A number of numerical illustrations of this property can be found in the work of \cite{seddik_2020_random,louart2018random,seddik_2021_unexpected}, who already derived similar (albeit more limited) results using random matrix theory. Additionally, we observed that even with trained GANs, when we observed data through a random feature map \citep{rahimi2007random}, the Gaussian mixture universality is well obeyed. This scenario is illustrated in Fig.\ref{fig:cartoon_theorem1}, with a concrete example in Fig.\ref{fig:experiments}. Even though we did not prove the one-dimensional CLT for arbitrary learned matrices, and worked with finite moderate sizes, the realistic data generated by our cGAN behaves extremely closely with those of generated by the corresponding Gaussian mixture.

A second remark is that the interest of the Corollary lies in the fact that it requires only a {\it finite} mixture to approximate the loss. Indeed, while  we could use the standard approximation results (e.g. the Stone-Weierstrass theorem) to approximate the data density to arbitrary precision by Gaussian mixtures, this would require a diverging number of Gaussian in the mixture. The fact that loss is captured with finite $\cC$ is key to our approach.

\paragraph*{Proof sketch and remarks ---}

While we provide a complete proof of these results in App. \ref{app:gaussian_universality}, we believe it is useful to present a short intuitive presentation explaining why these results hold. 
\begin{enumerate}[wide=1pt, topsep=1pt,itemsep=0pt]
    \item A crucial first remark is that Theorem \ref{thm:single_univ_free} and Corollary \ref{thm:main_gaussian_universality} do not require that the data generated by GANs are Gaussians mixtures: rather, it is their one-dimensional projections along the directions in ${\cal S}_p$ that should behave as such. The first, intuitive, explanation, is that indeed in high dimension, for a randomly chosen vector ${\bm\theta\in\cS_p}$, it is natural to expect that $\bm\theta^\top \bm x)$ behaves like a Gaussian mixture. Indeed, if we condition on a given label, then $z$ is Gaussian, the  random variable $\bm x = \Psi_{\text{nn}}({\bf z}),$ has a well defined mean and variance (at least if $\Psi$ is a Lipschitz function), so that the central limit theorem shows that $\bm\theta^\top \bm x)$ converges to a Gaussian variable. 
    \item We require, however, a slightly stronger condition in eq.~\eqref{eq:one_dimensional_clt_main}: Indeed, it should be that, conditioned on a label, 
    ${\bm\theta\in\cS_p}$ is Gaussian {\it for all } ${\bm\theta\in\cS_p}$, not a randomly chosen one (since we do certainly do not chose our weights randomly). This condition  might appear strong. However, such one-dimensional CLTs have been the subject of many recent works which proved them for many cases \citep{goldt2020gaussian,hu2020universality,montanari2022universality}, including random features and two-layers neural tangent kernels. 
    We extend the proof of the one-dimensional CLT to mixture models in section \ref{sec:1d-CLT}. We also provide further formal arguments in App. \ref{app:1d_clt_arguments}. In particular, we argue that a large class of distributions, including deep generative models, do satisfy this condition that 
    can also be checked empirically in simulations \citep{goldt2020gaussian,seddik_2021_unexpected}.    
    \item    The one-dimensional CLT  now implies that 
    $\mathbb{E}\left[ \hR_n \left( \bTheta;\bX, \by(\bX) \right) \right] \simeq \mathbb{E}\left[ \hR_n \left( \bTheta; \bG, \by(\bG) \right) \right]$
    for any \textbf{fixed} choice of $\bTheta$, \textbf{independent from the data}. 
    There is another, additional difficulty:  when one performs empirical risk minimization, the minimizer $\hat{\bTheta}(\bX)$ {\bf strongly depends} on the data $\bX$, and the naive 1d-CLT simply does not apply!  Solving the problem of the dependence of the estimator over the data is the main mathematical difficulty in proving Thm. \ref{thm:main_gaussian_universality}. This is achieved in the Appendix by using a method due to \cite{montanari2022universality}, that leverage on the Guerra interpolation techniques used to prove the validity of the replica method \cite{guerra2003broken}.  The idea is to define a t-dependent model that uses $n$ dataset points at $t=0$, and GMM ones at $t=1$, and to show that the free energy (and all observables) remains constants at all ``times'' $t \in [0,1]$). This establishes fully the universality advocated in our theorem.     \hfill \qedsymbol  
\end{enumerate}

\subsection{Convergence of expectations for Joint Minimization and Sampling}

Our next result establishes a general relationship between the differentiability of the limit of expected training errors or free energies for empirical risk minimization or free energies for sampling and the universality of expectations of a given function of the parameters.

\paragraph{Setup} Consider a sequence of $M$ risks
\begin{equation}
     \hR^{(m)}_n(\bTheta;\bX^{(m)},\by^{(m)}) := \frac{1}{n}\sum_{i=1}^n
\ell_m(\bTheta^{\top}\bx_i^{(m)},y^{(m)}_i)+r_m(\bTheta), \quad m\in [M]
\end{equation}
with possibly different losses, regularizers and datasets. For simplicity, we assume that the objectives are defined on parameters having the same dimension $\bTheta\in\mathbb{R}^{p\times k}$. We aim to minimize $M_1$ of them:
\begin{equation}\label{def:min_multi}
    \hat{\bTheta}^{(m)}(\bX) \in \argmin_{\bTheta \in \cS_p^{(m)}}\  \hR^{(m)}_n(\bTheta;\bX^{(m)},\by^{(m)}), \quad m \in [M_1]
\end{equation}
and the $M_2$ remaining parameters are independently sampled from a family of Gibbs distributions:
\begin{equation}\label{def:sampling}
    \bTheta^{(m)}\sim P_m(\bm \Theta) \propto \exp{\rbr*{-\beta_m \hR^{(m)}_n\left(\bTheta;\bX^{(m)},\by^{(m)}\right)}d\mu_m(\bTheta)},  \quad m \in [M_1+1,M],
\end{equation}
where $M=M_1+M_2$.
The joint distribution of the $\bx_i = (\bx_i^{(1)}, \dots, \bx_i^{(M)})$ is assumed to be a mixture of the form \eqref{def:mixture}. However, we assume that the labels $\by_i^{(m)}$ only depend on the vectors $\bx_i^{(m)}$:
\begin{equation}
y^{(m)}_{i}(\bX^{(m)}) = \eta(\bm \Theta^{(m)\top}_{\star}\bm x^{(m)}_i, \eps^{(m)}_i, c_i).
\end{equation}
The equivalent Gaussian inputs $\bg_i = (\bg_i^{(1)}, \dots, \bg_i^{(M)})$ and their associated labels $\by(\bG)$ are defined as in \eqref{eq:gaussian_equivalent_model}.

\paragraph{Statistical metric and free energy ---} Our goal is to study statistical metrics for some function $h: \mathbb{R}^{M \times k \times p} \to \mathbb{R}$ of the form $h(\bTheta^{(1)},\cdots,\bTheta^{(M)})$. For instance, the metric $h$ could be the population risk (a.k.a. generalization error), or some overlap between $\bTheta$ and $\bTheta_\star$.  We define the following coupling free energy function:
\begin{equation}
\label{eq:free_energy_def}
    f_{n,s}(\bTheta[1:M_1],\bm \bX, \by) = -\frac1{n}{\log\int} e^{-sn~h(\bTheta^{(1)}, \dots, \bTheta^{(M)})}dP^{(M_1+1):M},
\end{equation}
where $P^{(M_1+1):M}$ denotes the product measure of the $P_m$ defined in \eqref{def:sampling}.
This gives rise to the following joint objective:
\begin{equation}\label{eq:def:joint_minimization_sampling}
\hR_{n, s}(\bTheta[1:M_1],\bX,\by)=\sum_{m=1}^{M_1}\hR^{(m)}_n(\bTheta^{(m)};\bX^{(m)},\by^{(m)}) + f_{n,s}(\bTheta[1:M_1],\bm \bX, \by).
\end{equation}
In particular, when $s = 0$ we have $f_{n, 0} = 0$ and the problem reduces to the joint minimization problem in \eqref{def:min_multi}. Our first result concerns the universality of the minimum of the above problem:
\begin{proposition}[Universality for joint minimization and sampling]
\label{prop:joint_univ} 
Under Assumptions \ref{assump:clt} For any $s > 0$ and bounded Lipschitz function $\Phi: \dR \to \dR$, and denoting $\hR_{n, s}^{\star}(\bm X, \bm y)\coloneqq \min \hR_{n, s}(\bTheta;\bm X, \bm y)$:
\[\lim_{n, p \to \infty} \left| \Ea{\Phi\left(\hR_{n, s}^\star(\bm X, \bm y(\bm X))\right)} - \Ea{\Phi\left(\hR_{n, s}^\star(\bm G, \bm y(\bm G))\right)} \right| = 0\]
\end{proposition}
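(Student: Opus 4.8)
The plan is to realize the minimal value $\hR_{n,s}^\star$ as the zero-temperature limit of a free energy and then to extend the interpolation argument behind Theorem~\ref{thm:single_univ_free} to this combined, nested object. Concretely, I would introduce an auxiliary inverse temperature $\beta>0$ and reference measures $d\mu_m$ on $\cS_p^{(m)}$ for $m\in[M_1]$, and define
\begin{equation}
\nonumber
F_{\beta,n,s}(\bX) = -\frac{1}{\beta n}\log \int e^{-\beta n\, \hR_{n,s}(\bTheta[1:M_1];\bX,\by)}\, \prod_{m=1}^{M_1} d\mu_m(\bTheta^{(m)}).
\end{equation}
By Laplace's method, the compactness of the constraint sets (Assumption~\ref{assump:concentration}) and the continuity of $\hR_{n,s}$ give $F_{\beta,n,s}(\bX)\to \hR_{n,s}^\star(\bX)$ as $\beta\to\infty$, with a rate controlled uniformly in $n$ by the Lipschitz constants of the losses and of $h$. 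This reduces the claim to the universality of $F_{\beta,n,s}$ at each fixed $\beta$, together with the interchange of the limits $\beta\to\infty$ and $n,p\to\infty$; the latter I would justify exactly as in the reduction from Lemma~1 to Theorem~1 of \cite{montanari2022universality} already invoked for Corollary~\ref{corr:gmm_error}.

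The key structural observation is that $F_{\beta,n,s}$ depends on the data only through the family of scalar pre-activations $\{\bTheta^{(m)\top}\bx_i^{(m)}\}$. Each $\hR_n^{(m)}$ depends on $\bX$ through $\bTheta^{(m)\top}\bx_i^{(m)}$, and the inner free energy $f_{n,s} = -\tfrac1n \log \int e^{-sn\,h(\bTheta^{(1)},\dots,\bTheta^{(M)})}\, dP^{(M_1+1):M}$ depends on $\bX$ only through the Gibbs measures $P_m$, whose densities are themselves functions of the pre-activations of the sampled objectives ($h$ carries no explicit data dependence). Crucially, $f_{n,s}$ is Lipschitz in these pre-activations with the right $O(1/n)$ per-coordinate sensitivity: differentiating the inner log-partition function produces a Gibbs average of $\partial_{\text{pre-act}}\hR_n^{(m)} = O(1/n)$, bounded via Assumptions~\ref{assump:loss} and~\ref{assump:labels} together with mild regularity (boundedness/Lipschitzness) of $h$. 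Hence $\hR_{n,s}=\sum_{m\le M_1}\hR_n^{(m)}+f_{n,s}$, and therefore the whole exponent $-\beta n\,\hR_{n,s}$, is a Lipschitz function of the joint pre-activations with constants uniform in $n,p$.

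With this regularity in hand I would run the Gaussian interpolation scheme of Theorem~\ref{thm:single_univ_free}, interpolating the mixture samples $\bx_i^{(m)}$ into the equivalent Gaussian samples $\bg_i^{(m)}$ and showing that the interpolation derivative of $\E[\Phi(F_{\beta,n,s})]$ vanishes as $n,p\to\infty$. Each increment is controlled by the boundedness and bounded Lipschitz derivative of $\Phi$ combined with the conditional one-dimensional CLT (Assumption~\ref{assump:clt}) applied to the single scalar $\bTheta^{(m)\top}\bx_i^{(m)}$ being swapped; the $O(1/n)$ per-coordinate sensitivity established above lets the telescoping sum of the $n$ increments remain $o(1)$. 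This yields $\bigl|\E[\Phi(F_{\beta,n,s}(\bX))]-\E[\Phi(F_{\beta,n,s}(\bG))]\bigr|\to 0$ for each fixed $\beta$, and composing with the $\beta\to\infty$ reduction of the first paragraph gives the proposition.

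The step I expect to be the main obstacle is the nested free energy $f_{n,s}$. One must verify that integrating out the sampled parameters $\bTheta^{(M_1+1):M}$ preserves Lipschitz dependence on the pre-activations uniformly in $n$ and does not degrade under interpolation, and that the \emph{one-dimensional} CLT remains sufficient even though $h$ couples all $M$ parameter blocks — here the point is that a Lindeberg-type swap replaces only one scalar pre-activation at a time, so the coupling enters solely through the already-controlled inner Gibbs averages. Establishing these uniform-in-$n$ regularity estimates for $f_{n,s}$, and the attendant control needed to interchange $\beta\to\infty$ with $n,p\to\infty$, is where the genuine work lies; the rest follows the template of Theorem~\ref{thm:single_univ_free}.
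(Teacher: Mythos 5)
Your overall architecture (finite-temperature relaxation of the minimum, $\epsilon$-net/Laplace reduction, Gaussian interpolation controlled by the one-dimensional CLT) matches the paper's, but you take a genuinely different route at the decisive step. The paper does \emph{not} re-run the interpolation on the nested object $F_{\beta,n,s}$: it flattens the entire multi-objective, nested-Gibbs structure into a \emph{single} free energy on a product space, by stacking $(\bTheta^{(1)},\dots,\bTheta^{(M)})$ into a block-diagonal matrix, concatenating the inputs and labels, defining a combined loss $\ell(\bu,\by)=\sum_m\beta_m\ell_m(\cdot)$ and a combined regularizer $r(\bTheta)=\sum_m\beta_m r_m(\cdot)+s\,h(\cdot)$, and taking the reference measure to be $\mu_\epsilon^{1:M_1}\otimes\mu^{(M_1+1):M}$. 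Since $h$ depends only on the parameters, it is absorbed into the regularizer at no cost, and the sampled blocks' Gibbs weights become part of the single exponent; Theorem \ref{thm:single_univ_free} — which was deliberately proved for arbitrary Borel reference measures and non-product constraint sets — then applies verbatim, and the $\epsilon$-net approximation recovers $\hR^\star_{n,s}$. This buys exactly what you identify as the main obstacle: there is no nested free energy left to control, so the uniform-in-$n$ Lipschitz estimates for $f_{n,s}$ and their stability under interpolation never need to be established. Your direct approach is workable — your $O(1/n)$ per-coordinate sensitivity computation for $f_{n,s}$ is correct, since differentiating the inner log-partition function yields a Gibbs average of $\partial\ell_m/n$ — but it defers the genuinely delicate point: when a whole column $\bx_i=(\bx_i^{(1)},\dots,\bx_i^{(M)})$ is interpolated, all $M$ blocks' pre-activations change simultaneously, so one needs the CLT jointly over the stacked projections, not a one-scalar-at-a-time swap as you suggest. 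The paper's reduction makes this explicit (the combined constraint set $\cS_{pM}$ must satisfy the concentration and CLT assumptions), whereas in your version it remains the unexecuted ``genuine work.'' In short, your plan is sound but reproves the hard part; the paper's block-diagonal reduction makes the proposition a corollary of what is already established.
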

The proof is an easy reduction to Theorem \ref{thm:main_gaussian_universality}, and can be found in Appendix \ref{sec:app:joint_univ}.

The next result concerns the value of $h$ at the minimizers point $(\hat{\bTheta}^{(1)}, \dots, \hat{\bTheta}^{(M)})$. We make the following additional assumptions:
\begin{assump}[Differentiable Limit]\label{assump:diff_lim}
There exists a neighborhood of $0$ such that the function
\begin{equation}
    q_{n}(s)=\Ea{\hR_{n, s}^\star(\bm G, \bm y(\bm G))}
\end{equation}
converges pointwise to a function $q(s)$ that is differentiable at $0$.
\end{assump}
For a fixed realization of the dataset $\bX$, we denote by $\lin{h(\bTheta^{(1)},\cdots,\bTheta^{(M)})}_\bX$ the expected value of $h$ when $(\hat{\bTheta}^{(1)},\dots,\hat{\bTheta}^{(M_1)})$ are obtained through the minimization of \eqref{def:min_multi} and $(\bTheta^{(M_1+1)},\dots,\bTheta^{(M)})$ are sampled according to the Boltzmann distributions \eqref{def:sampling}. 

\begin{assump}\label{assump:indep_min}
With high probability on $\bX, \bG$, the value $\lin{h(\bTheta^{(1)},\cdots,\bTheta^{(M)})}_\bX$ (resp. the same for $\bG$) is independent from the chosen minimizers in \eqref{def:min_multi}.
\end{assump}

 In other words, the metric $h$ respects the symmetries of the problem. Then the following holds:
\begin{theorem}\label{thm:overlap_universality} Under assumptions \ref{assump:loss}-\ref{assump:indep_min}, we have:
\begin{equation}
   \lim_{n, p \rightarrow \infty}\left|\Ea{\lin{h\left(\bTheta^{(1)},\cdots,\bTheta^{(M)}\right)}_\bX} - \Ea{\lin{h\left(\bTheta^{(1)},\cdots,\bTheta^{(M)}\right)}_\bG} \right| = 0,
\end{equation}
\end{theorem}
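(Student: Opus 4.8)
The plan is to recognize $\lin{h}_\bX$ as the derivative at $s=0$ of the coupling free energy $\hR_{n,s}^\star$ of \eqref{eq:def:joint_minimization_sampling}, and then to transfer the universality of that free energy (Proposition~\ref{prop:joint_univ}) from function \emph{values} to \emph{derivatives} via a convexity argument. First I would record that for each fixed $\bTheta[1:M_1]$ the map $s \mapsto f_{n,s}(\bTheta[1:M_1],\bX,\by)$ is concave: its second derivative equals $-n$ times the variance of $h$ under the tilted Gibbs measure. Hence $s\mapsto \hR_{n,s}(\bTheta[1:M_1],\bX,\by)$ is concave, and being an infimum of concave functions over $\bTheta[1:M_1]$, so is $s\mapsto \hR_{n,s}^\star(\bX,\by)$. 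Consequently both $q_n(s) = \Ea{\hR_{n,s}^\star(\bG,\by(\bG))}$ (the object of Assumption~\ref{assump:diff_lim}) and $\tilde q_n(s) = \Ea{\hR_{n,s}^\star(\bX,\by(\bX))}$ are concave on a neighborhood of $0$.

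The second step is the envelope computation. Direct differentiation gives $\partial_s f_{n,s}(\bTheta[1:M_1])|_{s=0} = \int h\, dP^{(M_1+1):M} = \lin{h}$, i.e. the average of $h$ over the sampled blocks evaluated at $\bTheta[1:M_1]$. Writing $\Theta_0$ for the minimizer set of the $s=0$ problem \eqref{def:min_multi} and using that an infimum of concave functions has one-sided derivatives controlled by its active branches, I would obtain $(\hR_{n,s}^\star)'_+(0) \le \inf_{\hat\bTheta\in\Theta_0}\lin{h(\hat\bTheta,\cdot)}$ and $(\hR_{n,s}^\star)'_-(0)\ge \sup_{\hat\bTheta\in\Theta_0}\lin{h(\hat\bTheta,\cdot)}$. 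This is exactly where Assumption~\ref{assump:indep_min} enters: since $\lin{h}_\bX$ does not depend on the chosen minimizer with high probability, the infimum and supremum collapse to the common value $\lin{h}_\bX$, pinning it between the two one-sided derivatives. Taking expectations (the difference quotients are monotone in $s$ by concavity, so the interchange follows from monotone convergence and the boundedness furnished by Assumptions~\ref{assump:loss}--\ref{assump:concentration}) yields the sandwich $q_n'(0^+) \le \Ea{\lin{h}_\bG} \le q_n'(0^-)$, and identically $\tilde q_n'(0^+) \le \Ea{\lin{h}_\bX}\le \tilde q_n'(0^-)$.

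Finally I would upgrade Proposition~\ref{prop:joint_univ} from bounded-Lipschitz test functions to the identity: since $\hR_{n,s}^\star$ is uniformly bounded on the compact $\cS_p$, replacing $\Phi$ by a truncation equal to the identity on its range gives $\tilde q_n(s) - q_n(s)\to 0$, so $\tilde q_n \to q$ as well. Both $q_n$ and $\tilde q_n$ are then concave, converge pointwise on a neighborhood of $0$ to the same limit $q$, and $q$ is differentiable at $0$ by Assumption~\ref{assump:diff_lim}. The convexity lemma (Rockafellar, Thm.~25.7) then forces the one-sided derivatives $q_n'(0^\pm)$ and $\tilde q_n'(0^\pm)$ all to converge to $q'(0)$; squeezing through the two sandwiches gives $\Ea{\lin{h}_\bX}\to q'(0)$ and $\Ea{\lin{h}_\bG}\to q'(0)$, whence the claim.

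The main obstacle is precisely this passage from convergence of values to convergence of derivatives at $s=0$, which hinges on three delicate points: (i) the envelope step with possibly non-unique minimizers, which is why Assumption~\ref{assump:indep_min} is needed to make $\lin{h}$ minimizer-independent and hence identifiable with the derivative; (ii) the requirement that $q$ be genuinely differentiable at $0$ (Assumption~\ref{assump:diff_lim}), without which concave functions converging pointwise need not have converging derivatives; and (iii) ensuring universality holds on a \emph{two-sided} neighborhood of $0$ --- Proposition~\ref{prop:joint_univ} is stated for $s>0$, but its interpolation proof is insensitive to the sign of $s$, so its extension to small $s<0$ (needed for the left-derivative bound) is routine. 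The uniform integrability required both to upgrade the bounded-Lipschitz statement to expectations and to interchange derivative with expectation is the remaining technical chore, controlled throughout by the compactness and Lipschitz content of Assumptions~\ref{assump:loss}--\ref{assump:concentration}.
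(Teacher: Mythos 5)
Your proposal is correct and follows essentially the same route as the paper's proof: concavity of $s\mapsto \hR_{n,s}^\star$ via the variance identity and the infimum of concave functions, identification of $\Ea{\lin{h}}$ with $q_n'(0)$ (your hand-rolled envelope/sandwich argument is exactly the Danskin-theorem step the paper invokes, with Assumption~\ref{assump:indep_min} collapsing the subdifferential to a point), and then Rockafellar's Theorem~25.7 applied to both $q_n$ and $\tilde q_n$ converging to the common differentiable limit $q$. Your explicit treatment of the two-sided neighborhood of $s=0$ and of upgrading the bounded-Lipschitz test functions to the identity are points the paper leaves implicit, but they do not change the argument.
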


\paragraph{Proof Sketch:}

Our proof relies on the observation that $q_{n}(s)$ is a concave function of $s$. We further have:
    \begin{equation}\label{eq:first_diff}
        q'_{n}(0) = \Ea{\lin{h\left(\bTheta^{(1)},\cdots,\bTheta^{(M)}\right)}_{\bG}}.
    \end{equation}

Subsequently, we utilize a standard result from convex analysis relating the convergence of a sequence of convex or concave functions to the convergence of the corresponding derivatives. The above result shows that the expected value of $h\left(\bTheta^{(1)},\cdots,\bTheta^{(M)}\right)$ for a multi-modal data satisfying the 1d CLT is equivalent to that of a mixture of Gaussians.
It generalizes the universality of generalization error in \cite{montanari2022universality,hu2020universality}to arbitrary function of parameters arising from an arbitrary number of Hamiltonians for mixture models. The full theorem is presented and proven in App. \ref{app:gaussian_universality}. While we prove our result for convergence in expectations and assumption \ref{assump:diff_lim}, the result can be generalized using standard techniques to convergence in probability and other related assumptions. We refer to the alternative assumptions in Theorem 3 of \cite{montanari2022universality} for more details.

\subsection{Universal Weak Convergence}
\label{sec:universal_wc}

Theorem \ref{thm:overlap_universality} provides a general framework for proving the equivalence of arbitrary functions of parameters obtained by minimization/sampling on a given mixture dataset and the equivalent gaussian mixture distribution. 
However, it relies on the assumption of a differentiable limit of the free energy (assumption \ref{assump:diff_lim}). If the assumption holds for a sequences of functions belonging to dense subsets of particular classes of functions, it allows us to prove convergence of minimizers themselves, in a weak sense. We illustrate this through a simple setup considered in \cite{loureiro2021learning_gm}, which precisely characterized the asymptotic distribution of the minimizers of empirical risk with GMM data in the strictly convex case. Consider the following setup:
\begin{equation}\label{eq:gmm_erm}
\left(\hat{\bW}^\bX, \hat{\bbb}^\bX\right) =\ \argmin_{\bW, \bbb}\ \sum_{i=1}^n\ell\left(\frac{\bW\bx_i}{\sqrt d}+\bbb, \by_i\right)+\lambda r(\bW),
\end{equation}
where $\bW \in \dR^{|\cC| \times d}$, $\bbb \in \dR^{|\cC|}$ and $\by_i \in \mathbb{R}^{|\cC|}$ is the one-hot encoding of the class index $c_i$.
For simplicity, we restrict ourselves to diagonal bounded covariances and bounded means.

\begin{assump}\label{assump:cov_bounded}
All of the covariance matrices $\Sigma_c$ are diagonal, with strictly positive eigenvalues $(\sigma_{c, i})_{i \in [d]}$, and there exists a constant $M > 0$ such that for any $c \in \cC$
\begin{equation}
    \sigma_{c, i} \leq M \quand \norm{\bmu_c}_2 \leq M.
\end{equation}
\end{assump}

Secondly, since we aim at obtaining a result on the weak convergence of the estimators, we assume the same weak convergence for the means and covariances, and that the regularization only depends on the empirical measure of $\bW$.
\begin{assump}\label{assump:emp_conv}
    The empirical distribution of the $\bmu_c$ and $\bSigma_c$ converges weakly as follows:
    \begin{equation}
        \frac1d \sum_{i=1}^d \prod_{c \in \cC} \delta(\mu_c - \sqrt{d}\mu_{c, i}) \delta(\sigma_c - \sigma_{c, i}) \quad \xrightarrow[d \to \infty]{\cL} \quad p(\bm\sigma, \bm\mu)
    \end{equation}
\end{assump}
\begin{assump}
\label{assump:reg}
The regularizer $r(\cdot)$ is of the following form:
\begin{equation}
    r(\bW) = \sum_{i=1}^d \psi_r(\bW_i),
\end{equation}
for some convex and twice differentiable function $\psi_r:\dR \rightarrow \dR$.
\end{assump}

Under these conditions, the joint measure of the minimizers and of the data moments converges weakly to a fixed limit, independent of the data-distribution:
\begin{theorem}\label{thm:weak_conv}
Assume that conditions \ref{assump:loss}-\ref{assump:reg} hold, and further that the function $\ell(\bullet,y)+r(\bullet)$  is convex and coercive.
Then, for any bounded-Lipschitz function: $\Phi:\R^{3|\cC|} \rightarrow \R$, we have:
\begin{equation}\label{eq:joint_emp_measure}
    \Ea{\frac{1}{d}\sum_{i=1}^d \Phi(\{(\hat{\bW}^\bX)_{c, i}\}_{c \in \cC},\{\mu_{c, i}\}_{c \in \cC},\{\sigma_{c, i}\}_{c \in \cC})} \xrightarrow{d\to+\infty}\Eb{\tilde p}{\Phi(\bw, \bm\mu, \bm\sigma)},
\end{equation}
where $\tilde p$ is a measure on $\dR^{3|\cC|}$, that is determined by the so-called \emph{replica equations}.
\end{theorem}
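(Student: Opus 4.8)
The plan is to obtain the weak convergence as a consequence of the expectation-level universality already established in Theorem \ref{thm:overlap_universality}, applied to the linear statistic associated with $\Phi$, combined with the explicit Gaussian-side characterization coming from the strictly convex GMM analysis of \cite{loureiro2021learning_gm}. For a fixed bounded-Lipschitz $\Phi$, introduce the separable functional
\[
h_\Phi(\bW) \;=\; \frac{1}{d}\sum_{i=1}^{d} \Phi\!\left(\{W_{c,i}\}_{c\in\cC},\,\{\mu_{c,i}\}_{c\in\cC},\,\{\sigma_{c,i}\}_{c\in\cC}\right),
\]
where the $\mu_{c,i},\sigma_{c,i}$ are deterministic, so that $h_\Phi$ depends on the data only through $\hat\bW$. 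Then the left-hand side of \eqref{eq:joint_emp_measure} is exactly $\mathbb{E}[h_\Phi(\hat\bW^\bX)]$ and its Gaussian analogue is $\mathbb{E}[h_\Phi(\hat\bW^\bG)]$. Under the rescaling $\bTheta=\bW/\sqrt d$ built into \eqref{eq:gmm_erm}, a single-coordinate change moves $h_\Phi$ by $O(1/d)$, so $h_\Phi$ is $O(1)$-Lipschitz as a function of $\bTheta$ and is an admissible metric for the $M=M_1=1$ (pure minimization, no sampling) instance of the joint framework, where $\langle h_\Phi\rangle_\bX = h_\Phi(\hat\bW^\bX)$ by uniqueness of the minimizer.

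The first substantive task is to secure the hypotheses of Theorem \ref{thm:overlap_universality} for the whole family $\{h_\Phi\}$. Assumption \ref{assump:indep_min} is immediate: since $\ell(\bullet,y)+r(\bullet)$ is convex and coercive, the minimizer $(\hat\bW,\hat\bbb)$ is essentially unique with high probability, so $\langle h_\Phi\rangle$ is independent of the selected minimizer. The differentiable-limit requirement (Assumption \ref{assump:diff_lim}) is the crux. Concavity of $q_n(s)=\mathbb{E}[\hR_{n,s}^{\star}(\bG,\by(\bG))]$ is automatic, because $\hR_{n,s}^{\star}=\min_{\bW}[\hR_n+s\,h_\Phi]$ is a pointwise infimum of functions affine in $s$. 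The key structural observation is that $s\,h_\Phi$ is separable over the coordinates $i$ and can therefore be folded into a modified separable regularizer of the form $\psi_r(\bW_i)+\tfrac{s}{d}\Phi(\cdots)$; the replica characterization of \cite{loureiro2021learning_gm} for strictly convex GMM problems then applies to the perturbed problem and delivers the pointwise limit $q(s)$ through the replica order parameters. Under strict convexity and coercivity there is no phase transition at $s=0$, the replica saddle point is unique and depends smoothly on $s$, so $q$ is differentiable at $0$, with $q'(0)=\mathbb{E}[\langle h_\Phi\rangle_\bG]$ as in \eqref{eq:first_diff}.

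With the abstract assumptions verified, Theorem \ref{thm:overlap_universality} yields $\lim_{d\to\infty}\big|\mathbb{E}[h_\Phi(\hat\bW^\bX)]-\mathbb{E}[h_\Phi(\hat\bW^\bG)]\big|=0$. It then remains only to identify the Gaussian-side limit, which is exactly the content imported from \cite{loureiro2021learning_gm}: under Assumptions \ref{assump:cov_bounded}-\ref{assump:reg} together with convexity and coercivity, the joint empirical measure of $(\{(\hat\bW^\bG)_{c,i}\}_c,\{\mu_{c,i}\}_c,\{\sigma_{c,i}\}_c)$ converges weakly to the replica measure $\tilde p$ on $\dR^{3|\cC|}$, i.e. $\mathbb{E}[h_\Phi(\hat\bW^\bG)]\to\mathbb{E}_{\tilde p}[\Phi]$ for every bounded-Lipschitz $\Phi$. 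Chaining the two displays gives $\mathbb{E}[h_\Phi(\hat\bW^\bX)]\to\mathbb{E}_{\tilde p}[\Phi]$, and since $\Phi$ ranges over all bounded-Lipschitz functions, a standard portmanteau argument upgrades this to the claimed weak convergence of the empirical measures in \eqref{eq:joint_emp_measure}.

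I expect the main obstacle to be the verification of Assumption \ref{assump:diff_lim} uniformly over the family $\{h_\Phi\}$, and in particular the passage from finite-$d$ concavity of $q_n$ to a limit $q$ that is genuinely \emph{differentiable} at $0$ rather than merely super-differentiable. This is where strict convexity is indispensable: it guarantees uniqueness of both the minimizer and the associated replica saddle, ruling out a nondifferentiable corner in $q$ at $s=0$. Convex analysis then gives that pointwise convergence of the concave $q_n$ forces convergence of their derivatives at any point where the limit is differentiable, which is precisely what converts the free-energy universality into the convergence $\mathbb{E}[\langle h_\Phi\rangle_\bX]\to q'(0)=\mathbb{E}_{\tilde p}[\Phi]$; controlling this simultaneously for the dense class of perturbations is the delicate part of the argument.
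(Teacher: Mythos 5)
Your overall architecture matches the paper's: fold the separable perturbation $s\,h_\Phi$ into the regularizer, use the replica characterization of \cite{loureiro2021learning_gm} on the Gaussian side to verify Assumption \ref{assump:diff_lim}, apply Theorem \ref{thm:overlap_universality}, and identify the limit as $\Eb{\tilde p}{\Phi}$. However, there is a genuine gap in the step where you claim that ``the replica characterization of \cite{loureiro2021learning_gm} for strictly convex GMM problems then applies to the perturbed problem.'' For a generic bounded-Lipschitz $\Phi$, the perturbed regularizer $\psi_r(\bW_i) + \tfrac{s}{d}\Phi(\cdots)$ need not be convex for \emph{any} $s \neq 0$: a Lipschitz bound controls the first derivative but not the curvature, so an arbitrarily small multiple of a rapidly oscillating $\Phi$ can destroy convexity, and with it the hypotheses under which the replica equations (and the uniqueness of the saddle) are proven. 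Strict convexity of the \emph{unperturbed} problem does not rescue this. The paper avoids the issue by a two-stage argument: it first restricts to twice-differentiable $\Phi$ with bounded Hessian, so that for $|s|$ small enough the perturbed objective remains strictly convex and coercive and the fixed-point/implicit-function-theorem machinery applies, and only afterwards removes the smoothness restriction by a density (Stone--Weierstrass-type) approximation over bounded-Lipschitz test functions. Your proposal skips this smoothing-and-approximation step entirely, and as written the key step would fail.

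A secondary, less fatal issue is that you assert the limit $q(s)$ exists and that ``the replica saddle point is unique and depends smoothly on $s$'' without establishing it. The replica equations of \cite{loureiro2021learning_gm} are a $d$-dependent fixed-point system $\bu = F_n(\bu)$; obtaining a genuine limit $q(s)$ requires showing that this system converges uniformly to a limiting system $\bu = F(\bu)$ (this is where Assumptions \ref{assump:cov_bounded}--\ref{assump:reg} — diagonal covariances, weak convergence of the empirical measure of means and variances, separable regularizer — enter, since they make each coordinate of the proximal map depend only on coordinate-wise quantities), that the solutions $\hat\bu_n$ converge to the solution of the limiting system, and that the limiting Jacobian $\bI - \tfrac{dF}{d\bu}$ is invertible so the implicit function theorem yields differentiability of $\hat\bu(s)$ at $s=0$. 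The paper devotes a dedicated lemma to this uniform convergence and adds an explicit assumption on boundedness, uniqueness, and Jacobian invertibility of the fixed points; your proposal treats all of this as automatic from strict convexity, which is where the real work of the proof lies.
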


\paragraph{Proof Sketch}
Let $\bTheta$ denote the combined set of parameters $\{\bW,\bbb\}$. We first show that for $h(\bTheta)$ having bounded second derivatives, the
perturbation term $sh(\bTheta)$ can be absorbed into the regularizer, while maintaining the assumptions in \cite{loureiro2021learning_gm}. 
Next, we show that the sequence of solutions converge in a suitable sense, and are described through the limits of the replica equations from \cite{loureiro2021learning_gm}. This allows us to prove that assumption \ref{assump:diff_lim} holds for functions that can be expressed as expectations w.r.t the joint empirical measure in \ref{eq:joint_emp_measure}. More details can be found in Appendix \ref{app:weak_conv}.

\subsection{One-dimensional CLT for Random Features} \label{sec:1d-CLT}
We finally show a conditional one-dimensional CLT for a random features map applied to a mixture of gaussians, in the vein of those shown in \cite{goldt2020gaussian, hu2020universality, montanari2022universality}. Concretely, we consider the following setup:
\begin{equation}
    \bx_i = \sigma(\bF^\top \bz_i), \quad \bz_i \sim \sum_{c \in \cC} \mathcal{N}(\bmu_c^\bz, \bSigma_c^\bz),
\end{equation}
where the feature matrix $\bF \in \dR^{d \times p}$ has i.i.d $\cN(0, \sfrac1d)$ entries. This setup is much more permissive than the ones in \cite{hu2020universality, montanari2022universality}, that restrict the samples $\bz$ to standard normal vectors. However, we do require some technical assumptions:
\begin{assump}\label{assump:1dclt_bounded}
    The activation function $\sigma$ is thrice differentiable, with $\norm{\sigma^{(i)}}\leq M$ for some $M > 0$, and we have
    \begin{equation}
        \Eb{g \sim \cN(0, 1)}{\sigma(g)} = 0.
    \end{equation}
    The cluster means and covariances of $\bz$ satisfy for all $c \in \cC$
    \begin{equation}
        \norm{\bmu_c^\bz} \leq M, \qquad \norm{\bSigma_c^\bz}_{\mathrm{op}} \leq M
    \end{equation}
    for some constant $M > 0$.
\end{assump}
 We also place ourselves in the proportional regime, i.e. a regime where $p / d \in [\gamma^{-1}, \gamma]$ for some $\gamma > 0$. For simplicity, we will consider the case $k = 1$; and the constraint set $\cS_p$ as follows:
 \begin{equation}\label{eq:1d_clt_constraints}
     \cS_p = \Set*{\btheta \in \dR^d \given \norm{\btheta}_2 \leq R, \quad \norm{\btheta}_\infty \leq Cp^{-\eta}}
 \end{equation}
 for a given $\eta > 0$. We show in the appendix the following theorem:
\begin{theorem}\label{thm:1d_clt}
    Under Assumption \ref{assump:1dclt_bounded}, and with high probability on the feature matrix $\bF$, the data $\bX$ satisfy the concentration assumption \ref{assump:concentration}, as well as the one-dimensional CLT of Assumption \ref{assump:clt}. Consequently, the results of Theorems \ref{thm:single_univ_free} and \ref{thm:overlap_universality} apply to $\bX$ and their Gaussian equivalent $\bG$.
\end{theorem}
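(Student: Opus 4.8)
Suppose the cluster label $c$ is fixed; I will verify both Assumption~\ref{assump:concentration} and Assumption~\ref{assump:clt} by freezing $\bF$ (on a high-probability event) and reducing everything to a single smooth function of an underlying standard Gaussian vector. Conditionally on $c$, write $\bz = \bmu_c^\bz + (\bSigma_c^\bz)^{1/2}\bxi$ with $\bxi\sim\cN(0,\bI_d)$, so that the pre-activations become $\bF^\top\bz = \bF^\top\bmu_c^\bz + \tilde\bF^\top\bxi$ with $\tilde\bF := (\bSigma_c^\bz)^{1/2}\bF$, and the projected feature is the scalar
\[
 S(\bxi) := \btheta^\top\bx = \sum_{j=1}^p \theta_j\,\sigma\big((\bF^\top\bmu_c^\bz)_j + (\tilde\bF^\top\bxi)_j\big),
\]
writing $h_j$ for the argument of $\sigma$ in the $j$-th summand. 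Since the equivalent Gaussian $\bg$ is built with $\bmu_c=\E[\bx\mid c]$ and $\bSigma_c=\Cov(\bx\mid c)$ for the same $\bF$, the target law $\btheta^\top\bg$ is exactly $\cN(\E_\bxi S,\Var_\bxi S)$; hence the mean and variance are matched by construction, and the whole content of Assumption~\ref{assump:clt} is a quantitative central limit theorem for $S(\bxi)$.

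First I would record the deterministic control on $\tilde\bF$: because $\bF$ has i.i.d.\ $\cN(0,1/d)$ entries and $p/d\in[\gamma^{-1},\gamma]$, standard random-matrix bounds give $\norm{\bF}_{\op}=O(1)$ with high probability, and $\norm{\tilde\bF}_{\op}\le\norm{\bSigma_c^\bz}_{\op}^{1/2}\norm{\bF}_{\op}=O(1)$ by Assumption~\ref{assump:1dclt_bounded}. Differentiating, $\nabla S = \tilde\bF\bv$ with $v_j=\theta_j\sigma'(h_j)$ and $\nabla^2 S = \tilde\bF\,\diag(\theta_j\sigma''(h_j))\,\tilde\bF^\top$. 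Using $\abs{\sigma'},\abs{\sigma''}\le M$ together with the constraints \eqref{eq:1d_clt_constraints}, I would bound, uniformly in $\bxi$ and in $\btheta\in\cK_p$,
\[
 \norm{\nabla S}_2 \le \norm{\tilde\bF}_{\op}\,M\,\norm{\btheta}_2 = O(1),
 \qquad
 \norm{\nabla^2 S}_{\op}\le \norm{\tilde\bF}_{\op}^2\,M\,\norm{\btheta}_\infty = O(p^{-\eta}).
\]
The crucial point is that the $\ell_\infty$ delocalization $\norm{\btheta}_\infty\le Cp^{-\eta}$ is exactly what forces the Hessian to be small: no coordinate of $\btheta$ carries macroscopic weight, so $S$ is a nearly linear function of $\bxi$.

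The heart of the argument is then a second-order Gaussian Poincar\'e inequality (Chatterjee; Nourdin--Peccati): for $W=f(\bxi)$ a $C^2$ function of a standard Gaussian vector,
\[
 d_{\mathrm{TV}}\!\Big(\frac{W-\E W}{\sqrt{\Var W}},\,\cN(0,1)\Big)
 \;\le\; \frac{C}{\Var W}\,\big(\E\norm{\nabla f}_2^4\big)^{1/4}\big(\E\norm{\nabla^2 f}_{\op}^4\big)^{1/4}.
\]
Plugging in the two bounds above yields, with $\sigma_S^2:=\Var_\bxi S$, an estimate of the form $d_{\mathrm{TV}}\le C\,O(p^{-\eta})/\sigma_S^2$. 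I would close with a dichotomy on $\sigma_S^2$: when $\sigma_S^2\ge p^{-\eta/2}$ the Poincar\'e bound is $O(p^{-\eta/2})\to0$; when $\sigma_S^2<p^{-\eta/2}$ both $S$ and $\btheta^\top\bg$ concentrate within $O(p^{-\eta/4})$ of their common mean, so any Lipschitz $\varphi$ sees a difference $O(p^{-\eta/4})$. Passing from total variation to Lipschitz (Wasserstein) test functions uses the sub-Gaussian tails of $S$, which come for free from Gaussian--Lipschitz concentration applied to the $O(1)$-Lipschitz map $\bxi\mapsto S(\bxi)$. As every estimate is uniform over $\btheta\in\cK_p$, taking the supremum and letting $n,p\to\infty$ gives Assumption~\ref{assump:clt}.

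Finally, Assumption~\ref{assump:concentration} follows from the same ingredients: the bound $\norm{\nabla S}_2=O(\norm{\btheta}_2)$ shows $\bxi\mapsto\btheta^\top\bx$ is $O(1)$-Lipschitz for $\norm{\btheta}_2\le1$, whence $\norm{\btheta^\top\bx}_{\psi_2}=O(1)$ by Gaussian concentration; the estimate $\norm{\bSigma_c^{1/2}\btheta}_2\le\norm{\tilde\bF}_{\op}\,M$ is immediate, and $\norm{\bmu_c}_2\le M$ reduces to the concentration of the pre-activation statistics around their population values, where the mean-zero condition $\E_{g\sim\cN(0,1)}\sigma(g)=0$ is needed to control $\E[\sigma(h_j)\mid c]$ (this is the most delicate of the concentration bounds). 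I expect the main obstacle to be precisely the uniformity-versus-degeneracy tension: the second-order Poincar\'e estimate deteriorates when $\sigma_S^2$ is small, so the near-deterministic directions $\btheta$ must be handled separately, and one must ensure that the single high-probability event on $\bF$ (the operator-norm bound) is the one on which both the CLT and the concentration estimates hold simultaneously.
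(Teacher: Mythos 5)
Your argument is correct in substance but takes a genuinely different route for the central step. The paper proves Assumption \ref{assump:clt} by reducing to Theorem 2 / Lemma 2 of \cite{hu2020universality}: after writing $\bz = \bmu_c^\bz + \bSigma_c^{\bz 1/2}\bz'$ it absorbs the covariance into the modified feature matrix $\bV = \bSigma_c^{\bz 1/2}\bF$ and the mean shift into neuron-wise activations $\sigma_i(u)=\sigma(u+\bff_i^\top\bmu_c^\bz)$, then audits the Stein's-method proof of \cite{hu2020universality} to check it tolerates non-identity covariance, neuron-dependent activations and a non-odd $\sigma$, arriving at the bound $C\norm{\btheta}_\infty\polylog(p)/\nu^2$ and the same small-variance dichotomy you use. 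You instead apply Chatterjee's second-order Gaussian Poincar\'e inequality directly to $S(\bxi)=\btheta^\top\sigma(\bF^\top\bmu_c^\bz+\tilde\bF^\top\bxi)$, which is arguably cleaner: it makes transparent that the $\ell_\infty$ delocalization of $\btheta$ is exactly what kills the Hessian, it handles the mean shift and covariance with no case analysis, and it avoids re-deriving someone else's Stein estimate; the price is a total-variation bound that must be converted to Lipschitz test functions via the sub-Gaussian tails (which you correctly note), and slightly cruder polynomial rates. Two places where your sketch is thinner than the paper's proof: (i) the bound $\norm{\bmu_c}_2\le M$ is the one genuinely delicate concentration estimate, and the paper carries it out explicitly by expanding $\sigma(\bff_i^\top\bz)=\sigma(\tilde z)+(\bff_i^\top\bmu_c^\bz+\tau_i\tilde z)\tilde\sigma(\cdot)$ with $\tau_i=\norm{\bv_i}-1$, using $\E\sigma(\tilde z)=0$ and Cauchy--Schwarz against $\norm{\bF}_{\op}^2\norm{\bmu_c^\bz}^2+\sum_i\lvert\norm{\bv_i}^2-1\rvert$ on a high-probability event controlling the row norms of $\bV$ --- you identify the right ingredients but should write this out, since without the zero-mean normalization the claim fails; (ii) you should state explicitly that all estimates are made on a single high-probability event for $\bF$ (operator norms of $\bF$ and $\bV$ plus the row-norm concentration $\sum_i\lvert\norm{\bv_i}^2-1\rvert\le C$), as you anticipate in your closing remark. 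Neither point is a gap in the approach, only in the write-up.
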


\paragraph{Proof Sketch}
Our proof constructs a reduction to the one-dimensional CLT for random features in \cite{hu2020universality,montanari2022universality,goldt2020gaussian}. We first note that the one-dimensional CLT in \cite{hu2020universality,montanari2022universality,goldt2020gaussian} holds even when the activation functions differ across neurons.
Subsequently, we proceed by defining the following neuron-wise activation functions:
\begin{equation}
    \sigma_{i,c}(u) =  \sigma(u+\bm{f}^\top_i\bmu_{c}).
\end{equation}
However, the above activation functions depend on the means and the dimensions of the inputs. Our proof involves controlling the effect of this dependence.
Additionally, we handle the effect of the covariance by observing that the features $\bx$ can be equivalently generated by applying the modified weights $\bSigma_c^{\bz 1/2}\bF$ to isotropic Gaussian noise. Further details can be found in Appendix \ref{app:finite_1dclt}.

While we prove the above result for random weights, we note, however that the non-asymptotic results in \cite{hu2020universality, goldt2020gaussian} also hold for deterministic matrices satisfying approximate orthogonality conditions. Therefore, we expect the one-dimensional CLT to approximately hold for a much larger class of feature maps. Finally, we also note that the above extension of the one-dimensional CLT to mixture of gaussians also provides a proof for the asymptotic error for random features in \cite{refinetti2021classifying}.

\section{Conclusion}  
We demonstrate the universality of the Gaussian mixture assumption in high-dimension for various machine learning tasks such as empirical risk minimization, sampling and ensembling, in a variety of settings including random features or GAN generated data. We also show that universality holds for a large class of functions,  and provide a weak convergence theorem. These results, we believe, vindicate the classical theoretical line of works on the Gaussian mixture design. We hope that our results will stimulate further research in this area.

\section*{Acknowledgements}
We acknowledge funding from the ERC under the European Union’s Horizon 2020 Research and Innovation Program Grant Agreement 714608-SMiLe, the Swiss National Science Foundation grant SNFS OperaGOST, $200021\_200390$ and the \textit{Choose France - CNRS AI Rising Talents} program.

\newpage

\bibliographystyle{plainnat}
\bibliography{bib}

\begin{thebibliography}{64}
\providecommand{\natexlab}[1]{#1}
\providecommand{\url}[1]{\texttt{#1}}
\expandafter\ifx\csname urlstyle\endcsname\relax
  \providecommand{\doi}[1]{doi: #1}\else
  \providecommand{\doi}{doi: \begingroup \urlstyle{rm}\Url}\fi

\bibitem[Abbara et~al.(2020)Abbara, Baker, Krzakala, and
  Zdeborov\'a]{Abbara_2020}
Alia Abbara, Antoine Baker, Florent Krzakala, and Lenka Zdeborov\'a.
\newblock On the universality of noiseless linear estimation with respect to
  the measurement matrix.
\newblock \emph{Journal of Physics A: Mathematical and Theoretical},
  53\penalty0 (16):\penalty0 164001, mar 2020.
\newblock \doi{10.1088/1751-8121/ab59ef}.

\bibitem[Abbasi et~al.(2019)Abbasi, Salehi, and Hassibi]{NEURIPS2019_dffbb6ef}
Ehsan Abbasi, Fariborz Salehi, and Babak Hassibi.
\newblock Universality in learning from linear measurements.
\newblock In H.~Wallach, H.~Larochelle, A.~Beygelzimer, F.~d\textquotesingle
  Alch\'{e}-Buc, E.~Fox, and R.~Garnett, editors, \emph{Advances in Neural
  Information Processing Systems}, volume~32. Curran Associates, Inc., 2019.

\bibitem[Bartlett et~al.(2017)Bartlett, Foster, and
  Telgarsky]{bartlett_2017_spectrally}
Peter~L Bartlett, Dylan~J Foster, and Matus~J Telgarsky.
\newblock Spectrally-normalized margin bounds for neural networks.
\newblock In I.~Guyon, U.~Von Luxburg, S.~Bengio, H.~Wallach, R.~Fergus,
  S.~Vishwanathan, and R.~Garnett, editors, \emph{Advances in Neural
  Information Processing Systems}, volume~30. Curran Associates, Inc., 2017.

\bibitem[Bartlett et~al.(2020)Bartlett, Long, Lugosi, and
  Tsigler]{Bartlett30063}
Peter~L. Bartlett, Philip~M. Long, G{\'a}bor Lugosi, and Alexander Tsigler.
\newblock Benign overfitting in linear regression.
\newblock \emph{Proceedings of the National Academy of Sciences}, 117\penalty0
  (48):\penalty0 30063--30070, 2020.
\newblock ISSN 0027-8424.
\newblock \doi{10.1073/pnas.1907378117}.

\bibitem[Behrmann et~al.(2021)Behrmann, Vicol, Wang, Grosse, and
  Jacobsen]{behrmann_2021_understanding}
Jens Behrmann, Paul Vicol, Kuan-Chieh Wang, Roger Grosse, and Joern-Henrik
  Jacobsen.
\newblock Understanding and mitigating exploding inverses in invertible neural
  networks.
\newblock In Arindam Banerjee and Kenji Fukumizu, editors, \emph{Proceedings of
  The 24th International Conference on Artificial Intelligence and Statistics},
  volume 130 of \emph{Proceedings of Machine Learning Research}, pages
  1792--1800. PMLR, 13--15 Apr 2021.

\bibitem[Bobkov(2003)]{bobkov_concentration_2003}
S.~G. Bobkov.
\newblock On concentration of distributions of random weighted sums.
\newblock \emph{The Annals of Probability}, 31\penalty0 (1):\penalty0 195--215,
  January 2003.
\newblock ISSN 0091-1798, 2168-894X.
\newblock \doi{10.1214/aop/1046294309}.

\bibitem[Cand{\`e}s et~al.(2020)Cand{\`e}s, Sur, et~al.]{candes2020phase}
Emmanuel~J Cand{\`e}s, Pragya Sur, et~al.
\newblock The phase transition for the existence of the maximum likelihood
  estimate in high-dimensional logistic regression.
\newblock \emph{The Annals of Statistics}, 48\penalty0 (1):\penalty0 27--42,
  2020.

\bibitem[Celentano et~al.(2020)Celentano, Montanari, and
  Wei]{celentano2020lasso}
Michael Celentano, Andrea Montanari, and Yuting Wei.
\newblock The lasso with general gaussian designs with applications to
  hypothesis testing.
\newblock \emph{arXiv: 2007.13716}, 2020.

\bibitem[Chen et~al.(2016)Chen, Duan, Houthooft, Schulman, Sutskever, and
  Abbeel]{Chen2016}
Xi~Chen, Yan Duan, Rein Houthooft, John Schulman, Ilya Sutskever, and Pieter
  Abbeel.
\newblock Infogan: Interpretable representation learning by information
  maximizing generative adversarial nets.
\newblock \emph{arXiv: 1606.03657}, 2016.
\newblock \doi{10.48550/ARXIV.1606.03657}.

\bibitem[Clarté et~al.(2022{\natexlab{a}})Clarté, Loureiro, Krzakala, and
  Zdeborová]{Clarte2022a}
Lucas Clarté, Bruno Loureiro, Florent Krzakala, and Lenka Zdeborová.
\newblock Theoretical characterization of uncertainty in high-dimensional
  linear classification.
\newblock \emph{arXiv: 2202.03295}, 2022{\natexlab{a}}.
\newblock \doi{10.48550/ARXIV.2202.03295}.

\bibitem[Clarté et~al.(2022{\natexlab{b}})Clarté, Loureiro, Krzakala, and
  Zdeborová]{Clarte2022b}
Lucas Clarté, Bruno Loureiro, Florent Krzakala, and Lenka Zdeborová.
\newblock A study of uncertainty quantification in overparametrized
  high-dimensional models.
\newblock \emph{arXiv: 2210.12760}, 2022{\natexlab{b}}.
\newblock \doi{10.48550/ARXIV.2210.12760}.

\bibitem[Dhifallah and Lu(2020)]{dhifallah2021}
Oussama Dhifallah and Yue~M. Lu.
\newblock A precise performance analysis of learning with random features.
\newblock \emph{arXiv: 2008.11904}, 2020.
\newblock \doi{10.48550/ARXIV.2008.11904}.

\bibitem[Donoho and Tanner(2009)]{donoho2009observed}
David Donoho and Jared Tanner.
\newblock Observed universality of phase transitions in high-dimensional
  geometry, with implications for modern data analysis and signal processing.
\newblock \emph{Philosophical Transactions of the Royal Society A:
  Mathematical, Physical and Engineering Sciences}, 367\penalty0
  (1906):\penalty0 4273--4293, 2009.

\bibitem[Dudeja et~al.(2022)Dudeja, Sen, and Lu]{Dudeja2022}
Rishabh Dudeja, Subhabrata Sen, and Yue~M. Lu.
\newblock Spectral universality of regularized linear regression with nearly
  deterministic sensing matrices.
\newblock \emph{arXiv: 2208.02753}, 2022.
\newblock \doi{10.48550/ARXIV.2208.02753}.

\bibitem[d’Ascoli et~al.(2020)d’Ascoli, Refinetti, Biroli, and
  Krzakala]{d2020double}
St{\'e}phane d’Ascoli, Maria Refinetti, Giulio Biroli, and Florent Krzakala.
\newblock Double trouble in double descent: Bias and variance (s) in the lazy
  regime.
\newblock In \emph{International Conference on Machine Learning}, pages
  2280--2290. PMLR, 2020.

\bibitem[El~Karoui(2010)]{el2010spectrum}
Noureddine El~Karoui.
\newblock The spectrum of kernel random matrices.
\newblock \emph{The Annals of Statistics}, 38\penalty0 (1):\penalty0 1--50,
  2010.

\bibitem[Facco et~al.(2017)Facco, d'Errico, Rodriguez, and
  Laio]{facco2017estimating}
Elena Facco, Maria d'Errico, Alex Rodriguez, and Alessandro Laio.
\newblock Estimating the intrinsic dimension of datasets by a minimal
  neighborhood information.
\newblock \emph{Scientific Reports}, 7\penalty0 (1):\penalty0 12140, 2017.
\newblock \doi{10.1038/s41598-017-11873-y}.

\bibitem[Geiger et~al.(2020)Geiger, Jacot, Spigler, Gabriel, Sagun, d'Ascoli,
  Biroli, Hongler, and Wyart]{Geiger_2020}
Mario Geiger, Arthur Jacot, Stefano Spigler, Franck Gabriel, Levent Sagun,
  Stéphane d'Ascoli, Giulio Biroli, Clément Hongler, and Matthieu Wyart.
\newblock Scaling description of generalization with number of parameters in
  deep learning.
\newblock \emph{Journal of Statistical Mechanics: Theory and Experiment},
  2020\penalty0 (2):\penalty0 023401, feb 2020.
\newblock \doi{10.1088/1742-5468/ab633c}.

\bibitem[Gerace et~al.(2020)Gerace, Loureiro, Krzakala, M{\'e}zard, and
  Zdeborov{\'a}]{gerace2020generalisation}
Federica Gerace, Bruno Loureiro, Florent Krzakala, Marc M{\'e}zard, and Lenka
  Zdeborov{\'a}.
\newblock Generalisation error in learning with random features and the hidden
  manifold model.
\newblock In \emph{International Conference on Machine Learning}, pages
  3452--3462. PMLR, 2020.

\bibitem[Gerace et~al.(2022)Gerace, Krzakala, Loureiro, Stephan, and
  Zdeborová]{gerace2022}
Federica Gerace, Florent Krzakala, Bruno Loureiro, Ludovic Stephan, and Lenka
  Zdeborová.
\newblock Gaussian universality of linear classifiers with random labels in
  high-dimension.
\newblock \emph{arXiv: 2205.13303}, 2022.
\newblock \doi{10.48550/ARXIV.2205.13303}.

\bibitem[Goldt et~al.(2019)Goldt, M{\'e}zard, Krzakala, and
  Zdeborov{\'a}]{goldt2019modelling}
Sebastian Goldt, Marc M{\'e}zard, Florent Krzakala, and Lenka Zdeborov{\'a}.
\newblock Modelling the influence of data structure on learning in neural
  networks: the hidden manifold model.
\newblock \emph{Physical Review X}, 10:\penalty0 041044, 2019.

\bibitem[Goldt et~al.(2022)Goldt, Loureiro, Reeves, Krzakala, Mezard, and
  Zdeborova]{goldt2020gaussian}
Sebastian Goldt, Bruno Loureiro, Galen Reeves, Florent Krzakala, Marc Mezard,
  and Lenka Zdeborova.
\newblock The gaussian equivalence of generative models for learning with
  shallow neural networks.
\newblock In Joan Bruna, Jan Hesthaven, and Lenka Zdeborova, editors,
  \emph{Proceedings of the 2nd Mathematical and Scientific Machine Learning
  Conference}, volume 145 of \emph{Proceedings of Machine Learning Research},
  pages 426--471. PMLR, 16--19 Aug 2022.

\bibitem[Goodfellow et~al.(2014)Goodfellow, Pouget-Abadie, Mirza, Xu,
  Warde-Farley, Ozair, Courville, and Bengio]{goodfellow2014generative}
Ian Goodfellow, Jean Pouget-Abadie, Mehdi Mirza, Bing Xu, David Warde-Farley,
  Sherjil Ozair, Aaron Courville, and Yoshua Bengio.
\newblock Generative adversarial nets.
\newblock In Z.~Ghahramani, M.~Welling, C.~Cortes, N.~Lawrence, and K.~Q.
  Weinberger, editors, \emph{Advances in Neural Information Processing
  Systems}, volume~27. Curran Associates, Inc., 2014.

\bibitem[Guerra(2003)]{guerra2003broken}
Francesco Guerra.
\newblock Broken replica symmetry bounds in the mean field spin glass model.
\newblock \emph{Communications in mathematical physics}, 233\penalty0
  (1):\penalty0 1--12, 2003.

\bibitem[Hu and Lu(2022)]{hu2020universality}
Hong Hu and Yue~M. Lu.
\newblock Universality laws for high-dimensional learning with random features.
\newblock \emph{IEEE Transactions on Information Theory}, pages 1--1, 2022.
\newblock \doi{10.1109/TIT.2022.3217698}.

\bibitem[Jacot et~al.(2018)Jacot, Gabriel, and Hongler]{jacot2018neural}
Arthur Jacot, Franck Gabriel, and Clement Hongler.
\newblock Neural tangent kernel: Convergence and generalization in neural
  networks.
\newblock In S.~Bengio, H.~Wallach, H.~Larochelle, K.~Grauman, N.~Cesa-Bianchi,
  and R.~Garnett, editors, \emph{Advances in Neural Information Processing
  Systems}, volume~31. Curran Associates, Inc., 2018.

\bibitem[Kingma and Welling(2013)]{kingma2013auto}
Diederik~P Kingma and Max Welling.
\newblock Auto-encoding variational bayes.
\newblock \emph{arXiv: 1312.6114}, 2013.
\newblock \doi{10.48550/ARXIV.1312.6114}.

\bibitem[Kini and Thrampoulidis(2021)]{kini2021phase}
Ganesh~Ramachandra Kini and Christos Thrampoulidis.
\newblock Phase transitions for one-vs-one and one-vs-all linear separability
  in multiclass gaussian mixtures.
\newblock In \emph{ICASSP 2021-2021 IEEE International Conference on Acoustics,
  Speech and Signal Processing (ICASSP)}, pages 4020--4024. IEEE, 2021.

\bibitem[Korada and Montanari(2011)]{5730603}
Satish~Babu Korada and Andrea Montanari.
\newblock Applications of the lindeberg principle in communications and
  statistical learning.
\newblock \emph{IEEE Transactions on Information Theory}, 57\penalty0
  (4):\penalty0 2440--2450, 2011.
\newblock \doi{10.1109/TIT.2011.2112231}.

\bibitem[Kramer(1991)]{kramer_1991_nonlinear}
Mark~A. Kramer.
\newblock Nonlinear principal component analysis using autoassociative neural
  networks.
\newblock \emph{AIChE Journal}, 37\penalty0 (2):\penalty0 233--243, 1991.
\newblock ISSN 1547-5905.
\newblock \doi{10.1002/aic.690370209}.

\bibitem[Lakshminarayanan et~al.(2017)Lakshminarayanan, Pritzel, and
  Blundell]{NIPS2017_9ef2ed4b}
Balaji Lakshminarayanan, Alexander Pritzel, and Charles Blundell.
\newblock Simple and scalable predictive uncertainty estimation using deep
  ensembles.
\newblock In I.~Guyon, U.~Von Luxburg, S.~Bengio, H.~Wallach, R.~Fergus,
  S.~Vishwanathan, and R.~Garnett, editors, \emph{Advances in Neural
  Information Processing Systems}, volume~30. Curran Associates, Inc., 2017.

\bibitem[Ledoux(2001)]{ledoux_2001_concentration}
Michel Ledoux.
\newblock \emph{The concentration of measure phenomenon}, volume~89 of
  \emph{Math. {Surv}. {Monogr}.}
\newblock American Mathematical Society (AMS), Providence, RI, 2001.
\newblock ISBN 9780821828649.

\bibitem[Lindeberg(1922)]{lindeberg_1922_eine}
J.~W. Lindeberg.
\newblock Eine neue {Herleitung} des {Exponentialgesetzes} in der
  {Wahrscheinlichkeitsrechnung}.
\newblock \emph{Mathematische Zeitschrift}, 15\penalty0 (1):\penalty0 211--225,
  December 1922.
\newblock ISSN 1432-1823.
\newblock \doi{10.1007/BF01494395}.

\bibitem[Louart et~al.(2018)Louart, Liao, and Couillet]{louart2018random}
Cosme Louart, Zhenyu Liao, and Romain Couillet.
\newblock A random matrix approach to neural networks.
\newblock \emph{The Annals of Applied Probability}, 28\penalty0 (2):\penalty0
  1190--1248, 2018.

\bibitem[Loureiro et~al.(2021{\natexlab{a}})Loureiro, Gerbelot, Cui, Goldt,
  Krzakala, Mezard, and Zdeborov{\'a}]{loureiro2021learning}
Bruno Loureiro, Cedric Gerbelot, Hugo Cui, Sebastian Goldt, Florent Krzakala,
  Marc Mezard, and Lenka Zdeborov{\'a}.
\newblock Learning curves of generic features maps for realistic datasets with
  a teacher-student model.
\newblock \emph{Advances in Neural Information Processing Systems}, 34,
  2021{\natexlab{a}}.

\bibitem[Loureiro et~al.(2021{\natexlab{b}})Loureiro, Sicuro, Gerbelot, Pacco,
  Krzakala, and Zdeborov\'{a}]{loureiro2021learning_gm}
Bruno Loureiro, Gabriele Sicuro, Cedric Gerbelot, Alessandro Pacco, Florent
  Krzakala, and Lenka Zdeborov\'{a}.
\newblock Learning gaussian mixtures with generalized linear models: Precise
  asymptotics in high-dimensions.
\newblock In M.~Ranzato, A.~Beygelzimer, Y.~Dauphin, P.S. Liang, and J.~Wortman
  Vaughan, editors, \emph{Advances in Neural Information Processing Systems},
  volume~34, pages 10144--10157. Curran Associates, Inc., 2021{\natexlab{b}}.

\bibitem[Loureiro et~al.(2022)Loureiro, Gerbelot, Refinetti, Sicuro, and
  Krzakala]{loureiro2022fluc}
Bruno Loureiro, Cedric Gerbelot, Maria Refinetti, Gabriele Sicuro, and Florent
  Krzakala.
\newblock Fluctuations, bias, variance \& ensemble of learners: Exact
  asymptotics for convex losses in high-dimension.
\newblock In Kamalika Chaudhuri, Stefanie Jegelka, Le~Song, Csaba Szepesvari,
  Gang Niu, and Sivan Sabato, editors, \emph{Proceedings of the 39th
  International Conference on Machine Learning}, volume 162 of
  \emph{Proceedings of Machine Learning Research}, pages 14283--14314. PMLR,
  17--23 Jul 2022.

\bibitem[Lu and Yau(2022)]{Lu2022}
Yue~M. Lu and Horng-Tzer Yau.
\newblock An equivalence principle for the spectrum of random inner-product
  kernel matrices.
\newblock \emph{arXiv: 2205.06308}, 2022.
\newblock \doi{10.48550/ARXIV.2205.06308}.

\bibitem[Mai and Liao(2019)]{mai2019high}
Xiaoyi Mai and Zhenyu Liao.
\newblock High dimensional classification via empirical risk minimization:
  Improvements and optimality.
\newblock \emph{arXiv: 1905.13742}, 2019.

\bibitem[Mignacco et~al.(2020)Mignacco, Krzakala, Lu, Urbani, and
  Zdeborova]{mignacco2020role}
Francesca Mignacco, Florent Krzakala, Yue Lu, Pierfrancesco Urbani, and Lenka
  Zdeborova.
\newblock The role of regularization in classification of high-dimensional
  noisy gaussian mixture.
\newblock In \emph{International Conference on Machine Learning}, pages
  6874--6883. PMLR, 2020.

\bibitem[Mirza and Osindero(2014)]{Mirza2014}
Mehdi Mirza and Simon Osindero.
\newblock Conditional generative adversarial nets.
\newblock \emph{arXiv: 1411.1784}, 2014.
\newblock \doi{10.48550/ARXIV.1411.1784}.

\bibitem[Misiakiewicz(2022)]{Misiakiewicz2022}
Theodor Misiakiewicz.
\newblock Spectrum of inner-product kernel matrices in the polynomial regime
  and multiple descent phenomenon in kernel ridge regression.
\newblock \emph{arXiv: 2204.10425}, 2022.
\newblock \doi{10.48550/ARXIV.2204.10425}.

\bibitem[Miyato et~al.(2018)Miyato, Kataoka, Koyama, and
  Yoshida]{miyato_2018_spectral}
Takeru Miyato, Toshiki Kataoka, Masanori Koyama, and Yuichi Yoshida.
\newblock Spectral normalization for generative adversarial networks.
\newblock In \emph{International Conference on Learning Representations}, 2018.

\bibitem[Monajemi et~al.(2013)Monajemi, Jafarpour, Gavish, null null, Donoho,
  Ambikasaran, Bacallado, Bharadia, Chen, Choi, Chowdhury, Chowdhury, Damle,
  Fithian, Goetz, Grosenick, Gross, Hills, Hornstein, Lakkam, Lee, Li, Liu,
  Sing-Long, Marx, Mittal, Monajemi, No, Omrani, Pekelis, Qin, Raines, Ryu,
  Saxe, Shi, Siilats, Strauss, Tang, Wang, Zhou, and Zhu]{Monajemi2013}
Hatef Monajemi, Sina Jafarpour, Matan Gavish, null null, David~L. Donoho,
  Sivaram Ambikasaran, Sergio Bacallado, Dinesh Bharadia, Yuxin Chen, Young
  Choi, Mainak Chowdhury, Soham Chowdhury, Anil Damle, Will Fithian, Georges
  Goetz, Logan Grosenick, Sam Gross, Gage Hills, Michael Hornstein, Milinda
  Lakkam, Jason Lee, Jian Li, Linxi Liu, Carlos Sing-Long, Mike Marx, Akshay
  Mittal, Hatef Monajemi, Albert No, Reza Omrani, Leonid Pekelis, Junjie Qin,
  Kevin Raines, Ernest Ryu, Andrew Saxe, Dai Shi, Keith Siilats, David Strauss,
  Gary Tang, Chaojun Wang, Zoey Zhou, and Zhen Zhu.
\newblock Deterministic matrices matching the compressed sensing phase
  transitions of gaussian random matrices.
\newblock \emph{Proceedings of the National Academy of Sciences}, 110\penalty0
  (4):\penalty0 1181--1186, 2013.
\newblock \doi{10.1073/pnas.1219540110}.

\bibitem[Montanari and Nguyen(2017)]{8006947}
Andrea Montanari and Phan-Minh Nguyen.
\newblock Universality of the elastic net error.
\newblock In \emph{2017 IEEE International Symposium on Information Theory
  (ISIT)}, pages 2338--2342, 2017.
\newblock \doi{10.1109/ISIT.2017.8006947}.

\bibitem[Montanari and Saeed(2022)]{montanari2022universality}
Andrea Montanari and Basil~N. Saeed.
\newblock Universality of empirical risk minimization.
\newblock In Po-Ling Loh and Maxim Raginsky, editors, \emph{Proceedings of
  Thirty Fifth Conference on Learning Theory}, volume 178 of \emph{Proceedings
  of Machine Learning Research}, pages 4310--4312. PMLR, 02--05 Jul 2022.

\bibitem[Montanari et~al.(2019)Montanari, Ruan, Sohn, and
  Yan]{montanari2019generalization}
Andrea Montanari, Feng Ruan, Youngtak Sohn, and Jun Yan.
\newblock The generalization error of max-margin linear classifiers:
  High-dimensional asymptotics in the overparametrized regime.
\newblock \emph{arXiv: 1911.01544}, 2019.

\bibitem[Panahi and Hassibi(2017)]{NIPS2017_136f9513}
Ashkan Panahi and Babak Hassibi.
\newblock A universal analysis of large-scale regularized least squares
  solutions.
\newblock In I.~Guyon, U.~Von Luxburg, S.~Bengio, H.~Wallach, R.~Fergus,
  S.~Vishwanathan, and R.~Garnett, editors, \emph{Advances in Neural
  Information Processing Systems}, volume~30. Curran Associates, Inc., 2017.

\bibitem[Paszke et~al.(2019)Paszke, Gross, Massa, Lerer, Bradbury, Chanan,
  Killeen, Lin, Gimelshein, Antiga, Desmaison, Kopf, Yang, DeVito, Raison,
  Tejani, Chilamkurthy, Steiner, Fang, Bai, and Chintala]{NEURIPS2019_9015}
Adam Paszke, Sam Gross, Francisco Massa, Adam Lerer, James Bradbury, Gregory
  Chanan, Trevor Killeen, Zeming Lin, Natalia Gimelshein, Luca Antiga, Alban
  Desmaison, Andreas Kopf, Edward Yang, Zachary DeVito, Martin Raison, Alykhan
  Tejani, Sasank Chilamkurthy, Benoit Steiner, Lu~Fang, Junjie Bai, and Soumith
  Chintala.
\newblock Pytorch: An imperative style, high-performance deep learning library.
\newblock In \emph{Advances in Neural Information Processing Systems 32}, pages
  8024--8035. Curran Associates, Inc., 2019.

\bibitem[Pearson(1901)]{pearson_1901_lines}
Karl Pearson.
\newblock {On} lines and planes of closest fit to systems of points in space.
\newblock \emph{The London, Edinburgh, and Dublin Philosophical Magazine and
  Journal of Science}, 2\penalty0 (11):\penalty0 559--572, November 1901.
\newblock ISSN 1941-5982.
\newblock \doi{10.1080/14786440109462720}.

\bibitem[Rahimi and Recht(2007)]{rahimi2007random}
Ali Rahimi and Benjamin Recht.
\newblock Random features for large-scale kernel machines.
\newblock In J.~Platt, D.~Koller, Y.~Singer, and S.~Roweis, editors,
  \emph{Advances in Neural Information Processing Systems}, volume~20. Curran
  Associates, Inc., 2007.

\bibitem[Refinetti et~al.(2021)Refinetti, Goldt, Krzakala, and
  Zdeborov{\'a}]{refinetti2021classifying}
Maria Refinetti, Sebastian Goldt, Florent Krzakala, and Lenka Zdeborov{\'a}.
\newblock Classifying high-dimensional gaussian mixtures: Where kernel methods
  fail and neural networks succeed.
\newblock In \emph{International Conference on Machine Learning}, pages
  8936--8947. PMLR, 2021.

\bibitem[Rezende and Mohamed(2015)]{rezende_2015_variational}
Danilo Rezende and Shakir Mohamed.
\newblock Variational inference with normalizing flows.
\newblock In Francis Bach and David Blei, editors, \emph{Proceedings of the
  32nd International Conference on Machine Learning}, volume~37 of
  \emph{Proceedings of Machine Learning Research}, pages 1530--1538, Lille,
  France, 07--09 Jul 2015. PMLR.

\bibitem[Rockafellar(1970)]{rockafellar1970convex}
R~Tyrrell Rockafellar.
\newblock \emph{Convex analysis}, volume~18.
\newblock Princeton university press, 1970.

\bibitem[Schröder et~al.(2023)Schröder, Cui, Dmitriev, and
  Loureiro]{Schroder2023}
Dominik Schröder, Hugo Cui, Daniil Dmitriev, and Bruno Loureiro.
\newblock Deterministic equivalent and error universality of deep random
  features learning.
\newblock \emph{arXiv: 2302.00401}, 2023.
\newblock \doi{10.48550/ARXIV.2302.00401}.

\bibitem[Seddik et~al.(2020)Seddik, Louart, Tamaazousti, and
  Couillet]{seddik_2020_random}
Mohamed El~Amine Seddik, Cosme Louart, Mohamed Tamaazousti, and Romain
  Couillet.
\newblock Random matrix theory proves that deep learning representations of
  {GAN}-data behave as {Gaussian} mixtures.
\newblock In \emph{Proceedings of the 37th {International} {Conference} on
  {Machine} {Learning}}, {ICML}'20, pages 8573--8582. JMLR.org, July 2020.

\bibitem[Seddik et~al.(2021)Seddik, Louart, Couillet, and
  Tamaazousti]{seddik_2021_unexpected}
Mohamed El~Amine Seddik, Cosme Louart, Romain Couillet, and Mohamed
  Tamaazousti.
\newblock The unexpected deterministic and universal behavior of large softmax
  classifiers.
\newblock In \emph{International {Conference} on {Artificial} {Intelligence}
  and {Statistics}}, pages 1045--1053. PMLR, 2021.

\bibitem[Spigler et~al.(2020)Spigler, Geiger, and Wyart]{spigler2019asymptotic}
Stefano Spigler, Mario Geiger, and Matthieu Wyart.
\newblock Asymptotic learning curves of kernel methods: empirical data versus
  teacher-student paradigm.
\newblock \emph{Journal of Statistical Mechanics: Theory and Experiment},
  2020\penalty0 (12):\penalty0 124001, dec 2020.
\newblock \doi{10.1088/1742-5468/abc61d}.

\bibitem[Taheri et~al.(2020)Taheri, Pedarsani, and
  Thrampoulidis]{taheri2020optimality}
Hossein Taheri, Ramtin Pedarsani, and Christos Thrampoulidis.
\newblock Optimality of least-squares for classification in gaussian-mixture
  models.
\newblock In \emph{2020 IEEE International Symposium on Information Theory
  (ISIT)}, pages 2515--2520. IEEE, 2020.

\bibitem[Tao and Vu(2011)]{Tao2011}
Terence Tao and Van Vu.
\newblock {Random matrices: Universality of local eigenvalue statistics}.
\newblock \emph{Acta Mathematica}, 206\penalty0 (1):\penalty0 127 -- 204, 2011.
\newblock \doi{10.1007/s11511-011-0061-3}.

\bibitem[Tao and Vu(2012)]{Tao2012}
Terence Tao and Van Vu.
\newblock Random matrices: universal properties of eigenvectors.
\newblock \emph{Random Matrices: Theory and Applications}, 01\penalty0
  (01):\penalty0 1150001, 2012.
\newblock \doi{10.1142/S2010326311500018}.

\bibitem[Virtanen et~al.(2020)Virtanen, Gommers, Oliphant, Haberland, Reddy,
  Cournapeau, Burovski, Peterson, Weckesser, Bright, {van der Walt}, Brett,
  Wilson, Millman, Mayorov, Nelson, Jones, Kern, Larson, Carey, Polat, Feng,
  Moore, {VanderPlas}, Laxalde, Perktold, Cimrman, Henriksen, Quintero, Harris,
  Archibald, Ribeiro, Pedregosa, {van Mulbregt}, and {SciPy 1.0
  Contributors}]{2020SciPy-NMeth}
Pauli Virtanen, Ralf Gommers, Travis~E. Oliphant, Matt Haberland, Tyler Reddy,
  David Cournapeau, Evgeni Burovski, Pearu Peterson, Warren Weckesser, Jonathan
  Bright, St{\'e}fan~J. {van der Walt}, Matthew Brett, Joshua Wilson, K.~Jarrod
  Millman, Nikolay Mayorov, Andrew R.~J. Nelson, Eric Jones, Robert Kern, Eric
  Larson, C~J Carey, {\.I}lhan Polat, Yu~Feng, Eric~W. Moore, Jake
  {VanderPlas}, Denis Laxalde, Josef Perktold, Robert Cimrman, Ian Henriksen,
  E.~A. Quintero, Charles~R. Harris, Anne~M. Archibald, Ant{\^o}nio~H. Ribeiro,
  Fabian Pedregosa, Paul {van Mulbregt}, and {SciPy 1.0 Contributors}.
\newblock {{SciPy} 1.0: Fundamental Algorithms for Scientific Computing in
  Python}.
\newblock \emph{Nature Methods}, 17:\penalty0 261--272, 2020.
\newblock \doi{10.1038/s41592-019-0686-2}.

\bibitem[Wang and Thrampoulidis(2021)]{wang2021benign}
Ke~Wang and Christos Thrampoulidis.
\newblock Benign overfitting in binary classification of gaussian mixtures.
\newblock In \emph{ICASSP 2021-2021 IEEE International Conference on Acoustics,
  Speech and Signal Processing (ICASSP)}, pages 4030--4034. IEEE, 2021.

\bibitem[Xiao et~al.(2017)Xiao, Rasul, and Vollgraf]{xiao2017}
Han Xiao, Kashif Rasul, and Roland Vollgraf.
\newblock Fashion-mnist: a novel image dataset for benchmarking machine
  learning algorithms.
\newblock \emph{arXiv: 1708.07747}, 2017.
\newblock \doi{10.48550/ARXIV.1708.07747}.

\end{thebibliography}
\clearpage

\newpage
\appendix
\section{Proofs of Main Results}\label{app:gaussian_universality}
\renewcommand{\theassump}{A\arabic{assump}}
\setcounter{assump}{0}

\subsection{Notation}
We follow the setting defined in section \ref{sec:set}. Throughout, we work in the so-called proportional high-dimensional limit, where $n, p$ go to infinity with
\[\frac np \to \alpha > 0, \]
while $\cC$ stays fixed.

Throughout this section, $\norm{}$ will denote the spectral norm of a matrix, while $\norm{}_q$ for $q > 0$ will refer to the element-wise $q$-norms. For a subgaussian random variable $Y$, its sub-gaussian norm $\norm{Y}_{\psi_2}$ is defined as
\[ \norm{Y}_{\psi_2} = \inf \Set*{t > 0 \given \Ea{\exp\left(\frac{Y^2}{t^2}\right)} \leq 2}. \]

\addtocontents{toc}{\protect\setcounter{tocdepth}{1}}
\subsection{State of the art}
\addtocontents{toc}{\protect\setcounter{tocdepth}{2}}

There have been many recent progress on Gaussian-type low-dimensional CLT and universality recently \cite{goldt2020gaussian,hu2020universality,montanari2022universality}. We shall leverage on these results to prove our first theorem.

In particular, the starting point for our mathematical proof will use the recent result of \cite{montanari2022universality} which we shall now review. Consider the minimization problem
\eqref{def:min_problem_committee}, with $(\bm x_\mu, y_\mu)$ i.i.d random variables; the goal is to replace the $\bm x_\mu$ by their Gaussian equivalent model
\begin{equation} 
    \bm g_i \overset{i.i.d}{\sim} \cN(\bmu, \bSigma) \qquad \text{where}\qquad\bmu = \Ea{\bx},\quad \bSigma = \Ea{\bm x\bm x^\top}.
\end{equation}
\cite{montanari2022universality} make the following assumptions:

\begin{assump}[Loss and regularization]\label{assump:app:loss}
    The loss function $\ell : \dR^{k+1} \to \dR$ is nonnegative and Lipschitz, and the regularization function $r : \dR^{p\times k} \to \dR$ is locally Lipschitz, with constants independent from $p$.
\end{assump}

\begin{assump}[Concentration on the directions of $\cS_p$]\label{assump:app:concentration}
    We have
    \begin{equation}
    \sup_{\bm \theta \in \cS_p, \norm{\bm \theta}_2 \leq 1} \norm{\bm \theta^\top \bm x}_{\psi_2} \leq M, \quad \sup_{\bm \theta \in \cS_p, \norm{\bm \theta}_2 \leq 1} \norm{\bSigma^{1/2}\bm \theta}_{2} \leq M, \quand \norm{\bmu}_2 \leq M
    \end{equation}
    for some constant $M > 0$.
\end{assump}

\begin{assump}[One-dimensional CLT]\label{assump:app:clt}
    For any bounded Lipschitz function $\varphi: \dR^k \to \dR$,
    \begin{equation}
        \lim_{p\to \infty} \sup_{\btheta\in\cS_p} \left\lvert\Ea{\phi(\btheta^\top \bm x)} - \Ea{\phi(\btheta^\top \bm g)}\right\rvert|=0.
    \end{equation}
\end{assump}

\begin{assump}[Labels]\label{assump:app:labels}
    The $y_\mu$ are generated according to
    \begin{equation}\label{eq:label_definition_montanari}
    y_i = \eta(\bm \Theta^* \bm x_i, \eps_i, c_i),
    \end{equation}
    where $\eta : \dR^{k^*+1}\to \dR$ is a Lipschitz function, $\bm \Theta^* \in \cS_p^{k^*}$, and the $\eps_\mu$ are i.i.d subgaussian random variables with
    \[ \norm{\eps_i}_{\psi_2} \leq M \]
    for some constant $M > 0$.
\end{assump}

Building on those assumptions, \cite{montanari2022universality} prove the following:
\begin{theorem}[Theorem 1. in \cite{montanari2022universality}] \label{thm:montanari_universality}
Suppose that Assumptions \ref{assump:app:loss}-\ref{assump:app:clt} hold. Then, for any bounded Lipschitz function $\Phi: \dR \to \dR$, we have
\[\lim_{n, p \to \infty} \left| \Ea{\Phi\left(\widehat\cR_n^\star(\bm X, \bm y(\bm X))\right)} - \Ea{\Phi\left(\widehat\cR_n^\star(\bm G, \bm y(\bm G))\right)} \right| = 0\]
In particular, for any $\rho \in \dR$,
\[ \widehat\cR_n^\star(\bm X, \bm y(\bm X)) \overset{\dP}{\longrightarrow} \rho \quad \text{if and only if} \quad \widehat\cR_n^\star(\bm G, \bm y(\bm G)) \overset{\dP}{\longrightarrow} \rho \]
\end{theorem}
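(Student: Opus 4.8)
The plan is to route the universality of the minimal risk $\hR_n^\star$ through the universality of a Gibbs free energy, and then to deform the non-Gaussian design into its Gaussian equivalent by an interpolation argument. First I would introduce, for each inverse temperature $\beta > 0$, the free energy
\[
F_{\beta,n}(\bX) = -\frac{1}{\beta n}\log\int_{\cS_p} e^{-\beta n\,\hR_n(\bTheta;\bX,\by(\bX))}\,d\mu_p(\bTheta),
\]
with $\mu_p$ a fixed reference measure on the compact set $\cS_p$. Since $\ell$ is Lipschitz (Assumption A1) and $\cS_p$ is compact (Assumption A2), a Laplace-type estimate gives $0 \le F_{\beta,n}(\bX) - \hR_n^\star(\bX,\by(\bX))$, bounded by a term of order $(\beta n)^{-1}$ times the log-covering number of $\cS_p$; in the proportional regime this is $O(1/\beta)$ once the net/discretization is handled exactly as in Lemma 1 of \cite{montanari2022universality}. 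Hence it suffices to prove universality of $F_{\beta,n}$ for each fixed $\beta$ and then send $\beta\to\infty$ after $n,p\to\infty$, reducing the claim to the free-energy statement of Theorem \ref{thm:single_univ_free}.

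To establish free-energy universality I would exploit the structural fact that $\hR_n$ depends on each sample $\bx_i$ only through the finite-dimensional pre-activation pair $(\bTheta^\top \bx_i, \bm\Theta^{\star\top}\bx_i)\in\dR^{k+k^\star}$ together with $(c_i,\eps_i)$. I would then define, sample by sample and keeping $(c_i,\eps_i)$ frozen, the interpolation
\[
\bm u_i(t) = \bmu_{c_i} + \sqrt{1-t}\,(\bx_i - \bmu_{c_i}) + \sqrt{t}\,(\bg_i - \bmu_{c_i}), \qquad t\in[0,1],
\]
which preserves the conditional mean $\bmu_{c_i}$ and conditional covariance $\bSigma_{c_i}$ for all $t$ and satisfies $\bm u_i(0)=\bx_i$, $\bm u_i(1)=\bg_i$; the labels are recomputed along the path as $y_i(t)=\eta(\bm\Theta^{\star\top}\bm u_i(t),\eps_i,c_i)$, which is admissible because $\eta$ is Lipschitz (label assumption). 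Writing $\psi(t)=\Ea{\Phi(F_{\beta,n}(\bm u(t)))}$, the goal becomes $|\psi(1)-\psi(0)| = o(1)$.

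The heart of the argument is to bound $\psi'(t)$. Differentiating and using that the loss sees $\bm u_i(t)$ only through the two projections, $\tfrac{d}{dt}F_{\beta,n}$ is a Gibbs-and-noise average of $\tfrac1n\sum_i \nabla\ell$ contracted with $\dot{\bm u}_i(t)$, where $\dot{\bm u}_i(t)$ carries $(\bg_i-\bmu_{c_i})$ with weight $\tfrac12 t^{-1/2}$ and $(\bx_i-\bmu_{c_i})$ with weight $-\tfrac12(1-t)^{-1/2}$. On the Gaussian block I would apply Gaussian integration by parts, converting the linear factor $(\bg_i-\bmu_{c_i})$ into second-derivative terms weighted by $\bSigma_{c_i}$, i.e. depending on the design only through the matched first two conditional moments. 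Each resulting contribution is, up to errors absorbed by the sub-Gaussian concentration of Assumption A2, a difference of low-dimensional expectations of a bounded-Lipschitz function of the projection $(\bTheta^\top\bm u_i, \bm\Theta^{\star\top}\bm u_i)$ taken under the mixture versus its Gaussian equivalent. The one-dimensional CLT of Assumption A3 forces each such per-sample difference to be $o(1)$; since each enters with prefactor $1/n$ and the weights $t^{\pm 1/2}$ are integrable on $[0,1]$, summing over the $n$ samples keeps $\int_0^1 \psi'(t)\,dt$ at $o(1)$.

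The main obstacle, and the reason the free-energy detour is needed, is that the Gibbs measure, hence the \emph{effective} $\bTheta$ at which the loss derivatives are evaluated, depends on the very data being deformed, so the one-dimensional CLT cannot be invoked for a fixed direction. This is precisely why Assumption A3 is stated uniformly over $\cS_p$ (the supremum over $\btheta$): the uniformity lets me pass the CLT bound through the data-dependent Gibbs average, decoupling the estimator from the design. This is the Guerra/Lindeberg-interpolation mechanism, and controlling the uniform remainder over $\cS_p$ together with the $\beta\to\infty$ net argument is where essentially all of the work lies. Finally, the stated ``if and only if'' is routine: convergence in probability to a constant $\rho$ is equivalent to $\Ea{\Phi(\cdot)}\to\Phi(\rho)$ for every bounded Lipschitz $\Phi$, so the vanishing of $\lvert\Ea{\Phi(\hR_n^\star(\bX))}-\Ea{\Phi(\hR_n^\star(\bG))}\rvert$ transfers the limit $\rho$ from one model to the other in both directions.
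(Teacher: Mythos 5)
Your proposal is correct and follows essentially the same route as the paper: reduce $\hR_n^\star$ to a discretized free energy via an $\epsilon$-net Laplace estimate, then deform $\bX$ into $\bG$ along a covariance-preserving interpolation path and kill the derivative using the uniform one-dimensional CLT of Assumption \ref{assump:app:clt}. The only differences are cosmetic: you use the Lindeberg-style $\sqrt{1-t},\sqrt{t}$ weights where the paper uses $\cos(t),\sin(t)$ (equivalent up to reparametrization, both giving $\Ea{\dot{\bm u}_i^\top \bm u_i}=0$), and you execute the Gaussian step by integration by parts where the paper invokes the zero-covariance-implies-independence argument for the jointly Gaussian pair.
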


\paragraph{Free energy approximation} 

A crucial component of the proof in \cite{montanari2022universality} is the approximation of the minimizer through a free energy function. 
Define the discretized free energy
\begin{equation}
    f_{\epsilon, \beta}(\bm X) = \frac1{n\beta}\sum_{\bm\Theta\in\cN_\epsilon^k} \exp\left( -\beta\,  \widehat\cR_n(\bm\Theta ; \bm X, \bm y(\bm X) ) \right),
\end{equation}
where $\cN_\epsilon$ is a minimal $\epsilon$-net of $\cS_p$.
\begin{lemma}[Lemma 1 in \cite{montanari2022universality}]\label{lem:montanari_free_energy}
 For any bounded differentiable function $\Phi$ with bounded Lipschitz derivative, and any $\epsilon >0$ we have:
\begin{equation}
\nonumber
 \lim_{n, p\to\infty} \left|\Ea{\Phi\left(f_{\epsilon,\beta}(\bX)\right)} - 
\Ea{\Phi\left(f_{\epsilon,\beta}(\bG)\right)}\right| = 0.
\end{equation}
\end{lemma}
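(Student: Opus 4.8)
The plan is to prove this free-energy universality by a Lindeberg-type swapping argument, with the one-dimensional CLT of Assumption~\ref{assump:app:clt} supplying the per-swap cancellation. Order the samples and introduce the hybrid datasets $\bX^{(j)}=(\bg_1,\dots,\bg_j,\bx_{j+1},\dots,\bx_n)$, so that $\bX^{(0)}=\bX$ and $\bX^{(n)}=\bG$. Telescoping gives $\Ea{\Phi(f_{\epsilon,\beta}(\bX))}-\Ea{\Phi(f_{\epsilon,\beta}(\bG))}=\sum_{j=1}^n\big(\Ea{\Phi(f_{\epsilon,\beta}(\bX^{(j-1)}))}-\Ea{\Phi(f_{\epsilon,\beta}(\bX^{(j)}))}\big)$, where consecutive hybrids differ only in the single sample $\bx_j$ versus $\bg_j$. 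Since the samples are i.i.d., it then suffices to bound one generic swap by $o(1/n)$, uniformly in $j$, and conclude by summing the $n$ contributions.

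For a fixed swap I would isolate the dependence on sample $j$ through the cavity decomposition $\widehat\cR_n(\bTheta)=\tfrac1n\ell(\bTheta^\top\bx_j,y_j)+\widehat\cR_n^{\setminus j}(\bTheta)$. The partition function then factorizes as a cavity partition function times the single-sample weight $e^{-\frac{\beta}{n}\ell(\bTheta^\top\bx_j,y_j)}$, which is a $1+O(1/n)$ perturbation. Expanding both this weight and $\Phi$ to second order in $1/n$ produces, at each order, the \emph{cavity} Gibbs average $\lin{\,\cdot\,}_{\setminus j}$ of powers of $\ell(\bTheta^\top\bx_j,y_j)$. The decisive structural point is that the cavity Gibbs measure is a function of $(\bx_i,y_i)_{i\neq j}$ only, hence \textbf{independent} of $\bx_j$; therefore taking the expectation over $\bx_j$ commutes with the cavity average, and each term reduces to an integral, against the cavity measure on $\bTheta\in\cS_p$, of genuinely low-dimensional expectations of the form $\Ea{\ell(\bTheta^\top\bx_j,\eta(\bTheta_\star^\top\bx_j,\eps_j,c_j))\mid c_j=c}$ and its square.

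Each such expectation depends on $\bx_j$ only through the joint projection $(\bTheta^\top\bx_j,\bTheta_\star^\top\bx_j)$, a vector of bounded dimension obtained by stacking $\bTheta$ and $\bTheta_\star$. Applying Assumption~\ref{assump:app:clt} (conditionally on the class, then averaging over $\rho_c$) to the bounded-Lipschitz map $(\bu,\bv)\mapsto\ell(\bu,\eta(\bv,\eps_j,c))$ shows that these expectations agree for $\bx_j$ and $\bg_j$ up to $o(1)$, \emph{uniformly} over $\bTheta,\bTheta_\star\in\cS_p$. This uniformity is what lets me reinsert the estimate under the cavity integral without losing control, even though that measure ranges over the whole $\epsilon$-net. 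In particular, the moment matching built into the definition of the equivalent Gaussian is exactly what Assumption~\ref{assump:app:clt} upgrades to equality of all the relevant low-dimensional projection laws, so the first- and second-order swap contributions match up to $o(1/n)$.

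It then remains to control the third-order Taylor remainder uniformly. The boundedness of $\Phi'$ and its Lipschitz constant, the Lipschitzness of $\ell$ and $\eta$ (Assumptions~\ref{assump:app:loss} and~\ref{assump:app:labels}), and the subgaussian bounds on $\bTheta^\top\bx_j$ and $\eps_j$ (Assumption~\ref{assump:app:concentration}) yield a remainder of size $O(1/n^3)$ per swap with integrable tails, so that the $n$ swaps sum to $o(1)$. I expect the \textbf{main obstacle} to be precisely the step that makes the one-dimensional CLT applicable: a priori $f_{\epsilon,\beta}$ depends on sample $j$ through all $|\cN_\epsilon|$ directions of the net simultaneously, i.e.\ through a projection of diverging dimension, and it is only the cavity decoupling that collapses this into a Gibbs average of low-dimensional projections on which the uniform-in-$\bTheta$ CLT bites. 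A secondary, more technical difficulty is securing the per-swap bounds uniformly in $j$ and over $\cS_p$, which is exactly where the subgaussian concentration of Assumption~\ref{assump:app:concentration} is needed. An equivalent route would be a continuous Guerra-type interpolation in the fraction of Gaussian samples, differentiating the interpolated free energy and showing the derivative is $o(1)$ by the same cavity-plus-CLT computation.
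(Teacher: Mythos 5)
Your high-level strategy (replace one sample at a time, use the cavity/conditioning structure so that the one-dimensional CLT, uniform over $\cS_p$, can be applied to the swapped sample) is in the right family, and the ``equivalent route'' you mention in your last sentence — a continuous interpolation whose derivative is killed by the same cavity-plus-CLT computation — is in fact what the paper does: it uses the trigonometric path $\bm u_{t,i} = \bmu_{\sigma(i)} + \cos(t)(\bx_i - \bmu_{\sigma(i)}) + \sin(t)(\bg_i - \bmu_{\sigma(i)})$, for which $\Ea{(\dd\bm u_{t,i}/\dd t)^\top \bm u_{t,i}} = 0$ exactly, so that the $t$-derivative of the free energy vanishes asymptotically once the CLT lets one treat $\bx_i$ as Gaussian inside the relevant Gibbs averages.

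However, the main route you actually develop has a genuine gap at its quantitative core. The Gibbs weight is $\exp(-\beta n \widehat\cR_n(\bTheta))$ with $\widehat\cR_n = \tfrac1n\sum_i \ell(\bTheta^\top\bx_i,y_i) + r(\bTheta)$ (this normalization is forced by the use of the lemma: the free energy must concentrate on $\min_\bTheta \widehat\cR_n$ up to an entropy correction $O(\log|\cN_\epsilon|/(\beta n))$). Hence the single-sample factor you isolate is $e^{-\beta\,\ell(\bTheta^\top\bx_j,y_j)}$, which is $\Theta(1)$, \emph{not} $e^{-\frac{\beta}{n}\ell}=1+O(1/n)$; there is no $1/n$ small parameter in which to Taylor expand the weight, and consequently no second-order matching with an $O(1/n^3)$ remainder. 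The swap difference is
\begin{equation}
\nonumber
\frac{1}{n\beta}\left(\log \lin{e^{-\beta \ell(\bTheta^\top\bx_j,y_j)}}_{\setminus j} - \log \lin{e^{-\beta \ell(\bTheta^\top\bg_j,y_j)}}_{\setminus j}\right),
\end{equation}
and while the cavity average itself commutes with $\Ea{\cdot}$ over $\bx_j$ (so that $\Ea{\lin{e^{-\beta\ell}}_{\setminus j}}$ is controlled by Assumption \ref{assump:app:clt}), the logarithm sits outside the cavity average: the quantity inside the log depends on $\bx_j$ through all $|\cN_\epsilon|$ projections $\{\bTheta^\top\bx_j\}_{\bTheta\in\cN_\epsilon}$ simultaneously. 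This is exactly the obstruction you flag as the ``main obstacle,'' but your proposed resolution (the $1+O(1/n)$ expansion) does not overcome it. The fix — whether in a discrete swap or the paper's continuous interpolation — is to differentiate along a path in the single sample so that what appears is a \emph{ratio} of Gibbs sums (a single Gibbs average of $\ell'\cdot\bTheta^\top\dot{\bm u}$) rather than a logarithm of one; the exact decorrelation of $\dot{\bm u}_t$ and $\bm u_t$ then does the work that your second-order moment matching was meant to do, and the CLT is only ever applied to bounded-Lipschitz functions of finitely many projections. I would also note that the recentering by the class means $\bmu_{\sigma(i)}$ in the interpolation, and the conditioning on the class label to restore the i.i.d.\ structure within each class, are needed in the mixture setting and are absent from your swap bookkeeping.
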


Subsequently, using classical arguments from both the theory of $\epsilon$-nets and statistical physics, the authors show that
\begin{equation}\label{eq:free_energy_approx}
    \left|f_{\epsilon, \beta}(\bm X) - \widehat\cR_n(\bm\Theta ; \bm X, \bm y(\bm X) \right| \leq C_1(\epsilon) + \frac{C_2(\epsilon)}{\beta},
\end{equation}
and the same inequality holds for $\bm G$. Since $C_1, C_2$ do not depend on $n, p$, it is possible to choose first $\epsilon$, then $\beta$ so that the RHS of \eqref{eq:free_energy_approx} is as small as desired.

\cite{montanari2022universality} therefore used the universality of the free energy as an intermediate step wards proving the universality of the training error.
We generalize this result to the free energy defined for a general class of Boltzmann distributions, allowing us to prove universality in applications related to sampling.

\subsection{Sketch of proof of Lemma \ref{lem:montanari_free_energy}, adapted from \cite{montanari2022universality}}

\paragraph{Interpolation path}
For any $0 \leq t \leq \pi/2$, define
\[ \bm U_t = \cos(t)\bm X + \sin(t) \bm G\]
Then $\bm U_t$ is a smooth interpolation path with independent columns, ranging from $\bm U_0 = \bm X$ to $\bm U_{\pi/2} = \bm G$. We can write, for any differentiable function $\psi$,
\[ \left|\Ea{\psi(f_{\epsilon, \beta}(\bm X))} - \Ea{\psi(f_{\epsilon, \beta}(\bm G))} \right| \leq \int_0^{\pi/2} \left|\Ea{\frac{d \psi(f_{\epsilon, \beta}(\bm U_t)))}{dt}}\right|  dt,\]
and by the dominated convergence theorem it suffices to show that the integrand converges to $0$ for any $t$. The chain rule gives:
\begin{equation}\label{eq:chain_interp} 
\frac{d \psi(f_{\epsilon, \beta}(\bm U_t)))}{dt} = \psi'(f_{\epsilon, \beta}(\bm U_t)))\left( \sum_{\mu=1}^n \left(\frac{d\bm u_{t, \mu}}{dt}\right)^\top \nabla_{\bm u_{t, \mu}} f_{\epsilon, \beta}(\bm U_t) \right),
\end{equation}

and the dependency in $\psi$ can be easily controlled. Since all columns of $\bm U_t$ are i.i.d, we are left with showing
\begin{equation}\label{eq:interp_universality}
    \lim_{n, p \to \infty} n \dE_{(1)}\left[\left(\frac{d\bm u_{t, 1}}{dt}\right)^\top \nabla_{\bm u_{t, 1}} f_{\epsilon, \beta}(\bm U_t)\right] = 0 \quad \text{a.s.},
\end{equation}
where $\dE_{(1)}$ denotes the expectation with respect to $(\bm x_1, \bm g_1, \eps_1)$.

\paragraph{Showing \eqref{eq:interp_universality}}

Imagine for a moment that $\bm x_1$ is Gaussian; then $\bm u_{t, 1}$ and $d\bm u_{t, 1}/dt$ are also jointly Gaussian, and we have
\begin{align*}
\Ea{\left(\frac{d\bm u_{t, 1}}{dt}\right)^\top \bm u_{t, 1}} &= \Ea{(-\sin(t)\bm x_1 + \cos(t)\bm g_1)^\top (\cos(t)\bm x_1 + \sin(t)\bm g_1)} \\
&= 0,
\end{align*}
since $x_1$ and $g_1$ have the same covariance by definition. Therefore, they are independent, and we have
\begin{align*}
    \dE_{(1)}\left[\left(\frac{d\bm u_{t, 1}}{dt}\right)^\top \nabla_{\bm u_{t, 1}} f_{\epsilon, \beta}(\bm U_t)\right] = \dE_{(1)}\left[\left(\frac{d\bm u_{t, 1}}{dt}\right)\right]^\top \dE_{(1)}\left[\nabla_{\bm u_{t, 1}} f_{\epsilon, \beta}(\bm U_t)\right] = 0.
\end{align*}
On the other hand, it is possible to show that $\bm x_1$ only appears in \eqref{eq:interp_universality} through scalar products with $\bm \Theta$ or $\bm \Theta^*$. As a result, we can leverage Assumption $\ref{assump:clt}$ to replace $\bm x_1$ by a Gaussian vector $\bm w$ independent from $\bm g_1$ as $p \to \infty$. Then, the reasoning above can be repeated with $\bm w$ and $\bm g_1$ to conclude the proof.

\subsection{Proof of Theorem \ref{thm:main_gaussian_universality}}
In order to prove our theorem \ref{thm:main_gaussian_universality}, we now aim to adapt the proof from \cite{montanari2022universality} to the following case where the distribution of $x$ can be a {\it mixture} of several other distributions, each with different mean and covariance. For a discrete set $\cC$, we consider a family of distributions $(\nu_c)_{c\in \cC}$ on $\dR^p$, with means and covariances
\[ \bmu_c = \dE_{\zz\sim \nu_c}[\zz] \qquand \bSigma_c = \dE_{\zz\sim \nu_c}[\zz \zz^\top] \]
Given a type assignment $\sigma: [n] \to \cC$, each sample $x_i$ is then drawn independently from $\nu_{\sigma(i)}$. The equivalent Gaussian model is straightforward: we simply take
\[ \bm g_i \sim \cN(\bmu_{\sigma(i)}, \bSigma_{\sigma(i)}), \]
independently from each other. An important special case of this setting is when $\sigma$ is itself random, independently from the $\bm x_i$ and $\bm g_i$: the law of $\bm g_i$ is then a so-called Gaussian Mixture Model. Note that in the Gaussian mixture setting, the existence of the labeling function $\sigma$ implies that we coupled the labels for $\bX$ and $\bG$.

The main differences between our Assumptions \ref{assump:loss}-\ref{assump:clt} and Assumptions \ref{assump:app:loss}-\ref{assump:app:clt} are the following:
\begin{enumerate}
    \item Assumption \ref{assump:loss} is unchanged.
    \item We relax \eqref{eq:label_definition_montanari} in Assumption \ref{assump:labels} into 
    \[ y_i = \eta_{\sigma(i)}(\bm \Theta^* \bm x_i, \eps_i, c_i), \]
    when $\eta$ a Lipschitz function in its first two parameters. This allows in particular to incorporate classification problems in our setting, at no cost in the proof complexity.
      \item We assume a more general setup where the constraint set $\cS_p$ is not necessarily a product set. This slight generalization will be useful while proving a reduction to multiple objectives in Theorem \ref{thm:overlap_universality}. 
    
    \item We suppose that Assumptions \ref{assump:concentration} and \ref{assump:clt} hold for any possible distribution $\nu_c$ for $c \in \cC$ and its associated Gaussian equivalent model.
    \item We allow the reference measures to be any sequence of Borel measures with support on $\cS_p$, instead of only the Dirac measure on the $\epsilon$-net $\cN_\epsilon$.
\end{enumerate}
We now go through the proof of the previous section, highlighting the important changes.

\paragraph{Free energy approximation} This section goes basically unchanged; the approximation between $\widehat\cR_n^*(\bm X, \bm y(\bm X))$ and $f_{\epsilon, \beta}(\bm X)$ relies on Lipschitz arguments and concentration bounds on the $\bm x_i$ and $\bm g_i$, which are satisfied by our modification of Assumption \ref{assump:concentration}.

\paragraph{General Reference Measures} Our proof for the universality of free energy in Theorem \ref{thm:main_gaussian_universality} is a generalization of the proof of Lemma 1 in \cite{montanari2022universality}. 
Compactness of the supports ensures that the corresponding free energy:
\begin{equation}
     f_{\beta,n}(\bZ)= \int \exp\left(-\beta nR_n(\bTheta;\bZ,\by)\right)d\mu(\bTheta),
\end{equation}
is finite for any $\bZ$.

The corresponding Boltzmann measures can then be defined by 
setting the Radon-Nikodym derivative/density to be:
\begin{equation}
\frac{d\tilde{\mu}}{d\mu}=\exp\left(-\beta n\hR_n(\bTheta;\bX,\by)\right).
\end{equation}
The measure $\tilde{\mu}$ is then a Borel measure with support lying in $\cS_p$. Therefore, through dominated convergence theorem, we can interchange differentiation and expectations w.r.t $\mu$ in the proof of Lemma 1 in \cite{montanari2022universality}.
For instance, equation \eqref{eq:chain_interp} can be expressed as:
\begin{align}
\label{eq:derivative_explicit}
 \E\left[\frac{\partial}{\partial t} 
\psi(f_{\beta,n}(\bU_t))\right]
&= \E\left[\frac{\psi'(f_{\beta,n}(\bU_t))}{n}  \sum_{i=1}^n 
\frac{
\int\widetilde\bu_{t,i}^\top
 \widehat\bd_{t,i}(\bTheta) 
e^{-n\beta R_n(\bTheta;\bZ,\by)}d\mu_p}
{\int e^{-n\beta R_n(\bTheta;\bZ,\by)}d\mu_p}\right].
\end{align}

Similarly we substitute $\sum_{\bTheta}$ by $\int d\mu$ in the remaining arguments in the proof of Lemma 1 in \cite{montanari2022universality}.

\paragraph{Interpolation path} Recall that the important property of $\bm U_t$ is that
\begin{equation}\label{eq:ut_constant_norm}
\Ea{\left(\frac{d\bm U_t}{dt}\right)^\top \bm U_t} = 0.
\end{equation}
To this end, we set
\[ \bm u_{t, i} = \bmu_{\sigma(i)} + \cos(t)(\bm x_i - \bmu_{\sigma(i)}) + \sin(t)(\bm g_i - \bmu_{\sigma(i)}), \]
and it is easy to check that \eqref{eq:ut_constant_norm} is satisfied. Another problem is that the columns of $\bm U_t$ are not i.i.d anymore, so we have to control
\begin{equation}\label{eq:interp_non_iid}
    \frac 1n \sum_{i=1}^n \left| \dE_{(i)}\left[\left(\frac{d\bm u_{t, i}}{dt}\right)^\top \nabla_{\bm u_{t, i}} f_{\epsilon, \beta}(\bm U_t)\right] \right|,
\end{equation}
where this time $\dE_{(i)}$ is the expectation w.r.t $(\bm x_i, \bm g_i, \eps_i)$. However, \eqref{eq:interp_non_iid} is a weighted average over all values of $\sigma(\mu)$, and since $\cC$ is finite is suffices to show \eqref{eq:interp_universality} for any value of $\sigma(1)$.

\paragraph{Showing \eqref{eq:interp_universality}} This section again relies on concentration properties of the $\bm x_i$ and $\bm g_i$, as well as Assumption \ref{assump:clt}. The arguments thus translate directly from \cite{montanari2022universality}.



\subsection{Proof of Proposition \ref{prop:joint_univ}}
\label{sec:app:joint_univ}

We define the following free energy of the system:
\begin{equation}\label{eq:app:joint_free_energy}
    f_{n,s, \epsilon}(\bm \bX, \by) = -\frac1{n}{\log\int} e^{-n \sum_{m=1}^M \beta_m\,  \widehat\cR_n^{(m)}(\bm\Theta^{(m)} ; \bm \bX^{(m)}, \bm y^{(m)}) -sn~h(\bTheta^{(1)}, \dots, \bTheta^{(M)})}d\mu_{\epsilon}^{1:M_1} d\mu^{(M_1+1):M},
\end{equation}
where the $\mu_m$ are the reference measures for the Boltzmann distributions in \eqref{def:sampling}, and $\mu_{\epsilon}^m$ is the uniform measure supported on a minimal $\epsilon$-net of $\cS^{(i)}_p$:
\begin{equation}
    \mu^m_\epsilon = \frac{1}{\abs{\cN^m_\epsilon}}\sum_{\bTheta \in \cN^m_\epsilon}\delta_{\bTheta},
\end{equation}
where $\delta_{\bTheta}$ denotes the Dirac measure at $\bTheta$. We establish the following result:
\begin{lemma}[Universality of the joint free energy]
Under Assumptions \ref{assump:loss}-\ref{assump:clt}, for any fixed $\epsilon >0$ and any bounded differentiable function $\psi$ with bounded Lipschitz derivative we have.
  \begin{equation}
\nonumber
 \lim_{n\to\infty} \left|\Ea{\psi(f_{n,s, \epsilon}(\bm \bX, \by))} - 
\Ea{\psi(f_{n,s, \epsilon}(\bm \bX, \by))}\right| = 0.
\end{equation}  
\end{lemma}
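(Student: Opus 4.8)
The plan is to prove this lemma by the same Lindeberg-type interpolation strategy used for the single free energy in Lemma~\ref{lem:montanari_free_energy}, adapted to the joint free energy \eqref{eq:app:joint_free_energy}. The key structural observation is that the joint free energy is itself the log-partition function of a single Gibbs measure on the product space $\prod_{m=1}^M \cS_p^{(m)}$, with Hamiltonian $n\sum_m \beta_m \hR_n^{(m)} + sn\, h$. Crucially, this combined Hamiltonian still depends on the data only through the scalar pre-activations $\bm\Theta^{(m)\top}\bx_i^{(m)}$ (and through the teacher pre-activations used in the labels), because both the individual risks and the coupling term $h$ are functions of the parameter matrices alone. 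So the whole object fits the template of Theorem~\ref{thm:main_gaussian_universality}, just with a larger (product) constraint set and a modified but still Lipschitz ``effective loss.''

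First I would set up the interpolation path. Writing $\bx_i = (\bx_i^{(1)},\dots,\bx_i^{(M)})$ and its Gaussian counterpart $\bg_i$, I define columnwise
\[
\bu_{t,i} = \bmu_{\sigma(i)} + \cos(t)(\bx_i - \bmu_{\sigma(i)}) + \sin(t)(\bg_i - \bmu_{\sigma(i)}),
\]
exactly as in the proof of Theorem~\ref{thm:main_gaussian_universality}, so that \eqref{eq:ut_constant_norm} holds blockwise by matching means and covariances. Bounding $|\Ea{\psi(f_{n,s,\epsilon}(\bX))} - \Ea{\psi(f_{n,s,\epsilon}(\bG))}|$ by $\int_0^{\pi/2}|\Ea{\tfrac{d}{dt}\psi(f_{n,s,\epsilon}(\bU_t))}|\,dt$, the chain rule produces a sum over samples $i$ of terms of the form $(\tfrac{d\bu_{t,i}}{dt})^\top \nabla_{\bu_{t,i}} f_{n,s,\epsilon}(\bU_t)$, where the gradient of the log-partition function is a Gibbs average of $\nabla_{\bu_{t,i}}$ of the combined Hamiltonian — explicitly a Boltzmann-weighted integral as in \eqref{eq:derivative_explicit}. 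Since $\cC$ is finite, it suffices to show each such term vanishes for any fixed type $\sigma(1)$; I then invoke the Gaussian-case independence argument (the derivative direction is orthogonal to $\bu_{t,1}$ because $\bx$ and $\bg$ share covariances, hence the expectation factorizes) together with Assumption~\ref{assump:clt} to replace the non-Gaussian block $\bx_1$ by a matching Gaussian vector.

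The only genuinely new point relative to Theorem~\ref{thm:main_gaussian_universality} is checking that the coupling term $sn\,h(\bTheta^{(1)},\dots,\bTheta^{(M)})$ does not break the argument. The crucial fact I would verify is that $h$ enters the gradient $\nabla_{\bu_{t,1}}$ of the Hamiltonian \emph{only through} the derivative of the pre-activations $\bTheta^{(m)\top}\bu_{t,1}^{(m)}$, so that $\bx_1$ still appears exclusively via scalar products $\bTheta^{(m)\top}\bx_1^{(m)}$ and $\bTheta_\star^{(m)\top}\bx_1^{(m)}$. This is precisely the hypothesis under which Assumption~\ref{assump:clt} applies, so the one-dimensional CLT can be used to swap $\bx_1$ for a Gaussian $\bw$ independent of $\bg_1$, and the Gaussian computation closes the bound. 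The required boundedness and Lipschitz control then follow provided $h$ is Lipschitz (or the relevant observables are controlled), so that the effective loss $\sum_m \beta_m \ell_m + s\,h$ satisfies the regularity used in the free-energy approximation step.

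The main obstacle I anticipate is \emph{integrability and uniform control of the Gibbs averages} in the presence of the extra $sn\,h$ factor. For $s>0$ one must ensure the combined partition function stays finite and its gradient does not blow up — this is where compactness of the supports $\cS_p^{(m)}$ and the Lipschitz/boundedness assumptions on $h$ and the losses are essential, letting one dominate the $t$-derivative uniformly and apply dominated convergence to reduce to the pointwise limit. A secondary technicality is the non-i.i.d.\ structure of the columns $\bu_{t,i}$ induced by the type assignment $\sigma$: as in \eqref{eq:interp_non_iid}, I handle this by averaging over the finitely many types and reducing to a single representative type, exactly mirroring Theorem~\ref{thm:main_gaussian_universality}. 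Once these controls are in place, the argument is a direct transcription of the single-objective proof to the product measure $d\mu_\epsilon^{1:M_1}\,d\mu^{(M_1+1):M}$.
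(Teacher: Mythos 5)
Your proposal is correct and is essentially the paper's argument: the paper also observes that the joint free energy is a single Gibbs measure on the product parameter space with combined Hamiltonian $\sum_m \beta_m \hR_n^{(m)} + s\,h$, but it packages this as an explicit block-diagonal embedding of $(\bTheta^{(1)},\dots,\bTheta^{(M)})$ into a $pM\times kM$ matrix with a summed loss and a regularizer absorbing $s\,h$, and then invokes Theorem \ref{thm:single_univ_free} as a black box rather than re-running the interpolation inline as you do. One small imprecision worth noting: since $h$ depends only on the parameters and not on the data, it contributes nothing to $\nabla_{\bu_{t,1}}$ of the Hamiltonian (it only reweights the Gibbs measure), so the point you set out to ``verify'' is automatic --- which is exactly why the paper can treat $s\,h$ as part of the regularizer.
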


\begin{proof}
    We construct a reduction from the free energy of the form in equation \ref{eq:app:joint_free_energy} to the universality of free energy for a single objective in Theorem \ref{thm:single_univ_free}. 
    
We construct an equivalent objective on the $pM\times kM$ dimensional space. Consider the following mapping:
\begin{equation}\label{eq:mat_comb}
    \bTheta = \begin{pmatrix}
        \bTheta^{(1)} & \bm 0 & \bm 0 & \cdots\\
        \bm 0 & \bTheta^{(2)} & \bm 0 & \cdots\\
        \cdots & \cdots & \cdots &  \cdots\\
        \bm 0 & \bm 0& \cdots &\bTheta^{(M)}
    \end{pmatrix}
\end{equation}.
Let $\cS_{pM}$ be the set obtained by applying the mapping in equation \ref{eq:mat_comb} to $\cS^{(1)}_p,\cdots,\cS^{(M)}_p$. 
Let $\bX = (\bX^{(1)},\cdots, \bX^{(M)})$ denote the combined input matrix with each row of dimension $pM$. We note that $\cS_{pM}$ is a product of $kM$ compact sets, each satisfying the assumption \ref{assump:concentration}.
Similarly, we have the combined output vector:
\begin{equation}
    \by = \begin{pmatrix}
        \by^{(1)}\\
        \vdots\\
        \by^{(M)}
    \end{pmatrix}
\end{equation}

We define $\ell:\R^{k'M}\times \dR^{kM}  \rightarrow \dR$ by:
\begin{equation}
\ell(\bu,\yy)=\sum_{m=1}^M \beta_m \ell_m(\bu[(k-1)m:km],\yy[(k-1)m:km]).
\end{equation}
Similarly, we define the total regularization as:
\begin{equation}
\begin{split}
    r(\bTheta)&= \sum_{m=1}^M \beta_m r_m(\bTheta[(m-1)p:mp,(m-1):k,mk])\\ &+ sh(\bTheta[0:p,0:k],\cdots,\bTheta[(M-1)p:Mp,(M-1)k:Mk]).
\end{split}
\end{equation}
Let $\hR_n(\bTheta;\bX,\by)$ denote the following objective on the combined vector $\bTheta$:
\begin{equation}
    \hR_n(\bTheta;\bX,\by) = \frac{1}{n} \sum_{i=1}^n \ell(\bTheta^\top \bx_i,\yy_i)+ r(\bTheta)
\end{equation}
Then, using all the definitions above, we have
\begin{equation}
\hR_n(\bTheta;\bZ,\by)=\sum_{m=1}^M \beta_m\,  \widehat\cR_n^{(m)}(\bm\Theta^{(m)} ; \bm \bX^{(m)}, \bm y^{(m)} )+s~h(\bTheta^{(1)},\cdots,\bTheta^{(M)})
\end{equation}
Therefore, we obtain that:
\begin{equation}
\begin{split}
     f_{n,s, \epsilon}(\bm \bX, \by) &= -\frac1{n}{\log\int} e^{-n \sum_{m=1}^M \beta_m\,  \widehat\cR_n^{(m)}(\bm\Theta^{(m)} ; \bm \bX^{(m)}, \bm y^{(m)}) -sn~h(\bTheta^{(1)}, \dots, \bTheta^{(M)})}d\mu_{\epsilon}^{1:M_1} d\mu^{(M_1+1):M}\\
    &=-\frac1{n}{\log\int} e^{-n\hR_n(\bTheta;\bX,\by)}d\mu_{\epsilon}^{1:M_1}d\mu^{(M_1+1):M}.
\end{split}
\end{equation}
We further note that the constraint sets on $\bTheta$, and the joint means, covariances on $\bm X$ satisfy the assumptions \ref{assump:app:concentration}. Therefore, using Theorem \ref{thm:single_univ_free}, we obtain that:
 \begin{equation}
\nonumber
 \lim_{n\to\infty} \left|\Ea{\psi(f_{n,s, \epsilon}(\bm \bX, \by))} - 
\Ea{\psi(f_{n,s, \epsilon}(\bm \bX, \by))}\right| = 0
\end{equation}  
\end{proof}
The proof of Proposition \ref{prop:joint_univ} then follows using the $\epsilon$-net approximation in \eqref{eq:free_energy_approx} for $\bTheta[1:M_1]$.

\subsection{Proof of Theorem \ref{thm:overlap_universality}}

Our proof relies on the following result:
\begin{lemma}\label{lem:app:convex_conv}
For any $n \in \mathbb{N}$, $q_{n}(s)$ is a concave function of $s$, that is differentiable at $0$, and
\begin{equation}
    q'_{n}(0) = \Ea{\lin{h\left(\bTheta^{(1)},\cdots,\bTheta^{(M)}\right)}_{\bG}}.
\end{equation}
\end{lemma}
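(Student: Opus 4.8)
The plan is to work at fixed $n$ and a fixed realization of the Gaussian data $\bG$, analyzing the random function $g_{\bG}(s) \coloneqq \hR_{n,s}^\star(\bG, \by(\bG)) = \min_{\bTheta[1:M_1]} \hR_{n,s}(\bTheta[1:M_1], \bG, \by)$ before taking the expectation that defines $q_n$. \textbf{Concavity} comes essentially for free: the only $s$-dependence sits in the coupling term $f_{n,s}$ of \eqref{eq:free_energy_def}, and writing $Y \coloneqq -n\, h(\bTheta^{(1)}, \dots, \bTheta^{(M)})$ as a function of the sampled blocks $\bTheta^{(M_1+1)}, \dots, \bTheta^{(M)} \sim P^{(M_1+1):M}$, one has $f_{n,s} = -\tfrac{1}{n}\log \int e^{sY}\, dP^{(M_1+1):M}$, i.e. $-\tfrac1n$ times a cumulant generating function. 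Since cumulant generating functions are convex in $s$ (Hölder's inequality), $f_{n,s}$ is concave in $s$ for every fixed $\bTheta[1:M_1]$; adding the $s$-independent risks $\sum_m \hR_n^{(m)}$ preserves this, the pointwise infimum over $\bTheta[1:M_1]$ of a family of functions concave in $s$ is again concave, and finally $q_n = \E_{\bG}[g_{\bG}]$ inherits concavity by linearity of expectation.

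Next I would \textbf{identify the derivative at $0$}. Because $P^{(M_1+1):M}$ is a product of probability measures, $f_{n,0} = 0$, so the problem at $s=0$ is exactly the joint minimization \eqref{def:min_multi}. Differentiating the cumulant generating function gives $\partial_s f_{n,s} = \frac{\int h\, e^{-snh}\,dP^{(M_1+1):M}}{\int e^{-snh}\, dP^{(M_1+1):M}}$, which at $s = 0$ and fixed $\bTheta[1:M_1]$ is precisely the Gibbs average $\lin{h}$ over the sampled blocks. To transport this pointwise $s$-derivative onto the minimum $g_{\bG}$, I would invoke Danskin's theorem on the compact constraint set $\prod_m \cS_p^{(m)}$: the right and left derivatives of $g_{\bG}$ at $0$ equal, respectively, the minimum and the maximum of $\partial_s f_{n,s}\big|_{s=0}$ over the set of minimizers $\hat\bTheta[1:M_1]$ of \eqref{def:min_multi}, and each such quantity is exactly $\lin{h}_{\bG}$ evaluated at the corresponding minimizer.

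\textbf{The main obstacle} is non-uniqueness of the minimizers: a priori $g_{\bG}$ is only one-sided differentiable, with a gap between left and right derivatives whenever $\partial_s f_{n,s}\big|_{s=0}$ varies across the minimizer set. This is exactly what Assumption~\ref{assump:indep_min} is designed to remove — on its high-probability event, $\lin{h}_{\bG}$ does not depend on which minimizer of \eqref{def:min_multi} is selected, so the min and the max above coincide and $g_{\bG}$ is genuinely differentiable at $0$ with $g_{\bG}'(0) = \lin{h}_{\bG}$. I would treat the complementary (vanishing-probability) event using the boundedness/Lipschitz hypotheses so that it contributes negligibly.

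Finally I would \textbf{interchange differentiation with the expectation over $\bG$}. Since each $g_{\bG}$ is concave, the difference quotients $s^{-1}(g_{\bG}(s) - g_{\bG}(0))$ are monotone in $s$ and converge to the one-sided derivatives $g_{\bG,\pm}'(0)$ as $s \to 0^{\pm}$. Writing $s^{-1}(q_n(s) - q_n(0)) = \E_{\bG}\!\left[s^{-1}(g_{\bG}(s) - g_{\bG}(0))\right]$ and applying the monotone (or dominated, via the Lipschitz bounds ensuring integrability) convergence theorem passes the limit inside, giving $q_{n,\pm}'(0) = \E_{\bG}[g_{\bG,\pm}'(0)] = \E_{\bG}[\lin{h}_{\bG}]$. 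As the one-sided derivatives agree, $q_n$ is differentiable at $0$ and $q_n'(0) = \Ea{\lin{h\left(\bTheta^{(1)},\cdots,\bTheta^{(M)}\right)}_{\bG}}$, as claimed.
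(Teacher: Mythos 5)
Your proposal is correct and follows essentially the same route as the paper: concavity of $f_{n,s}$ in $s$ (the paper verifies it by computing $-\tfrac1n \partial_s^2 f_{n,s}$ as a Gibbs variance, which is the same fact as your cumulant-generating-function convexity), preservation of concavity under pointwise infimum and expectation, and Danskin's theorem combined with Assumption \ref{assump:indep_min} to collapse the subdifferential at $0$ to a single element. Your extra care in justifying the interchange of the $s$-derivative with $\E_{\bG}$ via monotone difference quotients, and in handling the complementary event of Assumption \ref{assump:indep_min}, fills in details the paper leaves implicit but does not change the argument.
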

\begin{proof}
    Consider the joint free energy in Equation \eqref{eq:free_energy_def}:
\begin{equation}
    f_{n,s}(\bTheta[1:M_1],\bm \bX, \by) = -\frac1{n}{\log\int} e^{-sn~h(\bTheta^{(1)}, \dots, \bTheta^{(M)})}dP^{(M_1+1):M},
\end{equation}
Let $\langle\cdot\rangle_{M_1+1:M}$ denote the expectation w.r.t the product measure
\[ d \tilde \mu(\bTheta[1:M_1],\bm \bX, \by) =  e^{-sn~h(\bTheta^{(1)}, \dots, \bTheta^{(M)})}dP^{(M_1+1):M}. \]
We observe that:
\begin{equation}\label{eq:app:free_energy_derivative}
    \frac{d f_{n, s}}{ds}(\bTheta[1:M_1],\bm \bX, \by) =\langle h(\bTheta^{(1)}, \dots, \bTheta^{(M)})\rangle_{M_1+1:M}.
\end{equation}
Differentiating w.r.t s again, and using the dominated convergence theorem, we obtain:
\begin{equation}
    -\frac1n\frac{d^2 f_{n, s}}{ds^2}(\bTheta[1:M_1],\bm \bX, \by) =\langle h(\bTheta^{(1)}, \dots, \bTheta^{(M)})^2\rangle_{M_1+1:M}-\langle h(\bTheta^{(1)}, \dots, \bTheta^{(M)})\rangle^2_{M_1+1:M}.
\end{equation}
Since the R.H.S equals the variance of the variable $h(\bTheta^{(1)}, \dots, \bTheta^{(M)})$ w.r.t $\tilde{\mu}^{(M_1+1):M}$, we have:
\begin{equation}
    \frac{d^2 f_{n, s}(\bTheta[1:M_1],\bm \bZ,s)}{ds^2} \leq 0.
\end{equation}
Therefore, for fixed $\bTheta[1:M_1]$, we obtain that the function:
\begin{equation}
\hR_{n, s}(\bTheta[1:M_1],\bX,\by)=\sum_{m=1}^{M_1}\hR^{(m)}_n(\bTheta^{(m)};\bX^{(m)},\by^{(m)}) + f_{n,s}(\bTheta[1:M_1],\bm \bX, \by).
\end{equation}
is concave in s. Next, we recall that pointwise infimum of arbitrary collections of concave functions is concave \citep{rockafellar1970convex}. Therefore, we obtain that the function:
\begin{equation}
    q_{n}(s) = \Ea{\min_{\bTheta[1:M_1]}\hR_{n, s}(\bTheta[1:M_1],\bG,\by)},
\end{equation}
is concave in $s$.
Then, by Danskin's theorem, the subdifferential of $q_{n}$ at zero is the set
\[ \Set*{\langle h(\bTheta^{(1)}, \dots, \bTheta^{(M)})^2\rangle_{M_1+1:M}, \bTheta[1:M_1] \in \argmin \hR_{n, 0}(\bTheta[1:M_1],\bX,\by)} \]
But by Assumption \ref{assump:indep_min}, this set only has one element, and hence $q_n$ is differentiable at $0$.
\end{proof}

Next, we relate the convergence of the above functions to the expectation of $h(\bTheta^{(1)}, \dots, \bTheta^{(M)})$, through the following standard result from Convex Analysis:

\begin{theorem}\label{thm:conv}
(Theorem 25.7. in \cite{rockafellar1970convex}): Let $C$ be an open convex set, and let $f$ be a convex function which is finite and differentiable on $C$. Let $f_1, f_2, \ldots$, be a sequence of convex functions finite and differentiable on $C$ such that $\lim _{n \rightarrow \infty} f_n(x)=f(x)$ for every $x \in C$. Then
$$
\lim _{n \rightarrow \infty} \nabla f_n(x)=\nabla f(x), \quad \forall x \in C .
$$ 
\end{theorem}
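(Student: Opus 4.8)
The plan is to lean on two structural facts about convex functions: the first-order (subgradient) inequality, which for a \emph{differentiable} convex function $f_n$ reads $f_n(y) \ge f_n(x) + \langle \nabla f_n(x), y - x\rangle$ for all $y \in C$, and the fact that pointwise convergence of finite convex functions on an open set upgrades automatically to uniform convergence on compact subsets. Fix $x \in C$ and pick $r > 0$ with $\bar{B}(x, r) \subset C$. I would proceed in three moves: first show that the gradient sequence $\{\nabla f_n(x)\}_n$ is bounded (hence precompact); then show that every subsequential limit $g$ of this sequence satisfies $g \in \partial f(x)$; and finally invoke differentiability of $f$ at $x$ to conclude $\partial f(x) = \{\nabla f(x)\}$, so that the bounded sequence has a single limit point and therefore converges to $\nabla f(x)$.

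\textbf{Boundedness of the gradients.}
I would first invoke the classical local-uniform-convergence result (Rockafellar, Theorem 10.8): a pointwise-convergent sequence of finite convex functions on the open set $C$ converges uniformly on every compact subset. Since the limit $f$ is convex and finite on $C$ it is continuous, hence bounded on the compact ball $\bar{B}(x, r)$, and uniform convergence then gives $\sup_n \sup_{y \in \bar{B}(x,r)} |f_n(y)| =: K < \infty$. Substituting $y = x + ru$ for a unit vector $u$ into the subgradient inequality yields $\langle \nabla f_n(x), u\rangle \le \tfrac1r\,(f_n(x + ru) - f_n(x)) \le 2K/r$; maximizing over $u$ gives $\norm{\nabla f_n(x)} \le 2K/r$ uniformly in $n$. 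If one prefers to avoid citing Theorem 10.8, the bound $K$ can be obtained directly: by convexity $f_n$ attains its maximum over the box $\prod_i [x_i - r,\, x_i + r]$ at one of its finitely many vertices, and $f_n$ converges at each vertex, so the supremum over $n$ of these vertex maxima is finite.

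\textbf{Passing to the limit.}
By boundedness, extract any convergent subsequence $\nabla f_{n_k}(x) \to g$. For each fixed $y \in C$ the subgradient inequality gives $f_{n_k}(y) \ge f_{n_k}(x) + \langle \nabla f_{n_k}(x), y - x\rangle$; letting $k \to \infty$ and using the pointwise convergences $f_{n_k}(y) \to f(y)$ and $f_{n_k}(x) \to f(x)$ together with $\nabla f_{n_k}(x) \to g$ produces $f(y) \ge f(x) + \langle g, y - x\rangle$ for all $y \in C$, i.e. $g \in \partial f(x)$. Since $f$ is differentiable at $x$, its subdifferential is the singleton $\{\nabla f(x)\}$, forcing $g = \nabla f(x)$. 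As every subsequential limit of the bounded sequence $\{\nabla f_n(x)\}$ equals $\nabla f(x)$, the full sequence converges to $\nabla f(x)$, which is the claim. (The concave $q_n$ used in the paper are handled by applying the theorem to the convex functions $-q_n$.)

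\textbf{Main obstacle.}
The only place where convexity does work beyond the one-line subgradient inequality is the uniform boundedness of the gradients, which rests on local uniform convergence (equivalently, the finite-vertex maximum-principle argument above). Once the gradients are known to be bounded, the remainder is a soft \emph{closedness} property of the subdifferential under pointwise limits, and differentiability of $f$ at $x$ supplies uniqueness of the limit point; these last two steps are routine.
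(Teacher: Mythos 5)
This statement is quoted verbatim from Rockafellar's \emph{Convex Analysis} (Theorem 25.7) and is invoked by the paper as a black box; the paper contains no proof of it, so there is nothing on the paper's side to compare against. Your argument is correct, and it is essentially the standard textbook proof: (i) uniform boundedness of the gradient sequence $\{\nabla f_n(x)\}$ obtained from local uniform convergence of finite convex functions (or the vertex maximum principle) plus the first-order inequality; (ii) closedness of the subdifferential under pointwise limits, so that every cluster point $g$ satisfies $g \in \partial f(x)$; (iii) differentiability of $f$ at $x$, which collapses $\partial f(x)$ to the singleton $\{\nabla f(x)\}$ and, together with precompactness, forces convergence of the whole sequence. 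Your closing remark that the paper's concave $q_n$ are handled by applying the result to $-q_n$ is also exactly how the theorem is used in the proof of Theorem \ref{thm:overlap_universality}. Two microscopic points to tighten, neither of which affects correctness: in the vertex variant, the box $\prod_i [x_i - r,\, x_i + r]$ must itself lie in $C$ (its corners sit at distance $r\sqrt{d}$ from $x$, so containment of $\bar{B}(x,r)$ in $C$ is not enough; shrink the box or the radius); and the vertex maximum only bounds $f_n$ from \emph{above} on the box, so the lower bound on $f_n(x)$ needed in the difference quotient should be stated explicitly as coming from the convergence of the numerical sequence $f_n(x) \to f(x)$.
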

By Assumption \ref{assump:diff_lim},
\begin{equation}\label{eq:assump}
    \lim_{n \rightarrow \infty} q_{g,n}(s) = q(s).
\end{equation}
Applying theorem \ref{thm:conv} to the sequence $q_{g,n}(s)$ yields:
\begin{equation}
    \lim_{n \rightarrow \infty} q'_{n}(0)=q'(0).
\end{equation}
Now, consider the corresponding free energy for the data distribution $p_{\bm x}$:
\begin{equation}
q_{x,n}(s) =\Ea{\min_{\bTheta[1:M_1]}\hR_{n, s}(\bTheta[1:M_1],\bX,\by)}.
\end{equation}
We again have that $q_{x,n}(s)$ is a concave, differentiable function in $s$, with:
\begin{equation}\label{eq:grad_x}
q'_{x, n}(0) = \Ea{\lin{h\left(\bTheta^{(1)},\cdots,\bTheta^{(M)}\right)}_{\bX}}.
\end{equation}
Now, Proposition \ref{prop:joint_univ} and equation \eqref{eq:assump} imply that 
 \begin{equation}
   \lim_{n \rightarrow \infty} q_{x,n}(s)= \lim_{n \rightarrow \infty} q_{n}(s) = q(s).
\end{equation}
Therefore, Theorem \ref{thm:conv} applied to the sequence of functions $q_{x,n}(s)$ implies that:
\begin{equation}
    \lim_{n \rightarrow \infty} q'_{x,n}(0)=q'(0).
\end{equation}
By equations \ref{eq:grad_x} and Lemma \ref{lem:app:convex_conv}, we then obtain:
\begin{equation}
   \lim_{n, p \rightarrow \infty}\left|\Ea{\lin{h\left(\bTheta^{(1)},\cdots,\bTheta^{(M)}\right)}_\bX} - \Ea{\lin{h\left(\bTheta^{(1)},\cdots,\bTheta^{(M)}\right)}_\bG} \right| = 0.
\end{equation}

\subsection{Proof of Theorem \ref{thm:1d_clt}: One-dimensional CLT for Random Feature Models }\label{app:finite_1dclt}

Our proof relies on a reduction to Theorem 2 in \cite{hu2020universality} and the proof of corollary 2 in \cite{montanari2022universality}. We first notice that it suffices to show the result when $\bz \sim \cN(\bmu^\bz, \bSigma^\bz)$ is a Gaussian variable; and upon rescaling of $\sigma$ we shall assume that $\tr(\bSigma^\bz) = p$.

Let $\bV = \bSigma^{\bz 1/2} \bF$. We express $\bz$ as $\bz=\bmu^\bz+\bSigma^{\bz 1/2}\bz'$ where $\bz \sim \cN(\bm 0, \bm I)$.
Therefore, we have:
\begin{equation}
   \bx = \sigma(\bF^\top \bz) = \sigma(\bF^\top\bmu^\bz+\bV^\top\bz') .
\end{equation}

We define the following events:
\begin{align*}
    \cA_1 &= \Set*{ \sup_{i, j \in [d]} \left| \bv_i^\top \bv_j - \delta_{ij} \right| \leq C_1\left(\frac{\log d}{d}\right)^{1/2}}
    & \cA_2 &= \Set*{ \sum_{i\in[d]} \left| \norm{\bv_i}^2 - 1 \right| \leq  C_2 } \\
    \cA_3 &= \Set*{\norm{\bF}_{\mathrm{op}} \leq C_3} & \cA_4 &= \Set*{\norm{\bV}_{\mathrm{op}} \leq C_4}
\end{align*}
Since the $\bff_i$ are independent and sub-gaussian, Lemma 22 in \cite{montanari2022universality} implies that there exists constants $C_1, C_2, C_3, C_4$ such that $\cB = \cA_1 \cap \cA_2 \cap \cA_3 \cap \cA_4$ is a high-probability event. Now, for $i \in [d]$, we define
\begin{equation}
    \sigma_i(u) = \sigma(u + \bff_i^\top \bmu^\bz).
\end{equation}
Now, as in \cite{montanari2022universality}, we argue that the proof of Theorem 2 in \cite{hu2020universality} still applies to our setting. Indeed:
\begin{itemize}
    \item since $\bz$ does not have identity covariance, we replace the conditions on $\bF$ by the exact same ones on $\bV$.
    \item the Stein's method they use proceeds term by term, so using a different $\sigma_i$ in each term does not matter as long as they satisfy the boundedness assumptions above.
    \item since we match the means of $\bg$ and those of $\bx$, the requirement that $\sigma$ be odd is unimportant in our setting.
\end{itemize}
In particular, for bounded Lipschitz test functions $\varphi$, the proof of Lemma 2 in \cite{hu2020universality} shows that for any $\btheta \in \dR^d$,
\begin{equation}\label{eq:app:hu_lu_bound}
\left| \Ea{\varphi(\btheta^\top \bx)} - \Ea{\varphi(\btheta^\top \bg)} \right| \leq \frac{C \norm{\btheta}_\infty \polylog(p)}{\nu^2}
\end{equation}
where $\nu^2$ is the variance of $\btheta^\top \bx$:
\begin{equation}
    \nu^2 = \btheta^\top \bSigma \btheta.
\end{equation}
We now place ourselves in the setting where $\btheta \in \cS_p$ where $\cS_p$ is defined in \eqref{eq:1d_clt_constraints}, and we consider two cases:
\begin{enumerate}
    \item if $\nu^2 > p^{-2\eta/3}$, then \eqref{eq:app:hu_lu_bound} reduces to
    \begin{equation}
        \left| \Ea{\varphi(\btheta^\top \bx)} - \Ea{\varphi(\btheta^\top \bg)} \right| \leq \frac{C \polylog(p)}{p^{\eta/3}}.
    \end{equation}
    \item if instead $\nu^2 > p^{-2\eta/3}$, then by the Lipschitz property of $\varphi$ we have
    \begin{equation}
        \left|\Ea{\varphi(\btheta^\top \bx)} - \varphi(\bmu)\right| \leq C \sqrt{\nu^2} = \frac{C}{p^{\eta/3}},
    \end{equation}
    and the same holds for $\bg$.
\end{enumerate}
In both cases, the bounds goes to $0$ uniformly over the whole constraint set $\cS_p$, which shows that Assumption \ref{assump:clt} holds.

We now move to checking Assumption \ref{assump:cov_bounded}; Lemma 8 in \cite{hu2020universality} (more precisely, eq. (159)) exactly shows that for $\btheta \in \cS_p$, the random variable $\btheta^\top x - \btheta^\top \bmu$ is $C$-subgaussian for an absolute constant $C$. Hence, we only need to show that $\bmu$ is uniformly bounded. Recall that
\[ \mu_i = \Ea{\sigma(\bff_i^\top \bz)} = \Ea{\sigma\left(\bff_i^\top \bmu^\bz + (1 + \tau_i) \tilde z\right)} \]
where $\tilde z$ is a standard normal variable and
\[ \tau_i = \norm{\bv_i} - 1. \]
Then, we can write using the Lipschitz property of $\sigma$
\[ \sigma(\bff_i^\top \bz) = \sigma(\tilde z) + (\bff_i^\top \bmu^\bz + \tau_i \tilde z) \tilde \sigma(\bff_i^\top \bz) \]
where $\tilde \sigma$ is a uniformly bounded function. By assumption, $\sigma(\tilde z)$ has zero mean, and hence by the Cauchy-Schwarz inequality
\begin{align*}
    \norm{\bmu}^2 &\leq C\left(\sum_{i\in [d]} (\bff_i^\top \bmu^\bz)^2 + (\norm{\bv_i} - 1)^2 \right) \\
    &\leq C \left(\norm{\bF}_{\mathrm{op}}^2 \norm{\bmu^\bz}^2 + \sum_{i \in [d]} \left|\norm{\bv_i}^2 - 1\right| \right) \\
    &\leq C'
\end{align*}
under the high-probability event $\cB$. \hfill \qedsymbol

\subsection{Proof of Theorem \ref{thm:weak_conv}}\label{app:weak_conv}

Our proof utilizes the  results in \cite{loureiro2021learning_gm} that describe the asymptotic limits of the estimators obtained through empirical risk minimization on the mixture of gaussians dataset. We note that the assumptions A1-A5 of their Theorem 1 are satisfied by our setting.

Let $\bW^{\star}$ denote the minimizer of the objective in equation \ref{eq:gmm_erm}, and let $\bZ^\star = \bX\bW^{\star}$. Let $\bxi_{k \in [K]}\sim \mathcal N(\mathbf 0,\bI_K)$ , $\bXi_k\in\mathbb R^{K\times d}$ be sets of $K$-dimensional vectors and  dimensional matrices respectively, with i.i.d entries sampled from $\mathcal N(0,1)$. 

Then, Theorem 1 in \cite{loureiro2021learning_gm} proves that for any pseudo-lipschitz functions of finite order, $\phi_{1}:\mathbb{R}^{K\times d} \to \mathbb{R}, \phi_{2}: \mathbb{R}^{K\times n}\to \mathbb{R}$:
\begin{align}\label{eq:pseudo_conv}
    \phi_{1}(\bW^{\star}) \xrightarrow[n,d \to +\infty]{P}\mathbb{E}_{\bXi}\left[\phi_{1}(\bG)\right], && \phi_{2}(\bZ^{\star})\xrightarrow[n,d \to +\infty]{P}\mathbb{E}_{\bxi}\left[\phi_{2}(\bH)\right]\,
\end{align}
Here $\bG$ and $\bH$ are functions of certain finite dimensional parameters
\[ \bu := (\bQ_{k} \in \R^{K \times K},\bM_{k} \in \R^K,\bV_{k} \in \R^{K \times K},\bhQ_{k} \in \R^{K \times K},\bhM_{k} \in \R^K,\bhV_{k} \in \R^{K \times K})_{k\in[K]} \] 
and the random vectors $\bxi_{k \in [K]},\bXi_{k \in [K]}$. The matrix $\bH$ is obtained by concatenating the following functions $\bh_k$, $\rho_{k}n$ time for each $k$:
\begin{equation} \bh_k=\bV_k^{1/2}\Prox_{\ell(\be_k,\bV_k^{1/2}\bullet)}(\bV^{-1/2}_k\bomega_{k})\in\mathbb R^{K}\, , \qquad
\boldsymbol{\omega}_k\equiv\bM_k+\bbb+\bQ^{1/2}_k\bxi_{k}\, ,
\end{equation}
Similarly, the matrix $\bG\in\mathbb R^{K\times d}$ is described by:
\begin{equation}
\bG=\bsA^{\frac{1}{2}}\odot \Prox_{r(\bsA^{\frac{1}{2}}\odot \bullet)}(\bsA^{\frac{1}{2}}\odot \bbB),\ \ \ \ 
\bsA^{-1}\equiv\sum_k \bhV_k\otimes\bSigma_k,\ \  \bbB\!\equiv\sum_k\!\left(\!\bmu_k\bhM_k^\top\!+\!\bXi_k\odot \sqrt{\bhQ_k\!\otimes\!\bSigma_k }\!\right). \nonumber
\end{equation}
Further define the function:
$\bff_k\equiv \bV_k^{-1}(\bh_k-\bomega_k)$. The equivalent bias vector $\bbb^\star$ is defined through the linear equation:
\begin{equation}
    \sum_k \rho_k\mathbb{E}_{\bxi}\left[\bV_k\bff_k\right]=\mathbf 0,
\end{equation}
and is therefore, unique, differentiable in $\bu$.
The parameters $\bu$ satisfy the following equations:
\begin{equation}\label{spgeneral}
\begin{cases}
\bQ_k\!=\!\frac{1}{d}\mathbb E_{\bXi}[\bG\bSigma_k\bG^\top]\\
\bM_k\!=\!\frac{1}{\sqrt d}\mathbb E_{\bXi}[\bG\bmu_k]\\
\bV_k\!=\!\frac{1}{d}\mathbb{E}_{\bXi}\!\!\left[\left(\bG\odot \!\left(\bhQ_k\otimes\bSigma_k\right)^{-\frac{1}{2}}\!\!\!\odot (\bI_K\otimes\bSigma_k) \!\right)\bXi_k^\top\right]
\end{cases} \!
\begin{cases}
\bhQ_k\!=\alpha \rho_k\mathbb E_{\bxi}\left[\bff_k\bff_k^\top\right]\\
\bhV_k\!=-\alpha \rho_k\bQ_k^{-\frac{1}{2}}\mathbb E_{\bxi}\!\left[\bff_k\bxi^\top\right]\\
\bhM_k\!=\alpha \rho_k\mathbb E_{\bxi}\left[\bff_k\right].
\end{cases}
\end{equation}.

We observe that the system of equations \eqref{spgeneral} can be expressed as a multi-dimensional fixed point equation:
\begin{equation}\label{eq:fixed_points}
    \bu = F_n(\bu).
\end{equation}

We make the following assumption, which is slightly stronger than (A5) in \cite{loureiro2021learning_gm}:
\begin{assump}\label{assump:bounded_unique}
 The fixed point equations $\bu = F_n(\bu)$ have unique solutions 
 $\forall n \in \mathbb{N}$.  Let $\hat{\bu}_n$ be the unique solution to $\bu=F_n(\bu)$. We further assume the solutions are uniformly bounded, i.e:
 \begin{equation}
     \norm{\hat{\bu}_n} \leq K
 \end{equation}
 for some constant $K$, and the jacobian of the fixed point equations $I-\frac{dF_n}{d\bu}$ is invertible. Furthermore, we assume that the same assumptions hold for the limiting equations $\bu=F(\bu)$.
\end{assump}

\textbf{Remark}: While we assume the above conditions, as noted in \cite{loureiro2021learning_gm}, the fixed point equations \ref{spgeneral}, correspond to the optimality conditions of a strictly convex-concave problem. This can be rigorously proven using the properties of Bregman envelopes, as in \cite{loureiro2021learning,celentano2020lasso}. The strict convexity-concavity then implies the uniqueness of the fixed points as well as the differentiability of the limits.

We now prove the following result:
\begin{lemma}\label{lem:fix_conv}
    Under assumption \ref{assump:emp_conv}, the system of equations \eqref{spgeneral} converge uniformly to a limiting system of equations $\bu=F(\bu)$.
\end{lemma}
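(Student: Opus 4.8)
The plan is to exploit the separable structure of the problem to rewrite each equation of \eqref{spgeneral} as an empirical average over the $d$ coordinates of a regular integrand, and then to invoke the weak convergence of the empirical moment distribution. The first step is to show that the map $F_n$ decouples across the coordinate index $i \in [d]$. Because the regularizer is separable by Assumption~\ref{assump:reg} and every covariance $\bSigma_c$ is diagonal by Assumption~\ref{assump:cov_bounded}, the operator $\bsA^{-1} = \sum_k \bhV_k \otimes \bSigma_k$ is block-diagonal in $i$, with $i$-th block $\sum_k \sigma_{k,i}\bhV_k$; hence $\bsA^{1/2}$ is block-diagonal as well, and the proximal map defining $\bG = \bsA^{1/2}\odot\Prox_{r(\bsA^{1/2}\odot\bullet)}(\bsA^{1/2}\odot\bbB)$ factorizes coordinate-wise. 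The $i$-th column $\bG_{\cdot,i}$ is then a fixed $K$-dimensional function of the local moments $\{\mu_{c,i}\}_c$, $\{\sigma_{c,i}\}_c$, the local Gaussian row, and the parameters $\bu$ (together with the bias $\bbb(\bu)$, which is finite-dimensional and differentiable in $\bu$). Taking the Gaussian expectation $\E_{\bXi}$ coordinate-wise, the left-column equations of \eqref{spgeneral} all take the form $\frac1d\sum_{i=1}^d \Psi(\{\mu_{c,i}\}_c,\{\sigma_{c,i}\}_c;\bu)$ for an explicit $\Psi$, while the right-column hatted equations are already $d$-independent $K$-dimensional integrals against $\bxi$. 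Thus $F_n(\bu)$ is exactly an integral of $\Psi(\cdot;\bu)$ against the empirical measure of Assumption~\ref{assump:emp_conv}, and the natural candidate limit is $F(\bu) = \E_{p}[\Psi(\bm\mu,\bm\sigma;\bu)]$, the integral against $p(\bm\sigma,\bm\mu)$.

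Second, I would establish that $\Psi(\cdot;\bu)$ is bounded and jointly continuous in $(\bm\mu,\bm\sigma)$, uniformly over $\bu$ in a bounded domain. Here the nonexpansiveness of the proximal operator, the convexity and twice differentiability of $\psi_r$ (Assumption~\ref{assump:reg}), and the bounds $\sigma_{c,i}\le M$, $\norm{\bmu_c}\le M$ with strict positivity of the eigenvalues (Assumption~\ref{assump:cov_bounded}) together control the coordinate-wise function and its Gaussian moments. Given these bounds, weak convergence of the empirical measure tested against the bounded continuous function $\Psi(\cdot;\bu)$ yields the pointwise limit $F_n(\bu) \to F(\bu)$ for each fixed $\bu$; the hatted components converge trivially since they do not depend on $d$.

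Third, to upgrade pointwise to uniform convergence on the bounded parameter domain guaranteed by Assumption~\ref{assump:bounded_unique}, I would show that $\bu \mapsto \Psi(\bm\mu,\bm\sigma;\bu)$ is Lipschitz with a constant independent of $(\bm\mu,\bm\sigma)$ in the support and of $n$, so that $F_n$ and $F$ are equi-Lipschitz in $\bu$. Pointwise convergence together with equi-Lipschitzness on a compact set then forces uniform convergence by a standard $\epsilon$-net (Arzel\`a--Ascoli) argument: cover the domain by an $\epsilon$-net, use pointwise convergence at the finitely many net points, and interpolate with the uniform Lipschitz bound.

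The main obstacle will be the uniform regularity of $\Psi$ underlying steps two and three: one must control the Gaussian moments of $\bG$ and the proximal operators uniformly in $\bu$ and in the local moments, with particular care in the degenerate regime where some $\sigma_{c,i}$ approach zero, since the factors $(\bhQ_k\otimes\bSigma_k)^{-1/2}$ and $\bsA^{1/2}$ are then singular. The key is to verify that these singular factors always recombine with a compensating $(\bI\otimes\bSigma_k)^{1/2}$ or $\sqrt{\bhQ_k\otimes\bSigma_k}$, so that $\Psi$ extends boundedly and continuously down to the boundary $\sigma_{c,i}=0$; this is precisely what keeps both the averages $F_n$ and their limit $F$ well defined and equicontinuous.
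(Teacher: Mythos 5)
Your proposal follows essentially the same route as the paper's proof: both exploit the separable regularizer and diagonal covariances to factorize the proximal map coordinate-wise, rewrite each component of $F_n$ as an empirical average of a fixed continuous function of the local moments, invoke Assumption \ref{assump:emp_conv} for the pointwise limit, and use a uniform-in-$n$ Lipschitz bound in $\bu$ to upgrade to uniform convergence on the bounded domain. Your write-up is in fact slightly more careful than the paper's on the last step (making the equi-Lipschitz $+$ $\epsilon$-net argument explicit) and on the regularity of the integrand near degenerate $\sigma_{c,i}$, but the underlying argument is the same.
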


\begin{proof}
We first show that the coordinates of the equivalent minimizer $\bG$ can be expressed as follows:
    \begin{equation}\label{eq:g_prox}
        \bG_i = g(\{\mu_{c, i}\}_{c \in \cC},\{\sigma_{c, i}\}_{c \in \cC},\xi_i,\bu)
    \end{equation}
Where $g$ is a differentiable function and $\xi_i$ denote independent Gaussian random variables. Indeed, from the separability assumption on $r$, and the definition of the prox operator, we have
\begin{align}
    \Prox_{r(\bsA^{\frac{1}{2}}\odot \bullet)}(\bsA^{\frac{1}{2}}\odot \bbB) &= \argmin_{\zz} r(\bsA^{1/2}\odot \zz)+ \frac12\norm{\zz-(\bsA^{1/2}\odot \bbB)}^2\\
    &=\argmin_{\zz} \sum_{i=1}^d \psi_r((\bsA^{1/2}\odot \bz)_i) + \frac12\sum_{i=1}^d (\bz_i-(\bsA^{1/2}\odot \bbB)_i)^2.
\end{align}
$\bu$ therefore only depends on the $i_{th}$ entry of $(\bsA^{1/2}\odot \bbB)$. Further, since all $\bSigma_c$ are assumed diagonal, the entries of $(\bsA^{1/2} \odot \bz)_i$ and $(\bsA^{1/2} \odot \bB)_i$ only depend on $\bz_i, \{\mu_{c, i}\}_{c \in \cC},\{\sigma_{c, i}\}_{c \in \cC},\xi_i$, and the parameters $\bu$ . The differentiability of $g$ then follows from the implicit function theorem applied to $\psi_r(\bsA^{1/2} \odot \bullet) + \sfrac12(\bullet - (\bsA^{1/2} \odot \bB)_i)^2$ . The same holds for the matrix $\bH$.

We next observe that each coordinate of $F_n$ of the above system of equations \ref{spgeneral} can be expressed as an expectation of a fixed continuous function w.r.t the joint empirical measure of $(\{\mu_{c, i}\}_{c \in \cC},\{\sigma_{c, i}\}_{c \in \cC})$.
    For instance, consider the $(i,j)_{th}$ entry of $\bQ_k$. We have:
\begin{equation}
    \bQ_{k, {ij}}=F_{q,i,j,n}=\frac{1}{d}\sum_{\ell=1}^d\mathbb E_{\bXi}[\bG_{i\ell}(\bSigma_k)_{\ell\ell}\bG_{\ell j}].
\end{equation}
Using equation \eqref{eq:g_prox}, we have that $G_{i\ell}$ only depends on the $\ell_{th}$ coordinates of the means, covariances. Therefore, for fixed $\hat{\bu}_n$,  $\bQ_{k, {ij}}$ is an expectation w.r.t the joint empirical measure of $(\{\mu_{c, i}\}_{c \in \cC},\{\sigma_{c, i}\}_{c \in \cC})$ of a continuous function. By Assumption \ref{assump:emp_conv}, we have that 
\[ \lim_{n \rightarrow \infty} F_{q,i,j,n} = F_{q,i,j},\] 
where $F_{q,i,j}$ denotes the expectation of $E_{\bXi}[\bG_{i\ell }(\bSigma_k)_{\ell \ell }\bG_{\ell j}]$ w.r.t the joint empirical measure.

To show that the convergence is uniform, we utilize assumptions \ref{assump:cov_bounded} and \ref{assump:bounded_unique}. Since each term in $F_n$ can be expressed as:
\begin{equation}
    F^{j}_n(\bu)=\frac{1}{d}\sum_{i=1}^d \Phi_j(\bu,\{\mu_{c, i}\}_{c \in \cC}, \{\sigma_{c, i}\}_{c \in \cC}),
\end{equation}
for some function $\Phi_j$ with Lipschitz constant $L_j$. Therefore, for any $n \in \mathbb{N}$ ,$F^{j}_n(\bu)$ is $L_j$ Lipschitz. This implies the uniform convergence of $F_n$ to $F$.
\end{proof}

We now use the above result to prove convergence of the sequence of solutions $\hat{\bu}_n$.
\begin{lemma}
 Under Assumptions \ref{assump:emp_conv} and \ref{assump:bounded_unique}:
\begin{equation}
   \lim_{n \rightarrow \infty} \hat{\bu}_n = \hat{\bu},
\end{equation}
where $\hat\bu$ is the solution to the limiting equations $\bu = F(\bu)$.
\end{lemma}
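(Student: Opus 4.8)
The plan is to combine the uniform convergence $F_n \to F$ established in Lemma \ref{lem:fix_conv} with the compactness afforded by Assumption \ref{assump:bounded_unique}, through a standard subsequence argument. The crucial structural observation is that $\bu$ ranges over a \emph{fixed} finite-dimensional Euclidean space $\dR^D$, with $D$ depending only on $K=|\cC|$ (each block of $\bu$ being a matrix or vector whose size is controlled by $K$ alone). Hence bounded sets are precompact and no infinite-dimensional subtleties arise; the entire statement reduces to tracking a fixed point under a converging deformation of the map.

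First I would record two preliminary facts. (i) $F$ is Lipschitz, hence continuous: each coordinate $F_n^j$ is $L_j$-Lipschitz in $\bu$ \emph{uniformly in $n$}, as shown in the proof of Lemma \ref{lem:fix_conv}, and a pointwise limit of functions sharing a common Lipschitz constant is again $L_j$-Lipschitz. (ii) By Assumption \ref{assump:bounded_unique} the sequence $(\hat{\bu}_n)$ lies in the compact ball $\cK = \{\bu : \norm{\bu} \leq K\}$, and the limiting system $\bu = F(\bu)$ admits the unique solution $\hat{\bu}$.

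Next I would argue by subsequences. Let $(\hat{\bu}_{n_j})_j$ be any convergent subsequence of $(\hat{\bu}_n)$ — one exists by Bolzano--Weierstrass since the sequence is bounded — with limit $\bu^\star \in \cK$. Using the fixed-point identities $\hat{\bu}_{n_j} = F_{n_j}(\hat{\bu}_{n_j})$, the triangle inequality gives
\[ \norm{\bu^\star - F(\bu^\star)} \leq \norm{\bu^\star - \hat{\bu}_{n_j}} + \sup_{\bu\in\cK}\norm{F_{n_j}(\bu) - F(\bu)} + \norm{F(\hat{\bu}_{n_j}) - F(\bu^\star)}. \]
As $j \to \infty$, the first term vanishes by construction, the second by the uniform convergence of Lemma \ref{lem:fix_conv} on the compact set $\cK$, and the third by continuity of $F$ together with $\hat{\bu}_{n_j}\to\bu^\star$. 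Hence $\bu^\star = F(\bu^\star)$, so $\bu^\star$ solves the limiting fixed-point equations, and by uniqueness (Assumption \ref{assump:bounded_unique}) we conclude $\bu^\star = \hat{\bu}$. Since every convergent subsequence of the bounded sequence $(\hat{\bu}_n)$ thus shares the same limit $\hat{\bu}$, the full sequence converges to $\hat{\bu}$, which is the claim.

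The only genuinely nontrivial input is the \emph{uniform} — not merely pointwise — convergence $F_n \to F$, which is precisely what Lemma \ref{lem:fix_conv} supplies via the equi-Lipschitz property of the coordinate maps $\Phi_j$; everything else is the routine compactness-plus-uniqueness scheme. I note that the Jacobian invertibility of $I - \tfrac{dF_n}{d\bu}$ in Assumption \ref{assump:bounded_unique} is not strictly needed for this convergence statement itself — it serves to guarantee well-posedness and the uniqueness that the argument invokes — although one could alternatively run a more quantitative proof, applying the implicit function theorem to track the displacement of the fixed point directly as $F_n$ is deformed into $F$.
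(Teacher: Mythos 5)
Your proposal is correct and follows essentially the same route as the paper's own proof: Bolzano--Weierstrass on the bounded sequence $\hat{\bu}_n$, a triangle-inequality argument combining the uniform convergence $F_n \to F$ from Lemma \ref{lem:fix_conv} with the continuity of $F$ to show any subsequential limit is a fixed point of $F$, and then uniqueness from Assumption \ref{assump:bounded_unique} to identify it with $\hat{\bu}$. Your explicit justification that $F$ inherits the equi-Lipschitz constants (hence is continuous) and your remark that the Jacobian invertibility is not needed for this particular step are minor refinements of the same argument.
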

\begin{proof}
    By assumption, $\hat{\bu}_n$ are bounded. Therefore, by the Bolzano–Weierstrass theorem, there exists a convergent subsequence. Let $\hat{\bu}_{n_j}$ be any such subsequence with corresponding limit $\tilde{\bu}$. 
    We have:
    \begin{align*}
        \norm{F(\tilde{\bu})-\tilde{\bu}} &\leq \norm{F(\tilde{\bu})-F(\hat{\bu}_{n_j})}+\norm{F(\hat{\bu}_{n_j})-F_{n_j}(\hat{\bu}_{n_j})}.
    \end{align*}
By uniform convergence of $F_n$ to $F$ (Lemma \ref{lem:fix_conv}, we have that $\norm{F(\hat{\bu}_{n_j})-F_{n_j}(\hat{\bu}_{n_j})} \rightarrow 0$ while $\norm{F(\tilde{\bu})-F(\hat{\bu}_{n_j})} \rightarrow 0$ from the convergence of $\hat{\bu}_{n_j}$ to $\tilde{\bu}$ and the continuity of $F$.
Therefore, we must have $F(\tilde{\bu})-\tilde{\bu} = 0$ and thus $\tilde{\bu} = \hat{\bu}$. Therefore, any convergence subsequence of $\hat{\bu}_n$ converges to $\hat{\bu}$. Since $ \hat{\bu}_n$ are bounded, this implies that  $\lim_{n \rightarrow \infty} \hat{\bu}_n =\hat{\bu}$.
\end{proof}

Now, let $\Phi$ be a twice differentiable test function  with a Hessian having bounded spectral norm. We define:
\begin{equation}\label{eq:h_sum}
    h_d(\bW)=\frac{1}{d}\sum_{i=1}^d \Phi(\{W_{c, i}\}_{c \in \cC},\{\mu_{c, i}\}_{c \in \cC},\{\sigma_{c, i}\}_{c \in \cC}).
\end{equation}
Let $\hat{\bu}(s)$ denote the solution to the limiting equation $\bu=F_s(\bu)$ defined in Lemma \ref{lem:fix_conv} for regularization
\[ r_s(\bW) = \frac\lambda n r_d(\bW) + s h_d (\bW) \]
By Assumption \ref{assump:bounded_unique}, $\bI-\frac{dF_s}{d\bu}$ is invertible in a neighbourhood of $0$. Therefore, by implicit function theorem and uniqueness of the fixed points, $\hat{\bu}(s)$, is a continuously differentiable function of $s$.

Using equation \eqref{eq:g_prox}, we obtain that $G$ is a differentiable function of $s$. 
We now consider the perturbed training loss:
\begin{equation}
\hR(\bW,\bbb,s)=\frac{1}{n}\sum_{i=1}^n\ell\left(\frac{\bW\bx_i}{\sqrt d}+\bbb, \by_i\right) + r_s(\bW).
\end{equation}
By the boundedness of the Hessian of $\Phi$, $\hR(\bW,\bbb,s)$ satisfies the assumptions of strict convexity, coercivity for small enough $s$.
We first note from \eqref{eq:pseudo_conv} and Theorem 2 in \cite{loureiro2021learning_gm}, the expected training error converges to the following limit:
\begin{equation}\label{eq:loss_conv}
\Eb{\bX}{\frac{1}{n}\sum_{i=1}^n\ell\left(\frac{\bW\bx_i}{\sqrt d}+\bbb, \by_i\right)} \xrightarrow[n,d \to +\infty]{} \sum_{k=1}^K\rho_k\mathbb E_{\bxi}[\ell(\be_k,\bh_k)].
\end{equation}
Similar to equation \eqref{eq:g_prox}, we have that $\bh_k$ are continuous, differentiable functions of $\hat{\bu}(s)$.

From Assumption \ref{assump:reg} and the form of the perturbation \ref{eq:h_sum}, we further have that $\lambda r(\bW) + s h(\bW)$ is a pseudo-Lipschitz function of $\bW$ of finite order.
Therefore, Equation \eqref{eq:pseudo_conv} gives:
\begin{equation}
    \Eb{\bX}{\frac{1}{d}\lambda r(\bW^*) + s h(\bW^*)}  \xrightarrow[n,d \to +\infty]{} \Eb{\bXi}{\frac{1}{d}\lambda r_d(\bG_n) + s h_d(\bG_n)}.
\end{equation}
Define:
\begin{align}
   q_n(s,\bu)&= \frac{1}{d}\sum_{i=1}^d\Eb{\bXi}{\psi_r(g(\{\mu_{c, i}\}_{c \in \cC},\{\sigma_{c, i}\}_{c \in \cC}, \bSigma^k_i,\xi_i,\bu_n))}\\&+s\frac{1}{d}\sum_{i=1}^d\Eb{\bXi}{\Phi(g(\{\mu_{c, i}\}_{c \in \cC},\{\sigma_{c, i}\}_{c \in \cC}, \bSigma^k_i,\xi_i,\bu),\{\mu_{c, i}\}_{c \in \cC},\{\sigma_{c, i}\}_{c \in \cC})}.
\end{align}
From assumption \ref{assump:reg} and equation \ref{eq:g_prox}, we have:
\begin{equation}
    \Eb{\bXi}{\frac{1}{d}\lambda r_d(\bG_n) + s h_d(\bG_n)}=q_n(s, \hat{\bu}_n(s)).
\end{equation}
Similar to the proof of Lemma \ref{lem:fix_conv}, we obtain that $q_n(s,\bu)$ converges uniformly to $q(s,\bu)$ given by the corresponding expectation w.r.t the limiting empirical measure:
\begin{align}
   q(s,\bu)= \Eb{p(\bm\sigma, \bm\mu)}{\Eb{\bXi}{\Phi(g(\{\mu_{c, i}\}_{c \in \cC},\{\sigma_{c, i}\}_{c \in \cC}, \bSigma^k_i,\xi_i,\bu),\{\mu_{c, i}\}_{c \in \cC},\{\sigma_{c, i}\}_{c \in \cC})}}.
\end{align}
Due to the uniform convergence, we further have:
\begin{equation}
    \lim_{n \rightarrow \infty} q_n(s,\hat{\bu_n}(s)) = q(s,\hat{\bu}(s)).
\end{equation}
We conclude that:
\begin{equation}\label{eq:diff_lim_fin}
    \Eb{\bX}{\frac{1}{n}\sum_{i=1}^n\ell\left(\frac{\bW^*\bx_i}{\sqrt d}+\bbb^*, \by_i\right)+\frac{1}{d}\lambda r(\bW^*) + s h(\bW^*)} \rightarrow \sum_{k=1}^K\rho_k\mathbb E_{\bxi}[\ell(\be_k,\bh_k)]+q(s,\hat{\bu}(s)).
\end{equation}

Since the RHS is a differentiable function in $s$, Assumption \ref{assump:diff_lim} is satisfied for the perturbation $h(\bW)$. 
Due to the coercivity of $\ell(\by,\bullet\bX)+r(\bullet)$, there exists a sequence of fixed compact subsets containing the minimizers $\bW^*$ with high probability as $n \rightarrow \infty$ (see Lemma 8 in \cite{loureiro2021learning}).
Furthermore, since the input distribution is given by a mixture of gaussians with bounded means, Assumption \ref{assump:emp_conv} is satisfied for any such sequence of constraint sets.
Therefore, the validity of assumption \ref{assump:diff_lim} through Equation \ref{assump:diff_lim} allows the applicability of 
Theorem \ref{thm:overlap_universality} for the statistic $h_d(\bW)$. Through standard approximation technniques or the Stone–Weierstrass theorem, the restriction of differentiability and bounded Hessian of $\Phi$ can be removed. This completes the proof of Theorem \ref{thm:weak_conv} for general bounded Lipschitz $\Phi$.

\section{Assumptions on the target function}
\label{sec:app:target}
In this section, we discuss possible generalizations of the assumptions on the target function. In \eqref{eq:def:teacher}, we assume a target function depending on a small number of linear projections in the input space, along with the class labels. However, when the inputs are generated through feature maps $\bm x =\psi(\bz)$, one may instead consider target functions depending directly on the latent vectors $\bz$. This was the setup considered in \cite{hu2020universality} for random feature maps. For mixture models considered in our work, one may assume:

\begin{align}
\label{eq:def:teacher_rf}
y_{i}(\bX) = \eta(\bm \Theta_{\star}^{\top} \bm z_i, \eps_i, c_i).
\end{align}

We conjecture that our results can be generalized to the above setup through the use of the following stronger assumption:

\begin{assump}
\label{assump:joint_clt}
For any Lipschitz function $\varphi: \dR \to \dR$, 
\begin{equation}\label{eq:app:feature_clt}
    \lim_{n, p \to \infty} \sup_{\bTheta_1\in\cS^{\bm x}_p,\bTheta_2\in\cS^{\bm z}_d} 
    \left| 
    \Ea{\varphi(\bTheta_1^\top \bm x,\bTheta_2^\top \bm z)\,\big|\,c_{\bm x}=c} - 
   \Ea{\varphi(\bTheta_1^\top\bm g,\bTheta_2^\top \bm z)\,\big|\,c_{\bm g}=c}
   \right| = 0,\quad \forall c \in \cC.
\end{equation}
Here $S^{\bm x}_p$ is the constraint set for the training parameters $\bTheta_1$ while $\cS^{\bm z}_d$ denotes a suitable constraint set on $\R^d$ where $d$ denotes the dimension of the latent vectors. Under the above assumption
\end{assump}

Such an assumption has been discussed in \cite{goldt2019modelling} under the term ``Hidden Manifold Model", and was proven in \cite{hu2020universality} for random feature maps acting on Gaussian noise. 

\section{One-dimensional gaussian approximation}
\label{app:1d_clt_arguments}

Although Theorem \ref{thm:montanari_universality} is a powerful result, it still relies on very strong assumptions. In particular, given a distribution $\nu$ for the inputs $\bm x_i$, characterizing the set of vectors $\bm \theta$ such that Assumption \ref{assump:clt} holds is in general a difficult task.

\paragraph{Rigorous results} When the entries of $\bm x$ are i.i.d subgaussian, a classical application of the Lindeberg method \citep{lindeberg_1922_eine} shows that Assumptions \ref{assump:concentration} and \ref{assump:clt} are satisfied with
\[ \cS_p = \Set*{\bm \theta \in \dR^p \given \norm{\bm \theta}_\infty  = o_p(1)}. \]
More recently, this result (often used under the name ``Gaussian Equivalence Theorem'') was extended to general feature models with approximate orthogonality constraints \citep{hu2020universality, goldt2020gaussian}, for the same choice of $\cS_p$. \cite{montanari2022universality} also provides a central limit theorem result for the Neural Tangent Kernel of \citep{jacot2018neural}, for a more convoluted parameter set $\cS_p$. While these papers provide a strong basis for the one-dimensional CLT, those rigorous results only concern (so far) a very restricted set of distributions.

\paragraph{Concentration of the norm} Another, more informal line of work originating from \cite{seddik_2020_random}, argues that most distributions found in the real world satisfy some form of the central limit theorem. The starting point of this analysis is the following theorem, adapted from \cite{bobkov_concentration_2003}:
\begin{theorem}[Corollary 2.5 from \cite{bobkov_concentration_2003}]\label{thm:almost_everywhere_clt}
Let $\bm x \in\dR^p$ be a random variable, with $\E*{\bm x\bm x^\top} = \bm I_p$, and $\eta_p$ the smallest positive number such that
\begin{equation}\label{eq:norm_concentration}
\Pb*{\left| \frac{\norm{\bm x}_2}{\sqrt{p}} - 1 \right| \geq \eta_p} \leq \eta_p.
\end{equation}
Then for any $\delta > 0$, there exists a subset $\cS_p$ of the $p$-sphere $\dS^{p-1}$ of measure at least $4p^{3/8}e^{-c p\delta^4}$, such that
\[ \sup_{\bm \theta\in \cS_p} \sup_{t\in \dR} \left|\Pb{\bm \theta^\top \bm x \geq t} - \Phi(t) \right| \leq \delta + 4\eta_p,\]
where $\Phi$ is the characteristic function of a standard Gaussian, and $c$ is a universal constant.
\end{theorem}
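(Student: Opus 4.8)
This is a quantitative instance of the classical ``thin shell implies Gaussian marginals'' phenomenon (Sudakov, Diaconis--Freedman), and I would prove it by the characteristic-function method coupled with concentration of measure on the sphere. Throughout I read the hypothesis in its mean-zero form, $\Ea{\bm x} = \bm 0$ and $\Ea{\bm x\bm x^\top} = \bm I_p$ (as in Bobkov's normalized-sums setting); this is genuinely needed, since a nonzero mean of size $\norm{\Ea{\bm x}}_2 \sim \sqrt p$ would make the typical marginal a degenerate or shifted law rather than the standard Gaussian. For $\bm\theta$ uniform on $\dS^{p-1}$, set $\phi_{\bm\theta}(\xi) = \Ea{e^{i\xi\,\bm\theta^\top \bm x}}$. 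The plan is to show that $\phi_{\bm\theta}(\xi)$ is uniformly close to $e^{-\xi^2/2}$ on a band $|\xi| \le T$ for all $\bm\theta$ outside a set of the stated measure, and then to convert this into the Kolmogorov bound on $\sup_t|\Pb{\bm\theta^\top\bm x\ge t} - \Phi(t)|$ via an Esseen-type smoothing inequality.

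First I would compute the spherical average. By rotational invariance of the law of $\bm\theta$, for fixed $\bm x$ one has $\bm\theta^\top\bm x \overset{d}{=} \norm{\bm x}_2\,\theta_1$, so $\Eb{\bm\theta}{\phi_{\bm\theta}(\xi)} = \Eb{\bm x}{\omega_p(\xi\norm{\bm x}_2)}$, where $\omega_p(r) = \Eb{\bm\theta}{e^{ir\theta_1}}$ is the characteristic function of one coordinate of the uniform sphere. Using the standard estimate $\omega_p(r) = e^{-r^2/(2p)}(1 + o(1))$ valid for $r = o(p)$, together with the thin-shell hypothesis \eqref{eq:norm_concentration}, I would obtain $\Eb{\bm\theta}{\phi_{\bm\theta}(\xi)} = e^{-\xi^2/2} + O(\xi^2\eta_p)$; the deviation of $\norm{\bm x}_2^2/p$ from $1$, controlled by $\eta_p$, is exactly what governs the discrepancy inside the exponent.

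The concentration step is where the normalization $\Ea{\bm x\bm x^\top} = \bm I_p$ does its work. For fixed $\xi$, the map $\bm\theta \mapsto \phi_{\bm\theta}(\xi)$ is $O(|\xi|)$-Lipschitz on $\dS^{p-1}$: indeed $|\phi_{\bm\theta}(\xi) - \phi_{\bm\theta'}(\xi)| \le |\xi|\,\Ea{|(\bm\theta - \bm\theta')^\top \bm x|} \le |\xi|\sqrt{\Ea{((\bm\theta-\bm\theta')^\top\bm x)^2}} = |\xi|\,\norm{\bm\theta-\bm\theta'}_2$, the last equality using the identity covariance. Lévy's concentration of measure then gives $\dP_{\bm\theta}\!\left(|\phi_{\bm\theta}(\xi) - \Eb{\bm\theta}{\phi_{\bm\theta}(\xi)}| \ge \epsilon\right) \le 2\exp(-c\,p\,\epsilon^2/\xi^2)$. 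To make this uniform I would place a finite net over $[-T, T]$, union bound over it, and fill the gaps using the $O(|\xi|)$-Lipschitz dependence of $\phi_{\bm\theta}$ in $\xi$; combined with the previous paragraph this pins $\phi_{\bm\theta}(\xi)$ within $\epsilon + O(\xi^2\eta_p)$ of $e^{-\xi^2/2}$ for all $\bm\theta$ off an exponentially small set. Finally, Esseen's inequality bounds $\sup_t|\Pb{\bm\theta^\top\bm x \ge t} - \Phi(t)|$ by $\int_{-T}^T|\phi_{\bm\theta}(\xi) - e^{-\xi^2/2}|\,|\xi|^{-1}\,d\xi + C/T$, which yields the advertised $\delta + 4\eta_p$ once the parameters are tuned, the $4\eta_p$ being the irreducible contribution of the radial fluctuation.

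The main obstacle is precisely this parameter tuning, which is what produces the exponent $p\,\delta^4$ in the measure estimate. The smoothing inequality forces the cutoff $T \sim \delta^{-1}$ (so that the tail term $C/T \lesssim \delta$), while the $|\xi|^{-1}$-weighted integral up to $T$ demands per-frequency accuracy $\epsilon \sim \delta$ (up to logarithms). But the Lipschitz constant of $\bm\theta\mapsto\phi_{\bm\theta}(\xi)$ grows with $|\xi|$, so the concentration is weakest at the top of the band: there the exceptional probability is $\exp(-c\,p\,\epsilon^2/\xi^2) = \exp(-c\,p\,\delta^2/\delta^{-2}) = \exp(-c\,p\,\delta^4)$, which is the origin of the stated rate. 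Making the per-frequency accuracy, the cutoff, the net cardinality, and the gap-filling all cooperate --- so that the accumulated error stays $O(\delta)$ while the union bound costs only the polynomial prefactor $p^{3/8}$ appearing in the statement --- is the delicate technical heart of the argument.
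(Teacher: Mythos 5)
This theorem is not proved in the paper at all: it is imported verbatim (with two transcription slips --- the measure of $\cS_p$ should read ``at least $1-4p^{3/8}e^{-cp\delta^4}$'', and $\Phi$ is the Gaussian \emph{distribution} function, not its characteristic function) from Corollary~2.5 of Bobkov's 2003 paper, so there is no in-paper argument to compare against. Judged on its own terms, your sketch is a legitimate route to a statement of this type: the reduction of the spherical average to $\Eb{\bm x}{\omega_p(\xi\norm{\bm x}_2)}$, the $|\xi|$-Lipschitz bound $|\phi_{\bm\theta}(\xi)-\phi_{\bm\theta'}(\xi)|\le|\xi|\,\norm{\bm\theta-\bm\theta'}_2$ via the identity second moment, L\'evy concentration at rate $\exp(-cp\epsilon^2/\xi^2)$, and the Esseen conversion with cutoff $T\sim\delta^{-1}$ are all sound, and you correctly locate the origin of the exponent $p\delta^4$ at the top of the frequency band. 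This differs from Bobkov's actual method, which works directly at the level of distribution functions: he first proves concentration of $F_{\bm\theta}(t)=\Pb{\bm\theta^\top\bm x\le t}$ around its spherical mean $F=\Eb{\bm\theta}{F_{\bm\theta}}$ (nontrivial, since $F_{\bm\theta}(t)$ is not Lipschitz in $\bm\theta$ --- a difficulty your characteristic-function detour cleanly sidesteps), and then separately compares $F$ to $\Phi$ using the thin-shell condition, which is where the clean additive term $4\eta_p$ comes from.

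Two caveats on your version. First, the bookkeeping of the radial term does not reproduce $4\eta_p$: on the bad event of \eqref{eq:norm_concentration} you can only bound $|\Eb{\bm\theta}{\phi_{\bm\theta}(\xi)}-e^{-\xi^2/2}|$ by $O(\eta_p)$ pointwise in $\xi$, and feeding that into Esseen's weighted integral over $1\le|\xi|\le T$ costs a factor $\log T\sim\log(1/\delta)$, so your route yields $\delta+C\eta_p\log(1/\delta)$ rather than $\delta+4\eta_p$ (the good-event contribution $O(\xi^2\eta_p e^{-c\xi^2})$ is fine and integrates to $O(\eta_p)$). This is a genuine, if mild, quantitative loss relative to the stated bound. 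Second, your motivation for imposing mean zero rests on a false premise: since $\Cov(\bm x)=\Ea{\bm x\bm x^\top}-\bmu\bmu^\top\succeq 0$, the hypothesis $\Ea{\bm x\bm x^\top}=\bm I_p$ already forces $\norm{\bmu}_2\le 1$, so $\bm\theta^\top\bmu$ is $O(p^{-1/2})$ for all but an exponentially small set of directions and no separate centering assumption is needed; the scenario $\norm{\Ea{\bm x}}_2\sim\sqrt p$ you guard against cannot occur.
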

If both $\delta$ and $\eta_p$ are $o(1)$, Theorem \ref{thm:almost_everywhere_clt} implies that Assumption \ref{assump:clt} is satisfied for any compact subset $\cS'_p \subseteq \cS_p$. This suggests that the norm concentration property of \eqref{eq:norm_concentration} is a convenient proxy for one-dimensional CLTs. However, the proof of this theorem uses isoperimetric inequalities, and is thus non-constructive; as a result, characterizing precisely the set $\cS_p$ remains an open and challenging mathematical problem.

\paragraph{Concentrated vectors} In \cite{seddik_2020_random}, the authors consider the concept of \emph{concentrated} random variables, as defined in \cite{ledoux_2001_concentration}:
\begin{definition}\label{def:concentrated}
Let $\bm x\in \dR^p$ be a random vector. $\bm x$ is called (exponentially) concentrated if there exists two constants $C, c$ such that for any 1-Lipschitz function $f: \dR^p \to \dR$, we have
\[ \Pb{\left|f(\bm x) - \E{f(\bm x)} \right|   \geq t} \leq C e^{-ct^2}.\]
\end{definition}
Since the norm function is $1$-Lipschitz, it can be shown that any concentrated isotropic vector $\bm x$ satisfies \eqref{eq:norm_concentration}, with
\[ \eta_p \propto \left(\frac{\log(p)}{p}\right)^{1/2} \]
The converse is obviously not true; an exponential random vector still has $\eta_p \to 0$, but is not concentrated. However, even if it is stronger that \eqref{eq:norm_concentration}, the concept of concentrated vectors has two important properties:
\begin{enumerate}
    \item a standard Gaussian vector $\bm x \sim \cN(\bm 0, \bm I_p)$ satisfies Definition \ref{def:concentrated} with constants $C, c$ independent from $p$,
    \item if $\bm x \in \dR^p$ is a concentrated vector with constants $C, c$ and $\Psi: \dR^p \to \dR^{q}$ is an $L$-Lipschitz function, then $\Psi(\bm x)$ is also a concentrated vector, with constants only depending on $c, C$ and $L$.
\end{enumerate}

\paragraph{Towards real-world datasets} The real-world data considered in machine learning is often composed of very high-dimensional inputs, corresponding to $p \gg 1$ in our setting. However, it is generally accepted that this data actually lies on a low-dimensional manifold of dimension $d_0$: this is the idea behind many dimensionality reduction techniques, from PCA \citep{pearson_1901_lines} to autoencoders \citep{kramer_1991_nonlinear}. Another, more recent line of work (see e.g. \cite{facco2017estimating}) studies the estimation of the latent dimension $d_0$; results for the MNIST dataset ($p=784$) yield $d_0 \approx 15$, while CIFAR-10 ($p=3072$) has estimated intrinsic dimension $d_0\approx 35$ \citep{spigler2019asymptotic}.

Following this heuristic, the most widely used method to model realistic data is to learn a map $f: \dR^{d_0} \to \dR^p$, usually through a deep neural network, and then generate the $x_i$ according to
\begin{equation}
    \bm x = f(\zz) \quad \text{with} \quad \zz\sim \cN(\bm 0, \bm I_{d_0})
\end{equation} 
Examples of functions $f$ include GANs \citep{goodfellow2014generative}, variational auto-encoders \citep{kingma2013auto}, or normalizing flows \citep{rezende_2015_variational}. This ansatz has been studied theoretically, and the results compared with real-world datasets, in \cite{goldt2019modelling, loureiro2021learning}; the results indicate significant agreement between generated inputs and actual data.

Finally, we argue that for a large class of generative networks, the learned function $f$ is actually Lipschitz, with a bounded constant. This is even often a design choice; indeed, theoretical results such as \cite{bartlett_2017_spectrally} imply that a smaller Lipschitz constant improve the generalization capabilities of a network, or its numerical stability \citep{behrmann_2021_understanding}. As a result, regularizations aimed at controlling the Lipschitz properties of a network are a common occurrence; see e.g. \cite{miyato_2018_spectral} for the spectral regularization of GANs. This indicates that concentrated vectors are indeed a good approximation for real-world data. 
\section{Details on the numerical simulations}
\label{sec:app:numerics}
In this appendix we expand on how Fig.~\ref{fig:experiments} was generated. This closely follows the pipeline illustrated in Fig.~\ref{fig:cartoon_theorem1}.
\paragraph{Step 1: Training of the cGAN ---} The first step consists on training a cGAN on the real data set. For Fig.~\ref{fig:experiments}, we have used a PyTorch \citep{NEURIPS2019_9015} implementation of the architecture in \cite{Chen2016} publicly available at the \href{https://github.com/znxlwm/pytorch-generative-model-collections}{pytorch-generative-model-collections} repository. The cGAN was trained on the fashion-MNIST dataset \cite{xiao2017} following the default procedure in the repository: i.e. training for 50 epochs and batch size $64$ using Adam with learning rate $0.0002$ and $(\beta_{1},\beta_{2})=(0.5, 0.999)$ on the binary cross entropy (BCE) loss (equal for both generator and discriminator). The evolution of the training loss during training is given in Fig.~\ref{fig:app:loss}, and samples from the generator during different epochs are shown in Fig.~\ref{fig:app:samples}. 
\begin{figure}[t]
    \centering
    \includegraphics[width=0.6\textwidth]{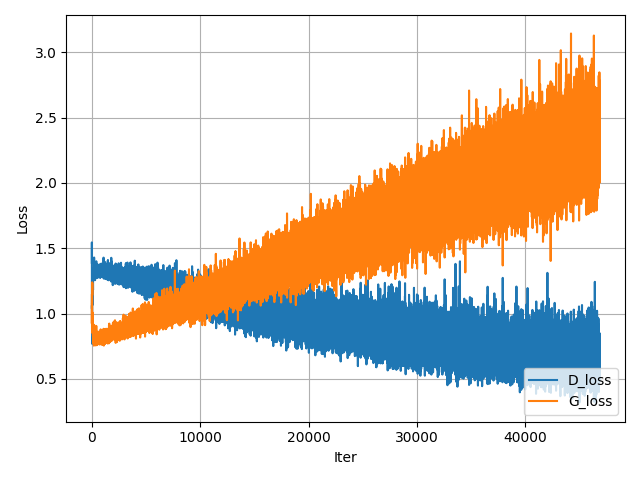}
    \caption{Evolution of the binary cross entropy training loss for the generator (orange) and discriminator (blue) during training.}
    \label{fig:app:loss}
\end{figure}

\begin{figure}[t]
    \centering
    \includegraphics[width=0.3\textwidth]{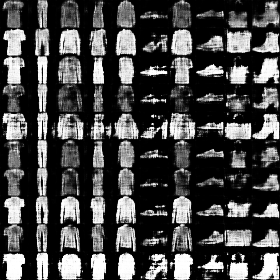}
    \includegraphics[width=0.3\textwidth]{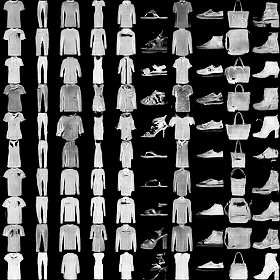}
    \includegraphics[width=0.3\textwidth]{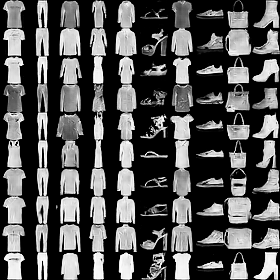}
    \caption{Fake fashion-MNIST Samples from the cGAN generator at the end of epoch 1 (left), 25 (middle) and 50 (right).}
    \label{fig:app:samples}
\end{figure}
\paragraph{Step 2: Evaluating the class means and covariances ---} With the trained cGAN in hands, we can generate as many fresh samples as needed for our experiments. Moreover, a feature map can be easily added on the top of the cGAN architecture, as illustrated in Fig. \ref{fig:cartoon_theorem1}. For Fig.~\ref{fig:experiments}, we have added a random feature map \citep{rahimi2007random} $\bx\mapsto \tanh(\mat{F}\bx)$ with projection matrix $\mat{F}\in\mathbb{R}^{1176\times 784}$ with entries $\mat{F}_{ij}\sim\mathcal{N}(0,\sfrac{1}{d})$. In order to compare the performance of a model trained on cGAN+RF samples vs. the equivalent Gaussian mixture model, we need to compute the class-wise means and covariances $(\bmu_{c}, \bSigma_{c})$. For Fig.~\ref{fig:experiments}, this was done with a standard Monte Carlo scheme over $10^{6}$ samples. 
\paragraph{Step 3: Learning curves ---} The last step consists of computing the curves for the test error of logistic and ridge regression trained on the cGAN+RF features. Each point in Fig.~\ref{fig:experiments} corresponds to a fixed sample complexity $\alpha = \sfrac{n}{p}$. For each $\alpha$, we generate fresh $n=\alpha\times p$ training features either from the cGAN+RF model (blue points) or from the equivalent Gaussian mixture model (red points). For the binary classification task, we split the samples over even vs. odds class labels. The SciPy \citep{2020SciPy-NMeth} implementation of both ridge and logistic regression were used to train a classifier on the training data, from which both training error and test error were computed, using another batch of fresh samples for the latter. Finally, to reduce finite-size effects this procedure was repeated over 10 different seeds, with the average and standard deviation reported in Fig.~\ref{fig:experiments}.

\end{document}